\numberwithin{equation}{section}
\numberwithin{figure}{section}
\theoremstyle{plain}
\newtheorem{thm}{\protect\theoremname}[section]
  \theoremstyle{plain}
  \newtheorem{conjecture}[thm]{\protect\conjecturename}
  \theoremstyle{plain}
  \newtheorem{prop}[thm]{\protect\propositionname}
  \theoremstyle{plain}
  \newtheorem{lem}[thm]{\protect\lemmaname}
  \providecommand{\conjecturename}{Conjecture}
  \providecommand{\lemmaname}{Lemma}
  \providecommand{\propositionname}{Proposition}
\providecommand{\theoremname}{Theorem}
\begin{document}

\title{The Distribution of $p$-Torsion in Degree $p$ Cyclic Fields}

\author{Jack Klys}
\begin{abstract}
We compute all the moments of the $p$-torsion in the first step of
a filtration of the class group defined by Gerth \cite{gerthprank}
for cyclic fields of degree $p$, unconditionally for $p=3$ and under
GRH in general. We show that it satisfies a distribution which Gerth
conjectured as an extension of the Cohen-Lenstra-Martinet conjectures.
In the $p=3$ case this gives the distribution of the $3$-torsion
of the class group modulo the Galois invariant part. We follow the
strategy used by Fouvry and Kl$\ddot{\text{u}}$ners in their proof
of the distribution of the $4$-torsion in quadratic fields \cite{fk1}. 
\end{abstract}

\maketitle

\section{Introduction}

Let $K$ be a number field of degree $n$. Let $Cl_{K}$ denote the
class group and $Cl_{K,p}$ denote the $p$-part. Let $S$ be the
set of finite abelian $p$-groups. We are interesed in the question:
what is the probability of any $A\in S$ occuring as $Cl_{K,p}$ for
$K$ of degree $n$? The Cohen-Lenstra heuristics propose an answer
to this question in the form of a probability distribution on $S$.

We make the question more precise as follows. Let $D_{K}$ denote
the discriminant of $K$. For any $X$ define 
\[
S_{X}^{\pm}\left(A\right)=\frac{\left|\left\{ K\mid0<\pm D_{K}<X,Cl{}_{K,p}\cong A\right\} \right|}{\sum_{K,0<\pm D_{K}<X}1}.
\]
 The probability of $A$ occuring as $Cl_{K,p}$ is $\lim_{X\longrightarrow\infty}S_{X}\left(A\right)$.
In general this is not known to exist. Cohen and Lenstra conjectured
\cite{cohenlenstra} that it does and proposed a distribution on $S$
which should equal this quantity. For $s\in\mathbb{Z}_{\ge1}\cup\left\{ \infty\right\} $
let
\[
\eta_{s}\left(p\right)=\prod_{i=1}^{s}\left(1-1/p^{i}\right).
\]
One can show \cite{cohenlenstra,hall} that  
\[
\sum_{G\in S}\frac{1}{\left|\mathrm{Aut}G\right|}=\frac{1}{\eta_{\infty}\left(p\right)}<\infty.
\]
Then for any $A\in S$ and $u\ge0$ define 
\[
\mu_{u}\left(A\right)=\frac{\eta_{\infty}\left(p\right)}{\left|\mathrm{Aut}A\right|\left|A\right|^{u}}.
\]
This defines a probability measure on $S$, called the Cohen-Lenstra
distribution. They originally considered the case $n=2$, $p\neq2$
and considered complex quadratic and real quadratic fields seperately.
\begin{conjecture}[Cohen-Lenstra]
For $A\in S$

\[
\mu_{0}\left(A\right)=\lim_{X\longrightarrow\infty}S_{X}^{-}\left(A\right)
\]
\[
\mu_{1}\left(A\right)=\lim_{X\longrightarrow\infty}S_{X}^{+}\left(A\right).
\]
\end{conjecture}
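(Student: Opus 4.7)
The plan is the method of moments: for each finite abelian $p$-group $A$ and each integer $k\geq 0$, compute the asymptotic
\[
M_k(A,X) = \frac{1}{\#\{K : 0 < \pm D_K < X\}} \sum_{0 < \pm D_K < X} \#\mathrm{Sur}(Cl_{K,p}, A^k),
\]
show that it converges as $X\to\infty$ to $\sum_{G \in S} \mu_u(G) \cdot \#\mathrm{Sur}(G, A^k)$ (with $u=0$ for imaginary, $u=1$ for real quadratic), and then invoke the fact---provable directly from the explicit product formula for $\mu_u$---that these moments decay fast enough in $k$ to determine a unique probability measure on $S$. Once moments match, $S_X^{\pm}(A)$ is forced to converge to $\mu_u(A)$.

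The first concrete step is to parameterize surjections $Cl_{K,p}\twoheadrightarrow A^k$ by class field theory: they correspond to unramified abelian extensions $L/K$ with Galois group $A^k$. For $p=2$ and quadratic $K$, this reduces via Gauss's genus theory to counting certain multiquadratic extensions of $\mathbb{Q}$ together with splitting information at ramified primes. For general $p$ one passes instead to the cyclotomic cover $K\mathbb{Q}(\zeta_p)$ and uses Kummer theory to present the $A$-extensions by $p$-th root symbols, transferring the unramifiedness and Galois-equivariance data to congruence and splitting conditions on the Kummer generators.

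The second step is analytic: after parameterization, the numerator becomes a double sum $\sum_{a}\sum_{b} f(a,b)$ where $a$ indexes $K$ through its discriminant or a defining polynomial and $b$ indexes the extension $L/K$. One extracts the expected main term $\mu_u(A)\cdot\#\{K:|D_K|<X\}$ by orthogonality---typically a principal-character contribution in the inner sum over $b$---and bounds the error using character-sum technology (Polya--Vinogradov, the large sieve) together with effective prime-counting input, which is exactly where GRH enters outside of cases small enough to admit the Davenport--Heilbronn-type unconditional treatment mentioned for $p=3$.

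The main obstacle is twofold. First, even granted convergence of each individual moment, one needs uniformity in $A$ (and in $k$) to rule out mass escaping to infinity in $S$ and to legitimately invert the sequence of moments into the actual distribution; in practice this is handled by matching them to the Cohen--Lenstra moments, whose total mass $\sum_{G} |\mathrm{Aut}\,G|^{-1} = \eta_\infty(p)^{-1}$ is finite. Second, and much more seriously, the analytic step requires genuine cancellation in character sums over degree-$n$ fields twisted by ray class characters of essentially arbitrary conductor; the Fouvry--Kl\"uners treatment of $4$-torsion that this paper imports to the degree-$p$ cyclic setting already exhibits the delicate bilinear structure needed at the first nontrivial moment, and extending this to all $A$ with many generators is precisely where the full conjecture remains open.
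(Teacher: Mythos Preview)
The statement you are attempting to prove is labeled a \emph{Conjecture} in the paper, and the paper does not claim to prove it. Indeed, immediately after stating it the author writes that ``No cases of these conjectures are known in full strength.'' There is therefore no proof in the paper for your proposal to be compared against.

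Your outline is a reasonable sketch of how one \emph{hopes} the method of moments would eventually resolve the Cohen--Lenstra conjecture, and you yourself concede in the final sentence that the analytic step ``is precisely where the full conjecture remains open.'' That concession is the whole point: the moment computation you describe in your second step is not known for general $A$, even under GRH. The results of this paper, and of Fouvry--Kl\"uners before it, compute only the $(\mathbb{Z}/p\mathbb{Z})^k$-moments of a \emph{quotient} of the $p$-torsion (the $4$-rank for $p=2$, the group $\varphi(\ker\varphi^2)$ here), not the $A$-moments of the full $p$-part $Cl_{K,p}$ for arbitrary finite abelian $p$-groups $A$. Passing from $p$-torsion to $p^2$-torsion already requires fundamentally new ideas, and nothing in your parameterization or your invocation of Kummer theory over $K(\zeta_p)$ addresses this. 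So the gap is not a missing lemma or a technical oversight: the conjecture is open, and your proposal does not contain a new idea that would close it.
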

These conjectures were extended to higher degree number fields by
Cohen and Martinet \cite{cohenmartinet} again for $p\nmid n$. 

No cases of these conjectures are known in full strength, though there
has been much recent work on the subject. In the setting of number
fields there are results giving the average size of the class group
or sugroup therof. There is the classical result of Davenport-Heilbronn
\cite{davenportHeilbronn} and Datskovsky-Wright \cite{datskovskyWright}
for the average size of 3-torsion of quadratic fields. Bhargava-Varma
\cite{BhargavaVarma} compute the average 2 torsion of cubic fields.
There are also partial results for 8 and 16 torsion of quadratic fields
due to Milovic \cite{milovic16rank,milovic8rank} and Smith \cite{alexandersmith}.
Below we will discuss in more detail the work of Fouvry and Kl$\ddot{\text{u}}$ners
\cite{fk1} on 4 torsion of quadratic fields. There are also non-abelian
versions which have been studied by Alberts \cite{alberts} and Bhargava
\cite{BhargavaNonabelian}.

The conjectures have also been studied in the setting of function
fields which provides additional tools such as moduli schemes. Some
results here are the work of Ellenberg-Venkatesh-Westerland \cite{EVW},
Boston-Wood \cite{bostonwood} and Wood \cite{Wood}.

The original conjectures ignored the case when $p$ divides the degree
of the number fields. Gerth extended them to $p$-torsion in degree
$p$ cyclic fields by proposing a distribution for a certain subgroup
of $Cl_{K}\left[p\right]$ \cite{gerth4rankproofs,gerthprank,gerth4rankidea}.
For the case $p=2$ and $n=2$ Gerth proposed the conjectures should
hold in their original form, but with $Cl_{K}^{2}$ instead of $Cl_{K}$.
This was proved by Fouvry and Kl$\ddot{\text{u}}$ners \cite{fk1}.
To state their result, let $\mathrm{rk}_{4}\left(Cl_{K}\right)=\mathrm{rk}_{2}\left(Cl_{K}^{2}\right)$
and for any $k\in\mathbb{Z}_{\ge1}$ let

\[
M_{k}^{\pm}\left(2\right)=\lim_{X\longrightarrow\infty}\frac{\sum_{K,0<\pm D_{K}<X}2^{k\mathrm{rk}_{4}\left(Cl_{K}\right)}}{\sum_{K,0<\pm D_{K}<X}1}.
\]
 Define $N\left(k,p\right)$ to be the number of subspaces of $\mathbb{F}_{p}^{k}$.
\begin{thm}[Fouvry-Kl$\ddot{\text{u}}$ners]
\label{thm:fouvrykluners1}For every $k\in\mathbb{Z}_{\ge1}$

\begin{eqnarray*}
M_{k}^{-}\left(2\right) & = & N\left(k,2\right)\\
M_{k}^{+}\left(2\right) & = & N\left(k+1,2\right)-N\left(k,2\right).
\end{eqnarray*}
\end{thm}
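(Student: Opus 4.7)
The plan is to follow the approach of Fouvry and Klüners themselves, which combines a Rédei-type genus-theoretic expansion of the $4$-rank with analytic estimates for sums of real characters. The starting point is to interpret the moment algebraically. For $K=\mathbb{Q}(\sqrt{D})$, the Rédei--Reichardt theorem identifies $\mathrm{rk}_4(Cl_K)$ as the corank of an explicit matrix whose entries are Legendre symbols $(D_i/D_j)$ attached to the decomposition of $D$ into prime discriminants. Consequently $2^{k\,\mathrm{rk}_4(Cl_K)}$ is a counting function: it enumerates $k$-tuples of factorisations $D=D_1 D_2$ into coprime fundamental discriminants satisfying a joint splitting condition at each ramified prime. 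Summing over $D$ yields an identity of the form
\[
\sum_{0<\pm D<X}2^{k\,\mathrm{rk}_4(Cl_K)}
=\sum_{\text{factor. patterns}}\ \sum_{D_1,\ldots,D_{k+1}}\ \prod_{i<j}\Bigl(\frac{D_i}{D_j}\Bigr)^{\epsilon_{ij}},
\]
with exponents $\epsilon_{ij}\in\{0,1\}$ dictated by the tuple and with the $D_i$ ranging over squarefree, pairwise coprime integers with $\prod|D_i|<X$.

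Next I would swap the order of summation and analyse the inner character sum pattern by pattern. The tuples split into two classes. When the combined character in at least one variable $D_i$ is nontrivial, I would extract cancellation via oscillation estimates for real characters: Pólya--Vinogradov or Burgess handles short ranges, while Heath-Brown's large sieve for real characters dispatches the long ones. The delicate middle range requires the quadratic reciprocity swap, in which a Jacobi symbol $(D_i/D_j)$ is flipped to $(D_j/D_i)$ (up to a $\pm 1$ twist by signs and $2$-adic data), turning a silent variable into an oscillating one and making hidden cancellation visible.

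The surviving terms are exactly those in which every $\epsilon_{ij}$-exponent collapses, so that the Jacobi symbol product is identically $1$. Such tuples correspond bijectively to subspaces of $\mathbb{F}_2^k$ carved out by the induced linear relations among the chosen characters. Counting subspaces yields $N(k,2)$, matching $M_k^-(2)$ for imaginary discriminants. For real discriminants, genus theory for the narrow class group provides one additional $\mathbb{F}_2$-generator coming from the archimedean sign, inflating the ambient space by one dimension; the subspaces already contained in the previous $\mathbb{F}_2^k$ contribute nothing new, so the count becomes $N(k+1,2)-N(k,2)=M_k^+(2)$.

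The decisive obstacle is the analytic step. The Jacobi symbol sums are not separable: the variables $D_i$ are simultaneously required to be pairwise coprime and to multiply to a squarefree fundamental discriminant, so a naive product decomposition is unavailable. The genuinely hard part is controlling the near-diagonal configurations, where only some of the characters are trivial and partial cancellation must be converted into honest savings. This is the place where the full double-oscillation argument of \cite{fk1} is unavoidable: a careful choice of which Jacobi symbols to flip via reciprocity, combined with uniform bounds that survive the coprimality sieving, so that every nontrivial pattern contributes $o(1)$ to the normalised moment and only the subspace-indexed main term remains.
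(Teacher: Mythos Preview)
This theorem is not proved in the paper; it is quoted from \cite{fk1} as background and motivation. The paper's own work is the analogue for odd primes $p$ (Theorems \ref{thm:gerth 3 rank}--\ref{thm:mainthm}), carried out by adapting the Fouvry--Kl\"uners strategy with order-$p$ characters in place of quadratic ones. So there is no ``paper's own proof'' of this statement to compare against.

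That said, your sketch is a faithful outline of the argument in \cite{fk1}, and it also matches the architecture the present paper follows for general $p$: express the rank via local norm conditions, expand the resulting indicator into a multilinear Jacobi-symbol (here: power-residue) sum indexed by factorisations, partition into dyadic boxes, kill the linked configurations with large-sieve/double-oscillation bounds, and identify the surviving main term with a count of maximal unlinked index sets. One small imprecision: the main-term combinatorics in \cite{fk1} (and in Section~\ref{sec:Computing-the--th} here) is phrased in terms of maximal unlinked subsets of $\mathbb{F}_2^{2k}$, which are then shown to be translates of certain $k$-dimensional subspaces; the subspace count $N(k,2)$ vs.\ $N(k+1,2)-N(k,2)$ emerges from that enumeration rather than directly from ``subspaces of $\mathbb{F}_2^k$''. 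Your heuristic for the real/imaginary discrepancy via an extra archimedean generator is morally right but not quite how the bookkeeping runs in \cite{fk1}.
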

By a separate result \cite{fk2distribution} Fouvry and Kl$\ddot{\text{u}}$ners
deduce that these moments are enough to determine a distribution
\begin{thm}[Fouvry-Kl$\ddot{\text{u}}$ners]
The density of complex quadratic fields $K$ with $\mathrm{rk}_{4}\left(Cl_{K}\right)=s$
is 
\[
\frac{\eta_{\infty}\left(2\right)}{\eta_{s}^{2}\left(2\right)2^{s^{2}}}
\]
and the density of real quadratic fields with $\mathrm{rk}_{4}\left(Cl_{K}\right)=s$
is 
\[
\frac{\eta_{\infty}\left(2\right)}{\eta_{s}\left(2\right)\eta_{s+1}\left(2\right)2^{s\left(s+1\right)}}.
\]
 
\end{thm}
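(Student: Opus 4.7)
The plan is to invert the moment problem using M\"obius-style inversion on the lattice of $\mathbb{F}_2$-subspaces, then match the resulting densities to the claimed formulas via $q$-series identities.

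First, I would reinterpret $M_k^{\pm}(2)$ in terms of \emph{surjection moments}. Writing $V_K \cong \mathbb{F}_2^{\mathrm{rk}_4(Cl_K)}$, the identity
\[
2^{k\,\mathrm{rk}_4(Cl_K)} = |\mathrm{Hom}(V_K,\mathbb{F}_2^k)| = \sum_{j=0}^{k} \binom{k}{j}_2 |\mathrm{Surj}(V_K,\mathbb{F}_2^j)|
\]
holds, where $\binom{k}{j}_2$ denotes the Gaussian binomial. Since $N(k,2)=\sum_{j=0}^{k}\binom{k}{j}_2$ (and $N(k+1,2)-N(k,2)=\sum_{j=0}^{k}2^{k-j}\binom{k}{j-1}_2 \cdot \cdots$ by $q$-Pascal), inverting the Gaussian-binomial triangle translates Theorem~\ref{thm:fouvrykluners1} into the much cleaner form
\[
\widetilde M_j^{-}(2)=1,\qquad \widetilde M_j^{+}(2)=2^{j},\qquad j\ge 0,
\]
where $\widetilde M_j^{\pm}(2) := \lim_{X\to\infty}\bigl(\sum_{K}|\mathrm{Surj}(V_K,\mathbb{F}_2^j)|\bigr)/\#\{K : 0<\pm D_K<X\}$.

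Next I would establish moment-determinacy: show that these surjection moments force a unique probability distribution $(P_s^{\pm})_{s\ge 0}$ on $\mathbb{Z}_{\ge 0}$. Using the count $|\mathrm{Surj}(\mathbb{F}_2^s,\mathbb{F}_2^j)|=\prod_{i=0}^{j-1}(2^s-2^i)$ and formal M\"obius inversion over the subspace lattice, one obtains the candidate
\[
P_s^{\pm} \;=\; \frac{1}{|\mathrm{GL}_s(\mathbb{F}_2)|}\sum_{j\ge s}(-1)^{j-s}\,2^{\binom{j-s}{2}}\binom{j}{s}_2\,\widetilde M_j^{\pm}(2).
\]
This is the principal obstacle: both sides of the inversion involve terms growing like $2^{sj}$, and one must verify that the series converges absolutely, that $\sum_{s}P_s^{\pm}=1$, and that no other distribution matches the given moments. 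The saving grace is the decay $1/|\mathrm{GL}_s(\mathbb{F}_2)|\sim 2^{-s^2}\eta_s(2)^{-1}$, which, combined with the polynomial-in-$2^s$ structure of the surjection numbers, makes the inversion rigorous once one controls the tails uniformly in $j$.

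Finally, I would substitute $\widetilde M_j^{-}(2)=1$ (respectively $\widetilde M_j^{+}(2)=2^j$) into the inversion formula and verify it collapses to the stated closed forms. Equivalently, and more transparently, one can check directly that the claimed densities reproduce the surjection moments:
\[
\sum_{s\ge 0}\frac{\eta_\infty(2)}{\eta_s(2)^2\,2^{s^2}}\prod_{i=0}^{j-1}(2^s-2^i)=1,
\]
together with the analogous identity whose right-hand side is $2^j$ for the real case. Both reduce to classical $q$-Pochhammer identities (in particular, the $q$-binomial theorem) evaluated at $q=1/2$, after which the uniqueness from the previous step forces $P_s^{\pm}$ to be exactly the densities stated in the theorem.
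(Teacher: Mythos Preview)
The paper does not give its own proof of this statement; it is quoted as a result of Fouvry and Kl\"uners, deduced in \cite{fk2distribution} from the moment computation of Theorem~\ref{thm:fouvrykluners1}. Your outline is precisely the argument used there: pass from the $k$-th moments to surjection moments via Gaussian-binomial inversion, establish that these moments determine a unique probability distribution on $\mathbb{Z}_{\ge 0}$ (this is the genuine analytic content, and you correctly flag it as the principal obstacle), and then check that the stated densities reproduce those surjection moments through $q$-series identities. So methodologically there is nothing to compare---you are reconstructing the cited argument.

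There is, however, a concrete slip in your real-quadratic step that would cause the proof to fail as written. Taking $M_k^+(2)=N(k+1,2)-N(k,2)$ at face value does yield $\widetilde M_j^+(2)=2^{j}$, but the density $\eta_\infty(2)\big/\bigl(\eta_s(2)\,\eta_{s+1}(2)\,2^{s(s+1)}\bigr)$ in fact has surjection moments $2^{-j}$, not $2^{j}$: already for $j=1$ one computes $\sum_{s\ge 0} P_s^+(2^s-1)=\tfrac12$. This is the Cohen--Lenstra $u=1$ value $|A|^{-1}$, as one expects for a family of unit rank one. The mismatch comes from a misprint in the paper's statement of Theorem~\ref{thm:fouvrykluners1}: the correct real moment is $M_k^+(2)=2^{-k}\bigl(N(k+1,2)-N(k,2)\bigr)$, exactly parallel to the factor $p^{-k}$ that survives in the paper's own computation at the end of Section~\ref{sec:Computing-the--th}. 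With $\widetilde M_j^+(2)=2^{-j}$ your final $q$-Pochhammer verification goes through; with $2^{j}$ the ``analogous identity'' you invoke is simply false, and the argument does not close.
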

Gerth conjectured a distribution for a certain subgroup of $Cl_{K}\left[p\right]$
of cyclic $p$ fields for all $p$. To state it we first define some
notation. Let $K$ be a cyclic field of degree $p$ with Galois group
$G=\left\langle \sigma\right\rangle $. Let $\varphi=1-\sigma$ act
on $Cl_{K}\left[p\right]$. It can be shown there is a filtration
\[
Cl_{K}\left[p\right]^{G}=\ker\varphi\subseteq\ker\varphi^{2}\subseteq\cdots\subseteq\ker\varphi^{p-1}=Cl_{K}\left[p\right].
\]
Then Gerth conjectured a distribution for the $p$-rank of $\varphi\left(\ker\varphi^{2}\right)$.
Notice that for $p=3$ we have $\ker\varphi^{2}=Cl_{K}\left[3\right]$
and so the above filtration implies $\varphi\left(\ker\varphi^{2}\right)\cong Cl_{K}\left[3\right]/Cl_{K}\left[3\right]^{G}$.
We prove the following theorem which verifies Gerth's conjecture for
$p=3$:
\begin{thm}
\label{thm:gerth 3 rank}The density of cyclic cubic fields with $\mathrm{rk}_{3}\left(Cl_{K}\left[3\right]/Cl_{K}\left[3\right]^{G}\right)=s$
is 
\[
\frac{\eta_{\infty}\left(3\right)}{\eta_{s}\left(3\right)\eta_{s+1}\left(3\right)3^{s\left(s+1\right)}}.
\]
\end{thm}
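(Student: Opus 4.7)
The plan follows the Fouvry--Kl\"uners strategy from \cite{fk1} and \cite{fk2distribution}. Since a cyclic cubic field is always totally real, the density formula matches the shape of the real-quadratic density in Theorem \ref{thm:fouvrykluners1}. The first step is therefore to conjecture the moments
\[
M_k(3) := \lim_{X \to \infty} \frac{\sum_{K,\, 0 < D_K < X} 3^{k \cdot \mathrm{rk}_3(Cl_K[3]/Cl_K[3]^G)}}{\sum_{K,\, 0 < D_K < X} 1} = N(k+1,3) - N(k,3),
\]
where the sum runs over cyclic cubic fields, and to reduce Theorem \ref{thm:gerth 3 rank} to this identity via the moments-to-distribution result of \cite{fk2distribution}.

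To attack the moment identity, I would first interpret $3^{k\cdot \mathrm{rk}_3}$ as $|\mathrm{Hom}(Cl_K[3]/Cl_K[3]^G,\mathbb{F}_3^k)|$, and then use class field theory to reinterpret each such homomorphism as an unramified abelian cubic extension of $K$ whose Galois closure over $\mathbb{Q}$ has a prescribed non-trivial $\varphi$-module structure. Since we are quotienting by $Cl_K[3]^G$, the relevant extensions $L/\mathbb{Q}$ are precisely those Galois closures that are \emph{not} abelian over $\mathbb{Q}$; concretely $\mathrm{Gal}(L/\mathbb{Q})$ should be a semidirect product $(\mathbb{Z}/3)^{k'} \rtimes \mathbb{Z}/3$ with a specific non-trivial action, for the various $k' \le k$. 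This converts the numerator of $M_k(3)$ into a count of pairs $(K, L)$ with prescribed Galois-theoretic and ramification constraints.

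Next I would parameterize both the cubic field $K$ and the extensions $L/K$ using Kummer theory over $F = \mathbb{Q}(\zeta_3)$. Every cyclic cubic $K/\mathbb{Q}$ lifts to $KF = F(\sqrt[3]{\alpha})$ for $\alpha \in F^\times$ on which complex conjugation acts by $\alpha \mapsto \alpha^{-1}$ modulo cubes, and the conductor/discriminant is dictated by the prime factorization of $\alpha$ in $F$. The unramified cubic extensions of $K$ are similarly parameterized by elements of $F$, with the unramifiedness condition translating into conditions that certain primes dividing $\alpha$ split completely (equivalently, certain cubic residue symbols vanish). The moment then becomes a multiple sum over tuples $(\alpha, \beta_1, \dots, \beta_k)$ of squarefree integral ideals in $F$, weighted by products of cubic residue symbols that encode the required splitting.

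The main analytic obstacle, and the place where the work of \cite{fk1} is genuinely adapted, is estimating these multi-variable character sums. One splits the sum into a ``diagonal'' part --- where the $\beta_i$ are built out of primes already appearing in $\alpha$, contributing the predicted main term --- and an ``off-diagonal'' part that must be shown to be negligible. The diagonal count reduces to a combinatorial lemma identifying the main term as $N(k+1,3) - N(k,3)$ by enumerating $k$-tuples of subspaces in $\mathbb{F}_3^{k'}$; this should parallel the quadratic combinatorial identity in \cite{fk1}. The off-diagonal cancellation requires non-trivial bounds on short character sums with cubic residue symbols over $F$, which is where the large sieve for cubic characters (and Heath-Brown-type arguments) can be applied unconditionally for $p=3$, whereas the argument for general $p$ in the paper's other theorems must invoke GRH to control the corresponding $L$-functions. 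Carrying out this off-diagonal bookkeeping uniformly in $k$, while keeping track of the Archimedean condition $D_K < X$ and the signs in the cubic reciprocity law over $F$, is the step I expect to be the most delicate.
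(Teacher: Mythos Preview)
Your outline reaches the right destination (moments $M_k(3)=N(k+1,3)-N(k,3)$, then \cite{fk2distribution}), and the analytic endgame you sketch---large sieve for cubic residue symbols to kill off-diagonal terms---is indeed what the paper uses. But the route you propose to \emph{produce} the character sum is genuinely different from the paper's.

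The paper never invokes Kummer theory or parameterizes unramified extensions of $K$. Instead it works directly with the quantity $|\mathrm{im}\,\varphi|$: Proposition~\ref{prop: im(1-sigma) equals divisors of D} shows that $|\mathrm{im}\,\varphi|$ equals $\tfrac{1}{p}$ times the number of divisors $b$ of the (reduced) discriminant that are global norms from $K$. By the Hasse norm theorem this becomes a local condition, detected by the idele class character $\chi=\prod_l \chi_l$ attached to $K$. Expanding the indicator $\tfrac{1}{p}(1+\chi+\cdots+\chi^{p-1})$ at each ramified prime and raising to the $k$th power yields the expression in Proposition~\ref{prop:mainexpression}: a sum over $p^{2k}$-tuples $(D_u)$ of coprime factors of the discriminant, weighted by $\prod_v\prod_{l\mid D_v}\chi_l(\prod_u D_u^{\Phi_k(u,v)})$ for an explicit bilinear exponent $\Phi_k$. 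The main term then comes from counting \emph{maximal unlinked index sets} in $(\mathbb{F}_p^2)^k$, which the paper shows (Lemmas~\ref{lem:Let--be} and~\ref{lem:Let--be-1}) are translates of subspaces of a specific shape, and whose enumeration gives exactly $N(k+1,p)-N(k,p)$.

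Your Kummer-theoretic parameterization over $\mathbb{Q}(\zeta_3)$ should in principle lead to an equivalent character sum, but with more overhead: you must track units and the action of complex conjugation on the Kummer generators, sort out which $(\alpha,\beta_1,\dots,\beta_k)$ give the same extension, and translate the ``non-abelian over $\mathbb{Q}$'' constraint into an explicit condition on the $\beta_i$. The paper's norm-condition approach sidesteps all of this: the variables are just rational divisors of $D$, the characters are the local components of the single character cutting out $K$, and the combinatorics of the main term is packaged cleanly in the linked/unlinked formalism rather than in a diagonal-versus-off-diagonal dichotomy on Kummer data. Either approach needs the same cubic large-sieve input, so the real economy of the paper's method is in setting up the sum, not in bounding it.
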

We can extend this to all $p$ under the assumption of GRH:
\begin{thm}
\label{thm:gerth p ranks}Assume GRH for Artin $L$-functions. The
density of degree $p$ cyclic fields with $\mathrm{rk}_{p}\left(\varphi\left(\ker\varphi^{2}\right)\right)=s$
is 
\[
\frac{\eta_{\infty}\left(p\right)}{\eta_{s}\left(p\right)\eta_{s+1}\left(p\right)p^{s\left(s+1\right)}}.
\]
\end{thm}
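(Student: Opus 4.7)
The plan is to follow the strategy of Fouvry--Klüners (compare Theorem~\ref{thm:fouvrykluners1}) and reduce Theorem~\ref{thm:gerth p ranks} to computing, for every $k\geq 1$, the moment
\[
M_k(p) = \lim_{X\to\infty}\frac{\sum_{K,\, D_K<X} p^{k\,\mathrm{rk}_p(\varphi(\ker\varphi^2))}}{\sum_{K,\, D_K<X} 1},
\]
where $K$ runs over degree $p$ cyclic extensions of $\mathbb{Q}$ (all totally real for $p$ odd, so no sign condition appears). The target value is $M_k(p) = N(k+1,p) - N(k,p)$, matching what Fouvry--Klüners obtain in the $p=2$ real case; given this, the density formula of Theorem~\ref{thm:gerth p ranks} follows from the moment-to-distribution result \cite{fk2distribution} via essentially the same Hall--Littlewood-style identity used by Fouvry--Klüners to pass from Theorem~\ref{thm:fouvrykluners1} to the real quadratic density.

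The first main step is to translate $\mathrm{rk}_p(\varphi(\ker\varphi^2))$ into something combinatorially explicit. Since $\varphi(\ker\varphi^2)\cong\ker\varphi^2/\ker\varphi$, class field theory together with a Kummer-theoretic reflection argument generalizing the Rédei matrix of Fouvry--Klüners should identify this group with the $\mathbb{F}_p$-cokernel of an explicit ``Rédei-type'' matrix whose entries are $p$-th power residue symbols $\left(\frac{q_i}{q_j}\right)_p$ between the primes $q_1,\dots,q_t$ dividing the conductor $\mathfrak{f}$ of $K$ (each $q_i\equiv 1\pmod p$, together with $p$ itself when $p$ is wildly ramified). Expanding $p^{k\,\mathrm{rk}_p(\cdots)}$ as a count of $\mathbb{F}_p$-linear functionals on the cokernel then rewrites $M_k(p)$ as a weighted sum, over $k$-tuples of subsets of the ramified primes, of products of such residue symbols.

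The second step is analytic. For odd $p$, the conductor $\mathfrak{f}$ is essentially squarefree and supported on primes $\equiv 1\pmod p$ (up to a bounded power of $p$), and $D_K=\mathfrak{f}^{p-1}$ by the conductor--discriminant formula, so ordering by discriminant up to $X$ is equivalent to ordering the conductor up to $X^{1/(p-1)}$. Inserting the expansion of $M_k(p)$ into this sum over conductors, the diagonal contribution (where every inner character sum collapses to a trivial one) should produce $N(k+1,p)-N(k,p)$ times the total count, while the off-diagonal contributions take the form of $k$-linear sums of products of order $p$ Dirichlet characters over primes. To control the off-diagonal I would combine the GRH-conditional Lagarias--Odlyzko form of Chebotarev, a large-sieve inequality for order $p$ characters, and Siegel--Walfisz-type arguments for the individual characters.

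The main obstacle will be estimation of the $k$-linear character sums when $k\geq 2$ and $p>3$. For $p=3$, Heath-Brown's cubic large sieve supplies enough unconditional cancellation to treat all moments, which is what ultimately yields the unconditional Theorem~\ref{thm:gerth 3 rank}; no comparable unconditional input is known for higher order characters, which forces the use of GRH for the relevant Dirichlet $L$-functions in order to obtain power-saving bounds uniform in the modulus. Once these estimates are in place, matching the main term to $N(k+1,p)-N(k,p)$ is purely combinatorial and should follow the same pattern as in Fouvry--Klüners.
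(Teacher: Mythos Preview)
Your outline is essentially the paper's strategy: reduce Theorem~\ref{thm:gerth p ranks} to the moment identity $M_k(p)=N(k+1,p)-N(k,p)$ (this is Theorem~\ref{thm:mainthm}), bound the off-diagonal character sums under GRH, identify the main term combinatorially, and then invoke \cite{fk2distribution}. Two points of difference are worth flagging.

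First, the algebraic input: you propose a R\'edei-type matrix coming from a Kummer/reflection argument, whereas the paper instead characterises $|\varphi(\ker\varphi^2)|$ directly as $p^{-1}$ times the number of divisors of the (reduced) conductor that are global norms from $K$ (Proposition~\ref{prop: im(1-sigma) equals divisors of D}), then detects the norm condition locally via the Hasse norm theorem and the associated idele class character (Theorem~\ref{thm:im(1-sigma) is a character sum}). The resulting expression is a sum over factorisations of the conductor weighted by products of order-$p$ characters, morally equivalent to what a R\'edei matrix would give, but the norm-theoretic route avoids having to set up a reflection principle and makes the passage to the $k$-th moment (Proposition~\ref{prop:mainexpression}) and the ``linked/unlinked'' combinatorics of Section~\ref{sec:Computing-the--th} quite clean.

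Second, your analytic plan understates what GRH is used for. The paper does not merely need GRH for Dirichlet $L$-functions or an effective Chebotarev: in the general-$p$ error analysis it packages the double character sum into the Artin $L$-function of a $(p-1)$-dimensional representation $\sigma$ (a twist of an induced character of $\mathrm{Gal}(\mathbb{Q}(\zeta_p,\sqrt[p]{C_u})/\mathbb{Q})$), relates the relevant sum to $L(s,\sigma)^{1/p}$, and uses GRH for that Artin $L$-function to push the contour to $\mathrm{Re}(s)=1/2+\epsilon$ with polynomial conductor dependence. This is precisely the substitute for the missing order-$p$ analogues of Propositions~\ref{prop:bs cubic epsilon } and \ref{prop:bs dirichlet epsilon}; a Lagarias--Odlyzko/Chebotarev bound alone would not directly handle the bilinear structure when the two linked variables $A_u,A_v$ are of comparable size.
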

Before continuing we make some remarks about $Cl_{K}\left[p\right]^{G}$.
It is the part of $Cl_{K}\left[p\right]$ corresponding by class field
theory to the genus field of $K$, that is the maximal unramified
extension of $K$ which is abelian over $\mathbb{Q}$. It can be shown
$\left|Cl_{K}\left[p\right]^{G}\right|=p^{r-1}$ where $r$ is the
number of primes ramified in $K$ and that the average of $\mathrm{rk}_{p}\left(Cl_{K}\left[p\right]^{G}\right)$
is $\infty$. In the case $p=2$ removing this part corresponds to
replacing $2$-rank by $4$-rank.

We deduce Theorems \ref{thm:gerth 3 rank} and \ref{thm:gerth p ranks}
from the following theorem together with \cite{fk2distribution}.
Define 
\[
M_{k}\left(p\right)=\lim_{X\longrightarrow\infty}\frac{\sum_{K,D_{K}<X}p^{k\mathrm{rk}_{p}\left(\varphi\left(\ker\varphi^{2}\right)\right)}}{\sum_{K,D_{K}<X}1}.
\]

\begin{thm}
\label{thm:mainthm}Let $k\in\mathbb{Z}_{\ge1}$. Then unconditionally
for $p=3$ and under the assumption of GRH for Artin $L$-functions
for $p>3$ we have 
\[
M_{k}\left(p\right)=N\left(k+1,p\right)-N\left(k,p\right).
\]
\end{thm}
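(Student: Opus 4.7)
The plan is to follow the strategy of Fouvry and Kl$\ddot{\text{u}}$ners \cite{fk1}, adapted to cyclic degree $p$ fields. The starting observation is that
\[
p^{k\cdot\mathrm{rk}_{p}\left(\varphi\left(\ker\varphi^{2}\right)\right)}=\left|\mathrm{Hom}\left(\varphi\left(\ker\varphi^{2}\right),\mathbb{F}_{p}^{k}\right)\right|,
\]
so that the numerator in the definition of $M_k(p)$ counts pairs $(K,\psi)$ with $\psi$ a homomorphism $\varphi(\ker\varphi^{2})\to\mathbb{F}_{p}^{k}$. Since $\varphi$ induces an isomorphism $\ker\varphi^{2}/\ker\varphi\cong\varphi(\ker\varphi^{2})$, and since $\ker\varphi=Cl_{K}[p]^{G}$ is the genus subgroup, characters of this quotient correspond by class field theory to a specific family of unramified cyclic degree $p$ extensions $L/K$, namely those which lie in $K^{\ker\varphi^{2}}$ but not in the genus field. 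Equivalently, the Galois closure $\widetilde L/\mathbb{Q}$ is a non-abelian $p$-group of a specific ``second-level'' type in the filtration by powers of $\varphi$. Thus $M_{k}(p)$ is reinterpreted as an average over a count of $k$-tuples of such extensions.

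The second step is to parametrize cyclic degree $p$ extensions $K/\mathbb{Q}$ by Dirichlet characters $\chi$ of order $p$ and conductor $f$ (with $D_{K}$ a power of $f$), and to parametrize the relevant unramified extensions $L_i/K$ by applying Kummer theory over $K(\zeta_{p})$. After pulling Galois conditions back through Kummer descent, this reduces the inner count for fixed $K$ to an explicit expression in the prime factorization of $f$, in the same spirit as the Gauss genus formula that underpins the $4$-rank case of \cite{fk1}. Swapping the order of summation and fixing the ramified primes then reduces the problem to the estimation of a multiple character sum over tuples of primes weighted by $p$-th power residue symbols.

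Combinatorially, the main term should match $N(k+1,p)-N(k,p)$: this is the $k$-th moment of the Cohen-Lenstra-Martinet distribution $\mu_{1}$ underlying Gerth's refinement, and for $p=2$ it matches Theorem~\ref{thm:fouvrykluners1}. Producing it amounts to identifying the contribution of those tuples of characters which correspond to ``independent'' subspaces at the ramified primes, and then invoking the standard formula expressing $N(k+1,p)-N(k,p)$ as a count of $\mathbb{F}_{p}$-subspaces of $\mathbb{F}_{p}^{k+1}$ of a prescribed type.

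The hard part will be controlling the off-diagonal contributions, which take the form of multiple sums of $p$-th power residue symbols over tuples of primes in arithmetic progressions, so that to beat the trivial bound one needs cancellation in character sums over primes. For $p=3$ this cancellation is available unconditionally through the equidistribution theory of the cubic residue symbol, in the style used by Fouvry-Kl$\ddot{\text{u}}$ners in the quadratic setting, which accounts for the unconditional statement at $p=3$. For $p>3$ no such unconditional input is presently available, and the required power-saving bounds for the corresponding Hecke and Artin $L$-function character sums are instead supplied by GRH. Combining the diagonal main term with these bounds on the off-diagonal error yields $M_{k}(p)=N(k+1,p)-N(k,p)$.
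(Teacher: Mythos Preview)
Your high-level plan is in the right spirit (Fouvry--Kl\"uners strategy, character sums, cubic large sieve for $p=3$, GRH for $p>3$), but the algebraic input you propose is not the one that actually drives the argument, and several essential steps are only asserted, not carried out.

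First, the paper does \emph{not} pass through Kummer theory over $K(\zeta_p)$ or parametrize unramified extensions $L/K$. Instead it proves directly that
\[
\left|\varphi(\ker\varphi^{2})\right|=\frac{1}{p}\,\#\{\,b\mid D : b=N_{K/\mathbb{Q}}(\alpha)\text{ for some }\alpha\in K^{\times}\,\},
\]
and then detects the norm condition locally via the Hasse norm theorem, so that each local obstruction is picked up by the idele class character $\chi=\prod_{l}\chi_{l}$ cutting out $K$. Expanding the indicator $\frac{1}{p}(1+\chi+\cdots+\chi^{p-1})$ at each ramified prime produces an explicit character sum indexed by factorizations $D=\prod_{u}D_{u}$ with $u\in(\mathbb{Z}/p^{2}\mathbb{Z})^{k}$, governed by the bilinear form $\Phi_{k}(u,v)=\sum_{i}u_{i1}(v_{i2}-u_{i2})$. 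This concrete combinatorial structure is what makes both the main term and the error term tractable; your Kummer-theoretic outline never arrives at an analogous explicit expression, so there is nothing to feed into the analytic machinery.

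Second, the identification of the main term is not a matter of ``matching the Cohen--Lenstra--Martinet moment'': it is a genuine combinatorial computation. One shows that the surviving configurations are exactly the \emph{maximal unlinked} index sets (those with $\Phi_{k}(u,v)=0$ for all pairs), proves that each such set is a translate of a $k$-dimensional subspace $V\subset\mathbb{F}_{p}^{2k}$ of the special form $V=p(V)^{\perp}\oplus p(V)$, and then counts these to obtain $\sum_{r}p^{r}n(k,r)=N(k+1,p)-N(k,p)$. None of this is visible from your outline.

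Third, on the analytic side, for $p=3$ the paper needs several distinct large-sieve inputs (Heath-Brown's cubic large sieve, the Baier--Young bound for cubic Dirichlet characters, and a two-variable additive large sieve over $\mathbb{Z}[\zeta_{3}]$), applied according to the relative sizes of the linked variables, together with a Siegel--Walfisz-type input for Hecke characters. ``Equidistribution of the cubic residue symbol'' is too vague a placeholder: the actual difficulty is the case where both linked variables are large and comparable, and the case where one is large and one is small but $\ge 2$, which are handled by different tools. For $p>3$ the missing analogues of these $\epsilon$-saving sieves are precisely why GRH is invoked.

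In short: replace the Kummer parametrization by the Hasse-norm/idele-character computation of $|\varphi(\ker\varphi^{2})|$, derive the explicit $\Phi_{k}$-character sum, carry out the maximal-unlinked-set count for the main term, and specify which large-sieve bound handles each regime of the error.
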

The proof of Theorem \ref{thm:mainthm} follows the strategy of Fouvry
and Kl$\ddot{\text{u}}$ners. For any degree $p$ cyclic field $K$
we express $\left|Cl_{K}\left[p\right]\right|$ using a sum of idele
class characters, and then sum over all degree $p$ cyclic fields
of discriminant up to $X$. We then study the asymptotics of this
expression using techniques from analytic number theory. In the $p=3$
case we require several versions of a large seive inequality for cubic
characters to bound the error term. We prove one such version as well
as applying several others from the literature, due to Heath-Brown
\cite{heathbrowncubicguasssums}, Baier-Young \cite{StephanBaierMatthewYoung}
and Iwaniec-Kowalski \cite{IwaniecKowalski}. The reason for assuming
GRH in the general case is that certain versions of the large seive
are not yet available for order $p$ characters. In particular we
lack analogues of Propositions \ref{prop:bs cubic epsilon } and \ref{prop:bs dirichlet epsilon}.
This is the only obstacle to an unconditional proof for all $p$.

Finally we remark briefly about an equivalent formulation of the Cohen-Lenstra
conjectures which is commonly used. The distribution $\mu_{u}$ is
characterized by the fact \cite{EVW} that for all $A\in S$
\begin{equation}
\mathbb{E}_{G\sim\mu_{u}}\left(\left|\mathrm{Sur}\left(G,A\right)\right|\right)=\sum_{G\in S}\mu_{u}\left(G\right)\cdot\left|\mathrm{Sur}\left(G,A\right)\right|=\frac{1}{\left|A\right|^{u}}.\label{eq:cld characterization}
\end{equation}
 This is often called the $A$-moment of $\mu_{u}$ and computing
it only for certain $A$ can still provide information about the distribution
of elements in $Cl_{K}$. 

It is clear that $\left|\mathrm{Hom}\left(G,\left(\mathbb{Z}/p\mathbb{Z}\right)^{k}\right)\right|=p^{k\mathrm{rk}_{p}\left(G\right)}$.
Furthermore $\left|\mathrm{Hom}\left(G,\left(\mathbb{Z}/p\mathbb{Z}\right)^{k}\right)\right|=\sum_{i=0}^{k}n\left(k,i,p\right)\left|\mathrm{Sur}\left(G,\left(\mathbb{Z}/p\mathbb{Z}\right)^{i}\right)\right|$
where $n\left(k,i,p\right)$ is the number of $i$-dimensional subspaces
of $\mathbb{F}_{p}^{k}$. Hence theorem \ref{thm:mainthm} can be
rephrased as computing the $A$ moments in the above sense for all
the groups $A=\left(\mathbb{Z}/3\mathbb{Z}\right)^{k}$.

\subsection*{Acknowledgements}

The author extends his thanks to several people for helpful and stimulating
discussions while working on this problem: John Friedlander, J$\ddot{\text{u}}$rgen
Kl$\ddot{\text{u}}$ners, Jacob Tsimerman, Asif Zaman. He would also
like to thank Leo Goldmakher and Matthew Young for providing useful
references on character sums.

\section{Counting $p$-torsion in degree $p$ cyclic fields}

Let $K$ be a degree $p$ Galois extension of $\mathbb{Q}$ with Galois
group $G=\left\langle \sigma\right\rangle $. Let $\varphi=1-\sigma$.
There is a filtration 
\[
Cl_{K}\left[p\right]^{G}=\ker\varphi\subseteq\ker\varphi^{2}\subseteq\cdots\subseteq\ker\varphi^{p-1}=Cl_{K}\left[p\right].
\]
 From this we can write down the exact sequence
\[
1\longrightarrow Cl_{K}\left[p\right]^{G}\longrightarrow\ker\varphi^{2}\longrightarrow\varphi\left(\ker\varphi^{2}\right)\longrightarrow1
\]
 so that $\left|\ker\varphi^{2}\right|=\left|Cl_{K}^{G}\right|\left|\varphi\left(\ker\varphi^{2}\right)\right|$.
We will consider $\varphi$ acting on $\ker\varphi^{2}$ and throughout
the section write $\mathrm{im}\varphi=\varphi\left(\ker\varphi^{2}\right)$.

Denote by $N$ the norm map $N_{K/\mathbb{Q}}$ (both on ideals and
elements of $K$). Let $J$ be the group of fractional ideals of $K$.
Furthermore let $P_{1},\ldots,P_{r}$ be the ramified primes of $K$,
and let $B=\left\{ P_{1}^{e_{k}}\cdots P_{r}^{e_{r}}\mid e_{i}=0,1,\ldots,p-1\right\} $. 
\begin{prop}
Let $r$ be the number of primes ramified in $K$. Then $\left|Cl_{K}^{G}\right|=p^{r-1}$. 
\end{prop}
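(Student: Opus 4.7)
The plan is to derive the order of $Cl_{K}^{G}$ by running Galois cohomology on the two short exact sequences of $G$-modules
\[
1\longrightarrow E_{K}\longrightarrow K^{*}\longrightarrow P_{K}\longrightarrow1,\qquad 1\longrightarrow P_{K}\longrightarrow J_{K}\longrightarrow Cl_{K}\longrightarrow1,
\]
which is in essence Chevalley's ambiguous class number formula specialized to $F=\mathbb{Q}$. Since $p$ is odd and $K/\mathbb{Q}$ is cyclic of degree $p$, $K$ is automatically totally real, which keeps every cohomological input clean.

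First I would take $G$-cohomology of the second sequence to obtain
\[
0\longrightarrow P_{K}^{G}\longrightarrow J_{K}^{G}\longrightarrow Cl_{K}^{G}\longrightarrow H^{1}(G,P_{K})\longrightarrow H^{1}(G,J_{K}).
\]
Since $J_{K}$ is a permutation $G$-module $\bigoplus_{v}\mathrm{Ind}_{G_{w}}^{G}\mathbb{Z}$, Shapiro's lemma together with $\mathrm{Hom}(G_{w},\mathbb{Z})=0$ for finite cyclic $G_{w}$ gives $H^{1}(G,J_{K})=0$. Hilbert 90 forces $H^{1}(G,K^{*})=0$, so the first sequence gives $H^{1}(G,P_{K})\hookrightarrow H^{2}(G,E_{K})$. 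For totally real $K$ the unit group satisfies $E_{K}^{G}=\{\pm1\}$, and $-1=N_{K/\mathbb{Q}}(-1)$ for odd $p$, so $H^{2}(G,E_{K})=E_{\mathbb{Q}}/N_{K/\mathbb{Q}}E_{K}=0$. Consequently $Cl_{K}^{G}$ equals the image of $J_{K}^{G}$ in $Cl_{K}$.

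Next I would identify this image explicitly. The group $J_{K}^{G}$ is generated by $P_{1},\ldots,P_{r}$ together with ideals of the form $qO_{K}$ for rational primes $q$, and the latter are principal in $K$. Hence the image in $Cl_{K}$ is generated by $[P_{1}],\ldots,[P_{r}]$, each of order dividing $p$ since $P_{i}^{p}=(p_{i})O_{K}$, giving a quotient of $(\mathbb{Z}/p\mathbb{Z})^{r}$. To determine the precise quotient, the first sequence on $G$-invariants yields $P_{K}^{G}/(K^{*})^{G}\cong H^{1}(G,E_{K})$, and the Herbrand quotient $h(G,E_{K})=1/p$ together with $H^{2}(G,E_{K})=0$ shows $|H^{1}(G,E_{K})|=p$. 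A diagram chase then identifies the principal-ideal relations among the $[P_{i}]$ with a subgroup of order $p$ in $(\mathbb{Z}/p\mathbb{Z})^{r}$, leaving $|Cl_{K}^{G}|=p^{r-1}$.

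The main obstacle is the Herbrand quotient computation $h(G,E_{K})=1/p$. By Dirichlet's unit theorem and the logarithmic embedding into the norm-zero hyperplane in $\mathbb{R}^{p}$, the free part of $E_{K}$ is commensurable with the augmentation ideal $I=\ker(\mathbb{Z}[G]\to\mathbb{Z})$; since Herbrand quotients are invariant under commensurability on a cyclic group, this reduces to the standard calculation $h(G,I)=h(G,\mathbb{Z}[G])/h(G,\mathbb{Z})=1/p$, while the torsion part $\mu_{K}=\{\pm1\}$ contributes $h=1$ for odd $p$. Once this piece is in hand the remainder is formal diagram chasing.
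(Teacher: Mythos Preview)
Your argument is correct and is essentially Chevalley's ambiguous class number formula specialized to base $\mathbb{Q}$, carried out cleanly via Galois cohomology. The paper takes a more hands-on route: it defines the map $\phi:(\mathbb{Z}/p\mathbb{Z})^{r}\to Cl_{K}^{G}$ sending $(a_{i})\mapsto[\prod P_{i}^{a_{i}}]$, proves surjectivity by a direct application of Hilbert~90 (if $[I]\in Cl_{K}^{G}$ then $\sigma(I)=(\alpha)I$ with $N\alpha=1$, so $\alpha=\beta/\sigma(\beta)$ and $(\beta)I$ is $G$-invariant as an ideal), and then produces a non-trivial element of $\ker\phi$ by computing that $1-\sigma$ on $\mathcal{O}_{K}^{\times}/\{\pm1\}$ has cokernel of order~$p$ via the determinant of $1-\sigma$ on the log-lattice.

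The two proofs share the same skeleton: Hilbert~90 handles the ``upper bound'' direction, and the index-$p$ statement about $1-\sigma$ on units handles the ``lower bound''. Your cohomological packaging makes the bookkeeping automatic---in particular, the identification $|H^{1}(G,E_{K})|=p$ via the Herbrand quotient gives exactly one relation among the $[P_{i}]$, whereas the paper's explicit construction literally exhibits a non-trivial kernel element but, as written, does not obviously pin down $|\ker\phi|=p$ rather than merely $\geq p$. Conversely, the paper's determinant computation $\det(1-\sigma|_{V})=\prod_{i=1}^{p-1}(1-\zeta^{i})=p$ is pleasantly concrete compared to invoking commensurability with the augmentation ideal. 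Either approach is fine here; yours is the more systematic one and generalizes immediately to other cyclic base changes.
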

\begin{proof}
Let $\phi:\left(\mathbb{Z}/p\mathbb{Z}\right)^{r}\longrightarrow Cl_{K}^{G}$
be the homomorphism $\left(A_{1},\ldots,A_{r}\right)\longmapsto\left[\prod_{i}P_{i}^{a_{i}}\right]$.
We will show that the $P_{i}$ generate $Cl_{K}{}^{G}$ which will
imply that $\phi$ is surjective. Suppose $\left[I\right]\in Cl_{K}{}^{G}$,
so that $\left(\alpha\right)I=\sigma\left(I\right)$ for some $\alpha\in K^{\times}$.
Taking norm gives $N\left(\alpha\right)=1$ so we can pick $\alpha$
such that $N\alpha=1$ in $K$. By Hilbert's Theorem 90 there is some
$\beta\in K$ such that $\alpha=\left(1-\sigma\right)\beta$. Thus
we get the equality of ideals $\left(\beta\right)I=\sigma\left(\left(\beta\right)I\right)$.
This implies $\left(\beta\right)I$ is supported only on ramified
and rational primes. Thus $\left[I\right]\in\mathrm{im}\phi$ and
hence $\phi$ is surjective.

It remains to show that $\ker\phi$ is non-empty. Let $h:\mathcal{O}_{K}^{\times}/\pm1\longrightarrow\mathcal{O}_{K}^{\times}/\pm1$
be defined by $h\left(u\right)=u/\sigma\left(u\right)$. First we
show $\left|\mbox{coker}h\right|=p$. Consider the map $\log_{K}:K\longrightarrow\prod_{i}\mathbb{R}_{\sigma^{i}}$.
By Dirichlet's unit theorem $\log_{K}$ gives an isomorphism between
$\mathcal{O}_{K}^{\times}/\pm1$ and a lattice of rank $p-1$ contained
in the trace 0 hyperplane. Let $V$ be this lattice. Then $1-\sigma$
acts as a linear map on $V$ and has eigenvalues $1-\zeta^{i}$ for
$1\le i\le p-1$ and hence has determinant $p$. Thus the index of
$\left(1-\sigma\right)\log_{K}\left(\mathcal{O}_{K}^{\times}/\pm1\right)=\log_{K}\left(\left(1-\sigma\right)\left(\mathcal{O}_{K}^{\times}/\pm1\right)\right)$
in $\log_{K}\left(\mathcal{O}_{K}^{\times}/\pm1\right)$ is $p$.
Thus $\left|\mbox{coker}h\right|=p$.

Let $u\in\mathcal{O}_{K}^{\times}$ represent a non-trivial element
of coker$h$. Since $u\in\mathcal{O}_{K}^{\times}$ we have $N_{K/\mathbb{Q}}u=1$
so by Hilbert's Theorem 90 $u=\sigma\left(x\right)/x$ for some $x\in K$.
Since $u$ is non-trivial in $\mbox{coker}h$ this implies $x\notin\mathcal{O}_{K}^{\times}$.
So consider $I=\left(x\right)$ which is not the unit ideal. Since
$\sigma\left(x\right)/x\in\mathcal{O}_{K}^{\times}$ this implies
$I$ is supported only on ramified primes and rational primes. If
$x$ is only supported on rational primes then $u=\sigma\left(x\right)/x=1$
which contradicts the fact that $u$ represent a non-trivial element
of coker$h$. This shows that $\ker\phi$ is non-empty, which proves
what we wanted.
\end{proof}
Next we give another description of $\mathrm{im}\varphi$. 
\begin{lem}
\label{lem:JtrivialH1}Consider $N$ acting on $J$ the group of fractional
ideals of $K$. Then 
\[
\ker N/\mathrm{im}\varphi=1.
\]
 
\end{lem}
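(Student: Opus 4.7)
The plan is to recognize the claim as a standard cohomological vanishing, namely $\hat{H}^{-1}(G,J)=0$, where here $\mathrm{im}\,\varphi$ must be read as $\varphi(J)=(1-\sigma)J$. Since $J$ is the free abelian group on the primes of $\mathcal{O}_K$ and $G$ acts by permuting those primes, $J$ is a $G$-permutation module, and such modules have trivial Tate cohomology. I would prove this by unwinding the decomposition rather than appealing to cohomological machinery, since the direct argument is short.

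First I would decompose $J$ according to rational primes: $J=\bigoplus_\ell J_\ell$, where $\ell$ runs over rational primes and $J_\ell$ is the free abelian group on the primes of $\mathcal{O}_K$ lying over $\ell$. Both $N$ and $\varphi=1-\sigma$ preserve this decomposition, so it suffices to show $\ker N|_{J_\ell}=\varphi(J_\ell)$ for each $\ell$.

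Next I would use the fact that since $[K:\mathbb{Q}]=p$ is prime and $K/\mathbb{Q}$ is Galois, each $\ell$ is either totally split, inert, or totally ramified in $K$. In the totally split case, if $P_0,\ldots,P_{p-1}$ are the primes above $\ell$ with $\sigma(P_i)=P_{i+1}$, then $J_\ell\cong\mathbb{Z}[G]$ as $G$-modules. Writing an element $\sum a_i P_i$, the norm is $N(\sum a_i P_i)=(\sum a_i)(P_0+\cdots+P_{p-1})$, so $\ker N$ is the augmentation ideal, which is exactly $(1-\sigma)\mathbb{Z}[G]=\varphi(J_\ell)$. In both the inert and totally ramified cases, $J_\ell=\mathbb{Z}\cdot P$ with $\sigma$ acting trivially, so $\varphi(J_\ell)=0$ and $N$ acts as multiplication by $p$ (either because $N(P)$ has $p$ equal factors or because $P^p=\ell\mathcal{O}_K$ in the ramified case); hence $\ker N=0=\varphi(J_\ell)$.

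Assembling these three cases across all $\ell$ yields $\ker N=\varphi(J)$, and the lemma follows. There is no real obstacle here; the only point to keep in mind is that for cyclic extensions of prime degree no partial splitting occurs, so the three-case analysis is exhaustive, and the argument avoids any delicate issue at the ramified primes.
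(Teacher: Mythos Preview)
Your proof is correct and follows essentially the same route as the paper's. The paper also decomposes $I\in\ker N$ prime by prime: it first notes that an ideal in $\ker N$ can only be supported on split primes (your inert and ramified cases collapsed into one remark), and then for each packet of split primes $Q_1^{a_1}\cdots Q_p^{a_p}$ with $\sum a_i=0$ it writes down the explicit preimage $Q_1^{a_1}Q_2^{a_1+a_2}\cdots Q_{p-1}^{a_1+\cdots+a_{p-1}}$ under $1-\sigma$, which is exactly your identification of $\ker N|_{J_\ell}$ with the augmentation ideal of $\mathbb{Z}[G]$. Your added cohomological framing ($\hat H^{-1}(G,J)=0$ for a permutation module) is a nice conceptual gloss, but the underlying computation is the same.
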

\begin{proof}
It is clear that $\mathrm{im}\left(1-\sigma\right)\subset\ker N$.
Suppose $NI=1$ for some ideal $I\in J$. Then $I$ can only be supported
on split primes. Any set of $p$ split primes $Q_{1}^{a_{1}}Q_{2}^{a_{2}}\cdots Q_{p}^{a_{p}}$
in the decomposition of $I$ contributes $q^{\sum a_{i}}$ in $NI$,
which implies that $\sum a_{i}=0$. Then $Q_{1}^{a_{1}}Q_{2}^{a_{2}}\cdots Q_{p}^{a_{p}}=\left(Q_{1}^{a_{1}}Q_{2}^{a_{1}+a_{2}}\cdots Q_{p-1}^{a_{1}+\cdots+a_{p-1}}\right)^{1-\sigma}$.
Applying this to all primes in $I$ shows $I\in\mathrm{im}\left(1-\sigma\right)$.
\end{proof}
\begin{lem}
\label{lem:im(1 - sig) equals norm(alpha)}For any class $\mathfrak{b}\in Cl_{K}{}^{G}$
we have 
\[
\mathfrak{b}\in\mathrm{im}\varphi\iff N\mathfrak{b}=N\alpha
\]
 for some $\alpha\in K$ (note this condition is independent of the
ideal representing $\mathfrak{b}$). 
\end{lem}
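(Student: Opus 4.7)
The plan is to prove both implications by passing between ideal-level identities and class-level statements, using Lemma \ref{lem:JtrivialH1} as the bridge for the nontrivial direction. Throughout, $N\mathfrak{b}$ will mean the ideal $NI \subseteq \mathbb{Z}$ for any ideal $I$ representing $\mathfrak{b}$; the claim that this is well-defined up to the prescribed equivalence is immediate, since replacing $I$ by $(\gamma)I$ changes $NI$ by $(N\gamma)$.

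For the forward implication, suppose $\mathfrak{b} = \varphi(\mathfrak{c})$ for some $\mathfrak{c} \in \ker\varphi^{2}$, and let $J$ be any fractional ideal representing $\mathfrak{c}$. Then $(1-\sigma)J$ is an honest fractional ideal representing $\mathfrak{b}$, and
\[
N\bigl((1-\sigma)J\bigr) \;=\; NJ \cdot N(\sigma J)^{-1} \;=\; (1) \;=\; (N\,1),
\]
so the condition holds with $\alpha = 1$, completing this direction.

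For the converse, suppose $\mathfrak{b} \in Cl_{K}^{G}$ satisfies $N\mathfrak{b} = (N\alpha)$ for some $\alpha \in K^{\times}$. I would pick any ideal representative $I$ of $\mathfrak{b}$ and replace it with $I' = \alpha^{-1}I$, which lies in the same class but now satisfies $NI' = (1)$ as an ideal of $\mathbb{Z}$. Lemma \ref{lem:JtrivialH1} then produces a fractional ideal $J'$ with $I' = (1-\sigma)J'$. Setting $\mathfrak{c} = [J'] \in Cl_{K}$ gives $\varphi(\mathfrak{c}) = [I'] = \mathfrak{b}$, and since $\mathfrak{b} \in Cl_{K}^{G} = \ker\varphi$ we immediately get $\varphi^{2}(\mathfrak{c}) = \varphi(\mathfrak{b}) = 0$. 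Hence $\mathfrak{c} \in \ker\varphi^{2}$ and $\mathfrak{b} = \varphi(\mathfrak{c}) \in \varphi(\ker\varphi^{2}) = \mathrm{im}\,\varphi$, as required.

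The only genuinely nontrivial step is the passage from the ideal-level norm triviality of $\alpha^{-1}I$ to an ideal-level factorisation through $1-\sigma$, which is exactly what Lemma \ref{lem:JtrivialH1} supplies; without that lemma one would be stuck with only a class-level statement. The observation that the auxiliary class $\mathfrak{c}$ automatically lands in $\ker\varphi^{2}$ (and not merely in $Cl_{K}$) comes for free from the hypothesis $\mathfrak{b} \in Cl_{K}^{G}$, so no separate argument is needed there.
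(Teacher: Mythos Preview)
Your argument follows the same route as the paper's: use Lemma~\ref{lem:JtrivialH1} to write a norm-trivial representative of $\mathfrak{b}$ as $(1-\sigma)J'$, then pass to classes. The forward direction and the invocation of the lemma are fine.

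There is a gap in the last step. You claim that $\mathfrak{c}\in\ker\varphi^{2}$ ``comes for free'' from $\varphi^{2}(\mathfrak{c})=\varphi(\mathfrak{b})=0$. But in this paper $\varphi$ is declared to be $1-\sigma$ acting on $Cl_{K}[p]$, so $\ker\varphi^{2}$ is by definition a subset of $Cl_{K}[p]$. Your computation shows only that $(1-\sigma)^{2}\mathfrak{c}=0$ in the full class group $Cl_{K}$; it does not show $\mathfrak{c}^{p}=1$. The paper treats exactly this as a separate step (``It remains to show $\mathfrak{a}\in Cl_{K}[p]$'') and supplies an argument. One clean way to fill the gap: the norm $N=1+\sigma+\cdots+\sigma^{p-1}$ annihilates $Cl_{K}$, and $N\equiv p+\tfrac{p(p-1)}{2}(\sigma-1)\pmod{(\sigma-1)^{2}}$; applying this to $\mathfrak{c}$ and using that $(1-\sigma)\mathfrak{c}=\mathfrak{b}$ is already $p$-torsion yields $\mathfrak{c}^{p}=1$. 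Without this verification you have only placed $\mathfrak{b}$ in $(1-\sigma)Cl_{K}$, not in $\varphi(\ker\varphi^{2})=\mathrm{im}\,\varphi$.
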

\begin{proof}
Suppose first that $\mathfrak{b}\in\mathrm{im}\varphi$. So for some
$\mathfrak{a}\in Cl_{K}$, we have $\mathfrak{b}=\left(1-\sigma\right)\left(\mathfrak{a}\right)=\mathfrak{a}/\sigma\left(\mathfrak{a}\right)$.
Let $\mathfrak{a}$ also denote some representative such that $\alpha\mathfrak{b}=\mathfrak{a}/\sigma\left(\mathfrak{a}\right)$
in the group of ideals. Taking norm of this gives $N\mathfrak{b}=N\alpha^{-1}$
which proves one direction.

Now suppose that $N\mathfrak{b}=N\alpha$ for some $\alpha\in K$.
Hence $\mathfrak{b}=\left(\alpha\right)I$ for some ideal $I\in\ker N$.
By Lemma \ref{lem:JtrivialH1} we have $I=\mathrm{im}\left(1-\sigma\right)\mathfrak{a}$
for some ideal $\mathfrak{a}\in\ker\left(1-\sigma\right)^{2}$. 

It remains to show $\mathfrak{a}\in Cl_{K}\left[p\right]$. Since
$\mathfrak{b}=\mathfrak{a}^{1-\sigma}$ and $\mathfrak{b}\in Cl_{K}{}^{G}$
this implies $\mathfrak{a}^{1-\sigma}=\mathfrak{a}^{\sigma-\sigma^{2}}$,
and so $\mathfrak{a}\mathfrak{a}^{\sigma^{2}}=\mathfrak{a}^{\sigma}\mathfrak{a}^{\sigma}$.
Multiplying both sides by $\mathfrak{a}^{\left(p-2\right)\sigma}$
gives
\begin{eqnarray*}
\left(\mathfrak{a}^{\sigma}\right)^{p} & = & \mathfrak{a}^{1+\left(p-2\right)\sigma+\sigma^{2}}\\
 & = & \mathfrak{a}^{\left(1-\sigma\right)^{2}}\\
 & = & 1
\end{eqnarray*}
Thus $\mathfrak{a}\in Cl_{K}\left[p\right]$ as required.
\end{proof}
Let $D_{K}$ denote the discriminant of $K$. Then the discriminant
is of the form $D_{K}=\left(p_{1}\cdots p_{r}\right)^{p-1}$ where
each factor is a prime congruent to 1 mod $p$ or equal to $p^{2}$
and they are distinct. Now let 
\[
D=\begin{cases}
D_{K} & \mbox{if }p\nmid D_{K}\\
D_{K}/p^{2} & \mbox{if }p\mid D_{K}.
\end{cases}
\]
 Let $\mathfrak{b}\in Cl_{K}{}^{G}$ and $b=N\mathfrak{b}$. The class
$\mathfrak{b}$ has a representative which lies in $B$ which implies
$b\mid D$. By Lemma \ref{lem:im(1 - sig) equals norm(alpha)} we
want to count the number of classes $\mathfrak{b}\in Cl_{K}{}^{G}$
such that $N\mathfrak{b}=N\alpha$ for some $\alpha\in K$. This is
the number of divisors $b$ of $D$ such that $b=N\alpha$ for some
$\alpha\in K$, divided by $p$, since $p$ different divisors will
come from the same class in $Cl_{K}{}^{G}$.

Thus we have shown 
\begin{prop}
\label{prop: im(1-sigma) equals divisors of D}With the above notation
\end{prop}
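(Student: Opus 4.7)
The statement, as the paragraph preceding it makes explicit, should assert that $|\mathrm{im}\varphi|$ equals $\tfrac{1}{p}$ times the number of divisors $b$ of $D$ (of the form $\prod_{i} p_i^{e_i}$ with $0 \le e_i \le p-1$) that can be written as $b = N\alpha$ for some $\alpha \in K^\times$; for $p=3$ these are simply all divisors of $D$. My plan is to formalize the $p$-to-$1$ counting sketched in that paragraph; essentially no new input is required beyond what has already been established.

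First I would invoke Lemma \ref{lem:im(1 - sig) equals norm(alpha)} to replace $\mathrm{im}\varphi$ with the set of $\mathfrak{b} \in Cl_K^G$ for which $N\mathfrak{b} \in N(K^\times)$. This condition is genuinely a class invariant: replacing a representative $\mathfrak{b}$ by $(\beta)\mathfrak{b}$ multiplies $N\mathfrak{b}$ by $N\beta$, which itself is a norm, so the property of being a norm is preserved. Next, every class in $Cl_K^G$ admits representatives in $B$ by the surjection $\phi:(\mathbb{Z}/p\mathbb{Z})^r \twoheadrightarrow Cl_K^G$ from the proof of the preceding proposition; since $|B| = p^r$ and $|Cl_K^G| = p^{r-1}$, the induced map $B \to Cl_K^G$ is precisely $p$-to-$1$. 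Meanwhile the norm map $B \to \mathbb{Z}_{>0}$, $\prod P_i^{e_i} \mapsto \prod p_i^{e_i}$, is injective by unique factorization in $\mathbb{Z}$ (the $p_i$ are distinct rational primes), and its image lies in the divisors of $D$ --- the modification $D = D_K/p^2$ at wildly ramified $p$ is exactly what guarantees that $N(P^e) = p^e$ for $0 \le e \le p-1$ continues to divide $D$ when $p$ is ramified.

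Putting these together, the divisors $b \in N(B)$ group into fibers of size $p$ over $Cl_K^G$ under $b \mapsto [N^{-1}(b)]$, and within each fiber either every element is in $N(K^\times)$ or none is (by the class invariance above). Hence $|\mathrm{im}\varphi| = \tfrac{1}{p} \cdot \#\{b \in N(B) : b = N\alpha \text{ for some } \alpha \in K^\times\}$, as claimed. There is no substantive obstacle: the proof is a short bookkeeping argument combining Lemma \ref{lem:im(1 - sig) equals norm(alpha)}, the structural result $|Cl_K^G| = p^{r-1}$, and the injectivity of $N$ on $B$. The only point requiring care is the compatibility between the $p$-to-$1$ grouping on $B$ and the $p$-to-$1$ grouping on $N(B)$, which is automatic from the injectivity of $N|_B$.
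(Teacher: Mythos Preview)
Your proposal is correct and follows exactly the approach the paper takes: the paper's ``proof'' is precisely the paragraph preceding the proposition, and you have simply filled in the bookkeeping (class-invariance of the norm condition, the $p$-to-$1$ surjection $B\twoheadrightarrow Cl_K^G$ coming from $|Cl_K^G|=p^{r-1}$, and the injectivity of $N$ on $B$). There is nothing missing; if anything, your version is more careful than the paper's in making explicit why the fibers have uniform size $p$ and why the norm condition is constant on fibers.
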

\[
\left|\mathrm{im}\varphi\right|=\frac{1}{p}\left\{ b\mid D\mid b=N\left(\alpha\right)\mbox{ for some }\alpha\in K^{\times}\right\} .
\]

\section{The $p$-torsion as a character sum}

Let $K/\mathbb{Q}$ be a degree $p$ cyclic extension. Then the discriminant
is of the form $D_{K}=\left(p_{1}\cdots p_{r}\right)^{p-1}$ where
each factor is a prime congruent to 1 mod $p$ or equal to $p^{2}$
and they are distinct. Conversely every integer of this form is a
discriminant of a degree $p$ cyclic field. Each such extension corresponds
to a character $\chi$ of $C_{\mathbb{Q}}\cong\prod_{l}\mathbb{Z}_{l}^{\times}$
the idele class group of $\mathbb{Q}$ with $\mathrm{ker}\chi=NC_{K}$
an index $p$ subgroup of $C_{\mathbb{Q}}$. That is, a character
$\chi:\left(1+p\mathbb{Z}_{p}\right)\times\prod_{l\mid D_{K},l\neq p}\mathbb{F}_{l}^{\times}\longrightarrow\mu_{p}$
where the $\left(1+p\mathbb{Z}_{p}\right)$ factor appears only when
$p\mid D_{K}$. The character $\chi$ is non-trivial on each factor.
Furthemore it factors into local components $\chi=\prod_{l\mid D_{K}}\chi_{l}$.

The goal of this section will be to prove 
\begin{thm}
\label{thm:im(1-sigma) is a character sum}For each degree $p$ cyclic
field $K$ let $\sigma_{K}$ denote a generator of the Galois group
and $D_{K}$ the discriminant. Then 

\begin{equation}
\sum_{K,D_{K}<X^{p-1}}\left|\mathrm{im}\left(1-\sigma_{K}\right)\right|=\frac{1}{\left(p-1\right)p}\sum_{D<X}\mu^{2}\left(D\right)\frac{1}{p^{\omega\left(D\right)}}\sum_{D_{0}\cdots D_{p^{2}-1}}\sum_{\left(\chi_{l}\right)}\prod_{v}\prod_{l\mid D_{v_{1}p+v_{2}}}\chi_{l}\left(\prod_{u}D_{u_{1}p+u_{2}}^{\Phi\left(u,v\right)}\right)\label{eq:sum im 1-s}
\end{equation}

where on the right the first sum is over square-free integers whose
prime factors are congruent to $1$ mod $p$ or equal to $p$ and
the second sum is over all such factorizations of $D$ and the third
sum is over all tuples of order $p$ characters $\left(\chi_{p}\right)_{p\mid D}$.
The products are over $u,v\in\mathbb{Z}/p^{2}\mathbb{Z}$, and \textup{$\Phi\left(u_{1}p+u_{2},v_{1}p+v_{2}\right)=u_{1}\left(v_{2}-u_{1}\right)$.}
\end{thm}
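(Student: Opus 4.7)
The plan is to chain the preparatory results of Sections 2 and 3 with a character-theoretic manipulation in the spirit of Fouvry--Kl\"uners. First, I would invoke Proposition \ref{prop: im(1-sigma) equals divisors of D} to replace $|\mathrm{im}(1-\sigma_{K})|$ by $\tfrac{1}{p}\#\{b\mid D:b=N\alpha\text{ for some }\alpha\in K^{\times}\}$, and then reparameterize the outer sum over fields by idele class characters: each degree $p$ cyclic extension of conductor $D$ corresponds to exactly $p-1$ non-trivial order-$p$ idele class characters $\chi=\prod_{\ell\mid D}\chi_{\ell}:C_{\mathbb{Q}}\to\mu_{p}$ (differing by the choice of a generator of $\mu_{p}$), each local component being non-trivial. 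This produces the overall $\tfrac{1}{(p-1)p}$ prefactor and recasts the outer sum as $\sum_{D<X}\mu^{2}(D)\sum_{(\chi_{\ell})}$ over admissible conductors (squarefree, with primes $\equiv 1\pmod p$ or equal to $p$).

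Next, I would detect the norm condition by orthogonality. By the Hasse norm theorem for cyclic extensions, for $b\mid D$ one has $b\in NK^{\times}$ if and only if $\chi_{\ell}(b)=1$ at every ramified place $\ell\mid D$, the condition at the unramified places being automatic since $v_{\ell}(b)=0$ there. The indicator of this joint condition equals $\prod_{\ell\mid D}\tfrac{1}{p}\sum_{a_{\ell}=0}^{p-1}\chi_{\ell}^{a_{\ell}}(b)$, which supplies the factor $\tfrac{1}{p^{\omega(D)}}$ and rewrites $\#\{b\mid D:b\in NK^{\times}\}$ as a double sum $\sum_{b\mid D}\sum_{(a_{\ell})}\prod_{\ell\mid D}\chi_{\ell}^{a_{\ell}}(b)$ over divisors and exponent vectors $(a_{\ell})\in(\mathbb{Z}/p\mathbb{Z})^{\omega(D)}$.

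Finally, I would regroup this sum prime by prime to produce the $p^{2}$-fold factorization of $D$. Each prime $\ell\mid D$ carries two pieces of $\mathbb{Z}/p\mathbb{Z}$-valued data: the exponent $a_{\ell}$ of the detection character, together with a second parameter that combines the divisor indicator $v_{\ell}(b)$ and the contribution $\chi_{\ell}(\ell)^{a_{\ell}v_{\ell}(b)}$ at the uniformizer (this is the step that promotes the na\"ive $\{0,1\}$-indicator into a $\mathbb{Z}/p\mathbb{Z}$-parameter, once the $\ell$-adic component of the character is absorbed into the character sum). Partitioning the primes of $D$ by the resulting pair $u=(u_{1},u_{2})\in(\mathbb{Z}/p\mathbb{Z})^{2}\cong\mathbb{Z}/p^{2}\mathbb{Z}$ yields the factorization $D=\prod_{u}D_{u}$ of the statement; then expanding $\prod_{\ell\mid D}\chi_{\ell}^{a_{\ell}}(b)$ via $b=\prod_{\ell'\mid b}\ell'$ and regrouping the cross-terms $\chi_{\ell}(\ell')$ according to the joint types of $(\ell,\ell')$ rearranges the product into $\prod_{v}\prod_{\ell\mid D_{v}}\chi_{\ell}\bigl(\prod_{u}D_{u}^{\Phi(u,v)}\bigr)$ for an explicit integer exponent $\Phi(u,v)$. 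The principal obstacle is precisely this last bookkeeping step: identifying the correct bilinear combinatorial function and verifying $\Phi(u_{1}p+u_{2},v_{1}p+v_{2})=u_{1}(v_{2}-u_{1})$. A secondary subtlety is the wild place $\ell=p$, where $\chi_{p}$ lives on $1+p\mathbb{Z}_{p}$ rather than on a finite residue field, and where one must check that Hasse detection, orthogonality, and the exponent computation all behave uniformly so that tame and wild ramified primes are handled by a single formula.
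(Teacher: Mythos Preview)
Your overall architecture matches the paper's: start from Proposition~\ref{prop: im(1-sigma) equals divisors of D}, apply the Hasse norm theorem, detect local norms by orthogonality, then reindex primes of $D$ by a pair in $(\mathbb{Z}/p\mathbb{Z})^{2}$. However, your identification of the two $\mathbb{Z}/p\mathbb{Z}$ parameters is off in a way that would prevent you from reaching the stated formula.

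The first issue is the range of divisors. In Proposition~\ref{prop: im(1-sigma) equals divisors of D} the integer $D$ is (essentially) a $(p-1)$-th power, so the divisors $b$ being counted are not squarefree: each ramified prime $\ell$ occurs in $b$ with an exponent $i\in\{0,1,\ldots,p-1\}$. The paper writes $B=b_{1}b_{2}^{2}\cdots b_{p-1}^{p-1}$ with the $b_{i}$ squarefree, and it is this exponent $i=v_{\ell}(B)$ that furnishes the first $\mathbb{Z}/p\mathbb{Z}$ coordinate $u_{1}$. It is already a genuine $\mathbb{Z}/p\mathbb{Z}$-value, not a $\{0,1\}$-indicator to be ``promoted'' by absorbing a uniformizer contribution. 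Your second coordinate, the detection exponent $a_{\ell}$, is correct and corresponds to the paper's $u_{2}$. (For $p=2$ the exponent range collapses to $\{0,1\}$, which may be the source of the over-generalisation from Fouvry--Kl\"uners.)

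The second issue is the local norm detection itself. Writing ``$b\in NK^{\times}$ iff $\chi_{\ell}(b)=1$ for all ramified $\ell$'' is not quite right when $\ell\mid b$, since $\chi_{\ell}$ is a character of $\mathbb{Z}_{\ell}^{\times}$ (or $1+p\mathbb{Z}_{p}$) and $b$ is not a unit there; the expression $\chi_{\ell}(\ell)$ you invoke is likewise undefined in this framework. The paper instead evaluates the global character on the idele $i_{b,\ell}$, obtaining $\chi(b,\ell)=\chi_{\ell}(b/\ell^{i})\prod_{q\neq\ell}\chi_{q}(\ell^{-i})$ with $i=v_{\ell}(b)$. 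It is exactly these cross-terms $\chi_{q}(\ell^{-i})$, together with the factor $\chi_{\ell}(B^{j}/\ell^{ij})$, that after regrouping produce the exponent $\Phi(u,v)=u_{1}(v_{2}-u_{2})$. Without the correct range for $v_{\ell}(b)$ and the correct idele evaluation, the bookkeeping step you flag as the ``principal obstacle'' cannot be carried out to yield this $\Phi$.
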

\begin{proof}
First fix $K$. We start with Proposition \ref{prop: im(1-sigma) equals divisors of D}.
Let $D$ be as defined at the end of the previous section. Since $K$
is cyclic, by the Hasse norm theorem $b$ is a global norm if and
only if $b$ is a local norm everywhere: 
\[
b=N\alpha\mbox{ for some \ensuremath{\alpha\in K}}\iff b=N\alpha_{p}\mbox{ for some \ensuremath{\alpha_{p}\in K_{p}}, for all \ensuremath{p}}.
\]
 Here $K_{p}=K\otimes_{\mathbb{Q}}\mathbb{Q}_{p}$. So we want to
detect when $b\mid D$ is a norm in $K_{p}\cong K\otimes\mathbb{Q}_{p}$
for all $p\mid D$. 

It is a standard fact that $b$ is a local norm at $l$ if and only
if the idele $\left(1,\ldots,b,\ldots,1\right)\in C_{\mathbb{Q}}$
is a global norm from $C_{K}$ (here $b$ is in the coordinate corresponding
to $l$). Under the identification $C_{\mathbb{Q}}\cong\prod\mathbb{Z}_{l}^{\times}$
this is the idele $i_{b,l}=\left(\frac{1}{l^{i}},\ldots,\frac{b}{l^{i}},\ldots,\frac{1}{l^{i}}\right)$,
where $i$ is maximal such that $l^{i}\mid b$. Recall that $NC_{K}=\mathrm{ker}\chi$.
Hence $b$ is a local norm at $l$ if and only if $i_{b,l}$ is in
the kernel of the character $\chi$ as described above: 
\[
\chi\left(i_{b,l}\right)=\chi_{l}\left(\frac{b}{l^{i}}\right)\prod_{q\neq l}\chi_{q}\left(\frac{1}{l^{i}}\right)=1.
\]
We write $\chi\left(b,l\right)=\chi\left(i_{b,l}\right)$. Hence we
will use the following expression to detect when $b$ is a local norm:
\[
\left(\frac{1+\chi+\cdots+\chi^{p-1}}{p}\right)\left(b,l\right)=\begin{cases}
1 & \mbox{if \ensuremath{b} is a norm at \ensuremath{l}}\\
0 & \mbox{else}
\end{cases}.
\]
Note that $D$ is a $p-1$ power. Write a divisor of $D$ as $b_{1}b_{2}^{2}\cdots b_{p-1}^{p-1}$
where the $b_{i}$ are square free. Thus 
\[
p\left|\mathrm{im}\left(1-\sigma_{K}\right)\right|=\sum_{b_{1}b_{2}^{2}\cdots b_{p-1}^{p-1}\mid D}\left\{ b_{1}b_{2}^{2}\cdots b_{p-1}^{p-1}\mbox{ is a global norm}\right\} 
\]
In the following we will let $B=b_{1}b_{2}^{2}\cdots b_{p-1}^{p-1}$.
Using the above we get that this is equal to 

\begin{eqnarray*}
\\
\sum_{D=\left(b_{0}\cdots b_{p-1}\right)^{p-1}}\prod_{l\mid D}\left(\frac{1+\chi+\cdots+\chi^{p-1}}{p}\right)\left(B,l\right)\\
=\frac{1}{p^{\omega\left(D\right)}}\sum_{D=\left(b_{0}\cdots b_{p-1}\right)^{p-1}}\prod_{i}\prod_{l\mid b_{i}}\left(1+\cdots+\chi^{p-1}\right)\left(B,l\right)
\end{eqnarray*}
Further expanding gives

\begin{eqnarray*}
\frac{1}{p^{\omega\left(D\right)}}\sum_{D=\left(b_{0}\cdots b_{p-1}\right)^{p-1}}\prod_{i=0}^{p-1}\left(\sum_{b_{i}=D_{ip}D_{ip+1}\cdots D_{ip+p-1}}\prod_{j=0}^{p-1}\prod_{p\mid D_{ip+j}}\chi\left(B^{j},l\right)\right).
\end{eqnarray*}
By definition of $\chi\left(B^{j},l\right)$ this can be rewritten
as 
\[
\frac{1}{p^{\omega\left(D\right)}}\sum_{D_{0}\cdots D_{p^{2}-1}}\prod_{i=0}^{p-1}\prod_{j=0}^{p-1}\prod_{l\mid D_{ip+j}}\prod_{q\mid D,q\neq l}\chi_{q}\left(\frac{1}{l^{ij}}\right)\chi_{l}\left(\frac{B^{j}}{l^{ij}}\right).
\]
 where the first sum is over all such factorizations into coprime
integers of $D^{1/p-1}$. Further rearranging:

\[
\frac{1}{p^{\omega\left(D\right)}}\sum_{D_{0}\cdots D_{p^{2}-1}}\prod_{i=0}^{p-1}\prod_{j=0}^{p-1}\left[\prod_{q\mid D/D_{ip+j}}\chi_{q}\left(\prod_{l\mid D_{ip+j}}\frac{1}{l^{ij}}\right)\right]\left[\prod_{q\mid D_{ip+j}}\chi_{q}\left(\prod_{l\mid D_{ip+j}/q}\frac{1}{l^{ij}}\right)\right]\left[\prod_{l\mid D_{ip+j}}\chi_{l}\left(\frac{B^{j}}{l^{ij}}\right)\right]
\]

\[
=\frac{1}{p^{\omega\left(D\right)}}\sum_{D_{0}\cdots D_{p^{2}-1}}\prod_{i=0}^{p-1}\prod_{j=0}^{p-1}\left[\prod_{q\mid D/D_{ip+j}}\chi_{q}\left(\frac{1}{D_{ip+j}^{ij}}\right)\right]\left[\prod_{q\mid D_{ip+j}}\chi_{q}\left(\frac{q^{ij}}{D_{ip+j}^{ij}}\right)\right]\left[\prod_{q\mid D_{ip+j}}\chi_{q}\left(\frac{B^{j}}{q^{ij}}\right)\right].
\]
Now let $\overline{D_{j}}=\prod_{i=0}^{p-1}D_{ip+j}$ and $\overline{D}=\prod_{i,j}D_{ip+j}^{ij}$.
Then the above is 

\begin{eqnarray*}
\frac{1}{p^{\omega\left(D\right)}}\sum_{D_{0}\cdots D_{p^{2}-1}}\prod_{j=0}^{p-1}\prod_{l\mid\overline{D_{j}}}\chi_{l}\left(\frac{B^{j}}{\overline{D}}\right).
\end{eqnarray*}
From the definition of $B$ we have 
\[
B=\prod_{i=0}^{p-1}\left(D_{ip}D_{ip+1}\cdots D_{ip+p-1}\right)^{i}
\]
hence the exponent of $D_{u_{1}p+u_{2}}$ in $B^{v_{2}}/\overline{D}$
is 
\[
u_{1}v_{2}-u_{1}u_{2}=u_{1}\left(v_{2}-u_{2}\right).
\]
 Let $u=u_{1}p+u_{2}$ and $v=v_{1}p+v_{2}$. Define a map on $\mathbb{Z}/p^{2}\mathbb{Z}$
\[
\Phi\left(u,v\right)=u_{1}\left(v_{2}-u_{2}\right).
\]
 Thus we conclude that 
\begin{equation}
\left|\mathrm{im}\left(1-\sigma_{K}\right)\right|=\frac{1}{p}\frac{1}{p^{\omega\left(D\right)}}\sum_{D_{0}\cdots D_{p^{2}-1}}\prod_{v}\prod_{l\mid D_{v_{1}p+v_{2}}}\chi_{l}\left(\prod_{u}D_{u}^{\Phi\left(u,v\right)}\right).\label{eq:expthistok-1}
\end{equation}

Summing over all characters corresponds to summing over all degree
$p$ cyclic fields of discriminant $D$ but overcounts by a factor
of $p-1$ since for a fixed discriminant $D$ the characters $\prod_{l\mid D}\chi_{l}$
and $\prod_{l\mid D}\chi_{l}^{j}$ for $0\le j\le p-1$ correspond
to the same field (indeed the above expression does not change if
we replace each $\chi_{l}$ with $\chi_{l}^{j}$ simultaneously).
Thus we have shown 

\[
\sum_{K,D_{K}=D}\left|\mathrm{im}\left(1-\sigma_{K}\right)\right|=\frac{1}{\left(p-1\right)p}\frac{1}{p^{\omega\left(D\right)}}\sum_{D_{0}\cdots D_{p^{2}-1}}\sum_{\left(\chi_{l}\right)}\prod_{v}\prod_{l\mid D_{v}}\chi_{l}\left(\prod_{u}D_{u}^{\Phi\left(u,v\right)}\right).
\]
where the second sum on the right is over all tuples of characters
$\left(\chi_{p}\right)_{p\mid D}$. Since we are interested in computing
the average over all degree $p$ Galois fields we sum over these (up
to $X^{p-1}$) to get 

\[
\sum_{K,D_{K}<X^{p-1}}\left|\mathrm{im}\left(1-\sigma_{K}\right)\right|=\frac{1}{\left(p-1\right)p}\sum_{D<X}\mu^{2}\left(D\right)\frac{1}{p^{\omega\left(D\right)}}\sum_{D_{0}\cdots D_{p^{2}-1}}\sum_{\left(\chi_{l}\right)}\prod_{v}\prod_{l\mid D_{v}}\chi_{l}\left(\prod_{u}D_{u}^{\Phi\left(u,v\right)}\right).
\]
which proves Theorem \ref{thm:im(1-sigma) is a character sum}. 
\end{proof}

\section{\label{sec:expression for kth moment}An expression for the $k$th
moment}

Define 
\[
S_{k}\left(X\right)=\sum_{K,D_{K}<X^{p-1}}\left|\mathrm{im}\left(1-\sigma_{K}\right)\right|^{k}.
\]
 We want to generalize Theorem \ref{thm:im(1-sigma) is a character sum}
to obtain a similar expression for $S_{k}\left(X\right)$.

We follow the same method as Fouvry and Kl$\ddot{\text{u}}$ners,
to write the $k$ factorizations of $D$ as
\[
D=\prod_{u_{1}}D_{u_{1}}^{\left(1\right)}=\cdots=\prod_{u_{k}}D_{u_{k}}^{\left(k\right)}
\]
 where each index $u_{i}\in\mathbb{Z}/p^{2}\mathbb{Z}$ (note this
differs from the notation in the previous section) . From this we
obtain a further factorization of each $D_{u_{l}}^{\left(l\right)}$
by
\[
D_{u_{l}}^{\left(l\right)}=\prod_{1\le i\le k,i\neq l}\prod_{u_{i}}\gcd\left(D_{u_{1}}^{\left(1\right)},\ldots,D_{u_{k}}^{\left(k\right)}\right).
\]
 Define 
\[
D_{u_{1},\ldots,u_{k}}=\gcd\left(D_{u_{1}}^{\left(1\right)},\ldots.D_{u_{k}}^{\left(k\right)}\right).
\]
 Hence taking \ref{eq:expthistok-1} to the $k$th power we get 
\[
\frac{1}{p^{k}\cdot p^{k\omega\left(D\right)}}\sum\cdots\sum\prod_{\left(v_{1},\ldots,v_{k}\right)}\prod_{l\mid D_{v_{1},\ldots,v_{k}}}\chi_{l}\left(\prod_{\left(u_{1},\ldots,u_{k}\right)}D_{u_{1},\ldots,u_{k}}^{\Phi\left(u_{1},v_{1}\right)+\cdots+\Phi\left(u_{k},v_{k}\right)}\right)
\]
 where there are $k$ sums and each is over all factorizations of
$D$. To simplify notation we let $u=\left(u_{1},\ldots,u_{k}\right)$,
$v=\left(v_{1},\ldots,v_{k}\right)$, and $\Phi_{k}\left(u,v\right)=\sum_{i}\Phi\left(u_{i},v_{i}\right)$.
Then the expression becomes 
\[
\frac{1}{p^{k}\cdot p^{k\omega\left(D\right)}}\sum_{\left(D_{u}\right)}\prod_{\left(v\right)}\prod_{l\mid D_{v}}\chi_{l}\left(\prod_{\left(u\right)}D_{u}^{\Phi_{k}\left(u,v\right)}\right)
\]
where the sum is over $p^{2k}$-tuples of integers with 
\[
\prod_{u}D_{u}=D.
\]
 Now we sum over all characters to get 
\[
S_{k}\left(X\right)=\frac{1}{\left(p-1\right)\cdot p^{k}\cdot p^{k\omega\left(D\right)}}\sum_{\left(D_{u}\right)}\sum_{\left(\chi_{l}\right)}\prod_{\left(v\right)}\prod_{l\mid D_{v}}\chi_{l}\left(\prod_{\left(u\right)}D_{u}^{\Phi_{k}\left(u,v\right)}\right).
\]
Finally we sum over all $D<X$ such that $D^{p-1}$ is a discriminant
of a degree $p$ cyclic field which changes the above sum to be over
$p^{2k}$-tuples of integers whose prime factors are congruent to
$1$ mod $p$ or equal to $p$ and $\prod_{u}D_{u}<X$.

Thus we have proven 
\begin{prop}
\label{prop:mainexpression}For any $k$,
\[
\sum_{K,D_{K}<X^{p-1}}\left|\mathrm{im}\left(1-\sigma_{K}\right)\right|^{k}=\frac{1}{\left(p-1\right)\cdot p^{k}}\sum_{\left(D_{u}\right)}\sum_{\left(\chi_{l}\right)}\frac{\mu^{2}\left(\prod D_{u}\right)}{p^{k\omega\left(\prod D_{u}\right)}}\prod_{v}\prod_{l\mid D_{v}}\chi_{l}\left(\prod_{u}D_{u}^{\Phi_{k}\left(u,v\right)}\right)
\]
where the first sum on the right is over $p^{2k}$-tuples of integers
which are coprime and whose prime factors are congruent to $1$ mod
$p$ or equal to $p$ and $\prod_{u}D_{u}<X$ and the second sum is
over all tuples of order $p$ characters $\left(\chi_{p}\right)_{p\mid\prod D_{u}}$
and $u$ and $v$ run over $\left(\mathbb{Z}/p^{2}\mathbb{Z}\right)^{k}$. 
\end{prop}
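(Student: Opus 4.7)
The plan is to start from equation (\ref{eq:expthistok-1}), which expresses $|\mathrm{im}(1-\sigma_K)|$ for a single field $K$ as a sum over factorizations of $D$ indexed by $u\in\mathbb{Z}/p^2\mathbb{Z}$. Raising both sides to the $k$-th power yields $k$ independent copies of the inner sum, each over a factorization $D=\prod_{u_l}D^{(l)}_{u_l}$ for $l=1,\ldots,k$, and a product of $k$ character evaluations with exponents of the form $\Phi(u_l,v_l)$.

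First I would rewrite these $k$ simultaneous factorizations as a single factorization by their common refinement. Since $D$ is squarefree with all prime factors $\equiv 1 \pmod p$ or equal to $p$ (so $\mu^2(D)=1$ on the support), the integers $D^{(l)}_{u_l}$ are determined by partitioning the prime divisors of $D$, and specifying all $k$ factorizations simultaneously is equivalent to specifying a single partition of the prime divisors of $D$ into $p^{2k}$ classes indexed by tuples $(u_1,\ldots,u_k)\in(\mathbb{Z}/p^2\mathbb{Z})^k$. This motivates the definition $D_{u_1,\ldots,u_k}=\gcd(D^{(1)}_{u_1},\ldots,D^{(k)}_{u_k})$, so that $D^{(l)}_{u_l}=\prod_{i\neq l}\prod_{u_i}D_{u_1,\ldots,u_k}$ and $\prod_uD_u=D$. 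Substituting back, the exponent of $D_u$ appearing inside $\chi_l$ for a fixed $v=(v_1,\ldots,v_k)$ adds up linearly across the $k$ factors to $\Phi_k(u,v)=\sum_i\Phi(u_i,v_i)$, by additivity of exponents in the product of the $k$ character evaluations.

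Next I would sum over all degree $p$ cyclic fields $K$ of a fixed discriminant $D^{p-1}$, which corresponds to summing over all nontrivial character tuples $(\chi_l)_{l\mid D}$. Exactly as in the proof of Theorem \ref{thm:im(1-sigma) is a character sum}, each field is counted $p-1$ times because replacing every $\chi_l$ by $\chi_l^j$ simultaneously gives the same field but leaves the expression invariant (the substitution multiplies the inner product by $\chi_l^j$ applied to an integer whose exponent pattern is unchanged by the relabeling). Hence dividing by $p-1$ recovers the correct count, which explains the overall $1/((p-1)p^k)$ prefactor together with the $p^{-k}$ from raising (\ref{eq:expthistok-1}) to the $k$-th power.

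Finally I would sum over squarefree $D<X$ whose prime factors are $\equiv 1\pmod p$ or equal to $p$, absorb the conductor condition into the factor $\mu^2(\prod D_u)$, and note that the constraint $\prod D_u < X$ together with coprimality of the $D_u$ encodes exactly that $D=\prod_u D_u$ runs over admissible discriminants. The main bookkeeping obstacle is verifying the exponent computation in the refinement step: one must check that the exponent of $D_u$ inside $\chi_l$ indexed by $v$ really is $\Phi_k(u,v)$ and not a permutation or rescaling thereof, which reduces to the single-factor identity from equation (\ref{eq:expthistok-1}) applied coordinate by coordinate. Once this is verified, the proposition follows by assembling the pieces.
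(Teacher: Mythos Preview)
Your proposal is correct and follows essentially the same route as the paper: take the $k$-th power of (\ref{eq:expthistok-1}) for a fixed $K$, pass to the common refinement $D_{u_1,\ldots,u_k}=\gcd(D_{u_1}^{(1)},\ldots,D_{u_k}^{(k)})$ to combine the $k$ factorizations into a single $p^{2k}$-tuple, observe that the exponents add to $\Phi_k(u,v)=\sum_i\Phi(u_i,v_i)$, then sum over character tuples (with the $1/(p-1)$ overcount) and over admissible $D<X$. The only item you leave implicit is that the factor $p^{-k\omega(D)}$ arises from raising the $p^{-\omega(D)}$ in (\ref{eq:expthistok-1}) to the $k$-th power, but this is automatic once you say ``raise both sides to the $k$-th power''.
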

Note we can alternately view the indices $u$ in $\left(\mathbb{F}_{p}\times\mathbb{F}_{p}\right)^{k}$.
We will adopt this notation in Section \ref{sec:Computing-the--th}.

The goal will now be to separate this expression into a main term
and an error term where the error term is $o\left(X\right)$. The
significance of $X$ is that it is asymptotically the number of degree
$p$ cyclic fields of discriminant bounded by $X^{p-1}$.

\section{Analytics Tools}

We list the analytic results that will be needed in the sequel. The
first two we take directly from \cite{fk1}.
\begin{lem}
\label{lem:There-exists-an}There exists an absolute constant $B_{0}$,
such that for every $X\ge3$ and every $l\ge0$ we have 
\[
\left|\left\{ n\le X\mid\omega\left(n\right)=l,\mu^{2}\left(n\right)=1\right\} \right|\le B_{0}\frac{X}{\log X}\frac{\left(\log\log X+B_{0}\right)^{l}}{l!}.
\]

\begin{lem}
\label{lem:Let--with}Let $\gamma\in\mathbb{R}$ with $\gamma>0$.
Then we have 
\end{lem}
\[
\sum_{X-Y<n<X}\gamma^{\omega\left(n\right)}\ll Y\left(\log X\right)^{\gamma-1}
\]
 for $2\le X\exp\left(-\sqrt{\log X}\right)\le Y\le X$.
\end{lem}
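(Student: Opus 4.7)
The estimate is a squarefree version of the classical Hardy--Ramanujan inequality for the count of integers with a prescribed number of prime factors. My plan is to prove it by Rankin's trick applied to the generating Dirichlet series
\[
F(s;z)=\sum_{n\ge 1}\frac{\mu^{2}(n)z^{\omega(n)}}{n^{s}}=\prod_{p}\left(1+\frac{z}{p^{s}}\right)=\zeta(s)^{z}G(s;z),
\]
where $G(s;z)=\prod_{p}(1+zp^{-s})(1-p^{-s})^{z}$ converges absolutely and is bounded for $\Re(s)\ge 1-\delta$, uniformly for $z$ in any bounded interval $[0,Z]$. This factorization isolates the singular behavior at $s=1$ as a branch point of $\zeta(s)^{z}$.

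The argument then has three steps. First, the Selberg--Delange theorem (a contour shift of Perron's formula around $s=1$ using the factorization above) yields the uniform estimate
\[
\sum_{n\le X}\mu^{2}(n)z^{\omega(n)}\ \ll_{Z}\ \frac{X(\log X)^{z-1}}{\Gamma(z)}\qquad(0<z\le Z).
\]
Second, since every $n$ counted by $N_{l}(X):=|\{n\le X:\omega(n)=l,\ \mu^{2}(n)=1\}|$ contributes exactly $z^{l}$ and all remaining summands are non-negative for $z\ge 0$, Rankin's trick gives
\[
N_{l}(X)\ \le\ z^{-l}\sum_{n\le X}\mu^{2}(n)z^{\omega(n)}.
\]
Third, I would optimize in $z$: the near-optimal choice is $z\asymp l/\log\log X$ (clipped below at a fixed positive value when this falls below $1$, a regime handled separately by taking $z=1$ and using $F(s;1)=\zeta(s)/\zeta(2s)$, which gives $N_{l}(X)\ll X/\log X$ directly). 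This choice makes $(\log X)^{z}=e^{O(l)}$, and Stirling's formula applied to both $\Gamma(z)$ and $l!$ then yields a bound of the form
\[
N_{l}(X)\ \ll\ \frac{X}{\log X}\cdot\frac{(\log\log X+B_{0})^{l}}{l!},
\]
for a suitable absolute constant $B_{0}$, which absorbs Stirling's $\sqrt{l}$ factor, the boundary-regime contributions, and small-$X$ cases.

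The main obstacle is Step~1: establishing the estimate with a constant uniform in $z$ across a bounded interval. This is exactly where the Selberg--Delange machinery must be invoked carefully; standard quantitative versions (e.g., Tenenbaum, \emph{Introduction to Analytic and Probabilistic Number Theory}, Ch.~II.5) provide the required uniformity. An alternative purely elementary route is induction on $l$ via the identity $l\cdot N_{l}(X)\le\sum_{p\le X}N_{l-1}(X/p)$, combined with Mertens' theorem $\sum_{p\le X}1/p\le\log\log X+M_{0}$ and a split of the prime sum at $\sqrt{X}$ (the contribution from $p>\sqrt{X}$ counts each $n\le X$ with $\omega(n)=l$ at most once via its largest prime factor, so it is bounded directly by $N_{l}(X)$ and can be absorbed back to the left-hand side). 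This route sidesteps contour integration but requires more delicate bookkeeping of constants to propagate the inductive hypothesis with a fixed $B_{0}$.
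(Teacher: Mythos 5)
The paper does not prove either inequality; both are imported verbatim from Fouvry--Kl\"uners \cite{fk1} (the text immediately before the statement reads ``The first two we take directly from \cite{fk1}''), so there is no in-paper argument to compare your proposal against.

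For the first inequality your outline is correct in substance. The Rankin/Selberg--Delange route is sound, with the caveat you already flag: uniformity of the implied constant in $\sum_{n\le X}\mu^{2}(n)z^{\omega(n)}\ll X(\log X)^{z-1}/\Gamma(z)$ holds on compact subsets of $(0,\infty)$ but degenerates as $z\to 0^{+}$, which is precisely why the small-$l$ regime must be clipped off and handled separately as you indicate. The alternative inductive route via $l\cdot N_{l}(X)\le\sum_{p\le X}N_{l-1}(X/p)$, Mertens' bound $\sum_{p\le X}p^{-1}\le\log\log X+M_{0}$, and a split of the prime sum at $\sqrt{X}$ to absorb the self-referential tail, is the classical Hardy--Ramanujan argument and is the cleaner way to secure an explicit absolute constant $B_{0}$. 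Either path closes.

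The genuine gap is that the statement you were asked to prove contains two lemmas, and you have addressed only one. You say nothing about $\sum_{X-Y<n<X}\gamma^{\omega(n)}\ll Y(\log X)^{\gamma-1}$ in the range $X\exp(-\sqrt{\log X})\le Y\le X$. This is a short-interval mean value of a multiplicative function and is not a corollary of the first bound: the permitted interval length $Y$ may lie far below $X/\log X$, so subtracting $\sum_{n\le X}$ and $\sum_{n\le X-Y}$ using only the crude Selberg--Delange error $O(X(\log X)^{\gamma-2})$ does not close. One needs either the sharper error term $O\bigl(X(\log X)^{\gamma-1}\exp(-c\sqrt{\log X})\bigr)$ that comes from pushing the Perron contour into the classical zero-free region of $\zeta$, or a Shiu-type Brun--Titchmarsh theorem for multiplicative functions in short intervals. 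Your proposal is silent on this and as written does not establish the full statement.
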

Let $\mathcal{O}=\mathbb{Z}\left[\zeta_{3}\right]$, the ring of integers
of the quadratic extension $\mathbb{Q}\left(\zeta_{3}\right)$. Let
$\left(\frac{x}{y}\right)_{3}$ denote the cubic residue symbol for
$x,y\in\mathcal{O}$ coprime. 

We will need the following results for estimating bilinear sums. They
are all versions of the large sieve inequality. The first two containing
the $\left(MN\right)^{\epsilon}$-type factor will be used when $M$
and $N$ are close together, and the latter two which do not contain
this factor will be used when $M$ and $N$ are far apart. The first
is Theorem 2 from \cite{heathbrowncubicguasssums}.
\begin{prop}
\label{prop:bs cubic epsilon }Let $c_{n}$ be a sequence of complex
numbers indexed by elements of $\mathcal{O}$. Then for any $\epsilon>0$
\[
\sum_{N\left(m\right)\le M}\left|\sum_{N\left(n\right)\le N}\mu^{2}\left(N\left(n\right)N\left(m\right)\right)c_{n}\left(\frac{n}{m}\right)_{3}\right|^{2}\ll_{\epsilon}\left(M+N+\left(MN\right)^{2/3}\right)\left(MN\right)^{\epsilon}\sum_{n}\left|c_{n}\right|^{2}
\]
where the sums are over elements of $\mathcal{O}$ congruent to 1
mod 3. 
\end{prop}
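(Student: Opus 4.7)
The plan is to follow Heath-Brown's approach from \cite{heathbrowncubicguasssums}, which combines duality, cubic reciprocity, and the non-trivial cube-root cancellation in sums of cubic residue symbols.

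First I would invoke the duality principle for bilinear forms, reducing the stated inequality to an estimate of the shape
\[
\sum_{N(n)\le N}\left|\sum_{N(m)\le M}a_m\,\mu^2(N(m)N(n))\left(\frac{n}{m}\right)_3\right|^2 \ll_\epsilon (M+N+(MN)^{2/3})(MN)^\epsilon \sum_m |a_m|^2.
\]
Opening the square and interchanging summations yields
\[
\sum_{m_1,m_2}a_{m_1}\overline{a_{m_2}}\sum_{N(n)\le N}\mu^2(N(n)N(m_1m_2))\left(\frac{n}{m_1}\right)_3 \overline{\left(\frac{n}{m_2}\right)_3}.
\]
The diagonal contribution from $m_1=m_2$ is bounded trivially by $N\sum_m|a_m|^2$, producing the $N$ (and symmetrically, after reverting duality, the $M$) factor in the final estimate.

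The bulk of the work lies in the off-diagonal terms. Writing $m_1\overline{m_2}=\ell^{3}k$ with $k$ cube-free, the inner character sum becomes, up to a bounded factor coming from $\ell$, a sum of the cubic residue symbol $n\mapsto(n/k)_3$ over primary $n\in\mathcal{O}$ with $N(n)\le N$. Since the $m_i$ are primary, cubic reciprocity allows this to be rewritten as a non-principal Hecke character sum modulo $k$. When $N(k)$ is small compared with $N$, a Polya-Vinogradov type estimate for Hecke characters on $\mathbb{Z}[\zeta_3]$ gives cancellation of size $O(\sqrt{N(k)}\log N(k))$. When $N(k)$ is large, one needs the deeper analytic input: the bound on cubic Gauss sums that provides a saving of size $N(k)^{2/3}$ rather than the trivial $N(k)$, obtained through the analytic continuation of the Kubota-type Dirichlet series $\sum_n (n/k)_3 N(n)^{-s}$ via metaplectic Eisenstein series.

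The final step is to balance these two regimes by summing dyadically over the size of $k$, applying Cauchy--Schwarz to the coefficients $a_{m_1}\overline{a_{m_2}}$, and absorbing divisor-type losses into the $(MN)^\epsilon$ factor. The $(MN)^{2/3}$ term in the statement is the direct image of the cube-root cancellation in individual Gauss sums; without this input one would obtain only the trivial $M+N+MN$ bound, which is useless for the applications in later sections. The main obstacle is therefore precisely this cubic Gauss sum estimate, which has no elementary proof and rests on the metaplectic analysis of cubic theta functions. Since Proposition \ref{prop:bs cubic epsilon } is quoted verbatim from \cite{heathbrowncubicguasssums}, we do not reproduce these arguments here and simply appeal to Heath-Brown's theorem.
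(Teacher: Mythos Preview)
The paper does not prove this proposition at all; it is stated as Theorem~2 of \cite{heathbrowncubicguasssums} and simply quoted, with no argument given. Your final sentence---to appeal directly to Heath-Brown's theorem---is therefore exactly what the paper does, and nothing more is required.

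Your accompanying sketch captures the right ingredients (duality, cubic reciprocity, and the deep input from cubic Gauss sums via Patterson's metaplectic analysis), but it does not quite reflect the structure of Heath-Brown's actual proof. Heath-Brown does not proceed by opening the square once and splitting into small/large $N(k)$ regimes with a Polya--Vinogradov bound on one side; rather, he sets up a recursive inequality for the optimal large-sieve constant $B(M,N)$, bounding it in terms of $B$ evaluated at smaller parameters, and iterates. The $(MN)^{2/3}$ term emerges from this bootstrap together with Patterson's average estimate for cubic Gauss sums, not from a single pointwise ``saving of size $N(k)^{2/3}$'' in an individual sum (the individual Gauss sum has modulus $N(k)^{1/2}$, not $N(k)^{2/3}$). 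None of this affects the validity of your proposal, since you ultimately defer to the reference, but if you wish to keep the sketch it would be more accurate to describe the argument as a recursive large-sieve estimate driven by the distribution of cubic Gauss sums.
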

Next we have a version for cubic Dirichlet characters and sums over
integers, which is Theorem 1.4 from \cite{StephanBaierMatthewYoung}.
\begin{prop}
\label{prop:bs dirichlet epsilon}Let $c_{n}$ be a sequence of complex
numbers. Then for any $\epsilon>0$ 
\[
\sum_{Q<q<2Q}\sum_{\chi\mathrm{mod}q}\left|\sum_{M<m<2M}a_{m}\mu^{2}\left(m\right)\chi\left(m\right)\right|^{2}\ll_{\epsilon}\left(Q^{11/9}+Q^{2/3}M\right)\left(QM\right)^{\epsilon}\sum_{m}\left|a_{m}\right|^{2}
\]
where the $\chi$ are primitive Dirichlet characters satisfying $\chi^{3}=1$. 
\end{prop}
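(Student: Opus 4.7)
The plan is to prove this cubic-character analog of the large sieve inequality by pulling the Dirichlet characters back to the Eisenstein integers $\mathcal{O}=\mathbb{Z}[\zeta_{3}]$ and then invoking Heath-Brown's cubic large sieve (Proposition~\ref{prop:bs cubic epsilon}). A primitive Dirichlet character of order $3$ modulo $q$ exists only when every prime divisor of $q$ is $\equiv 1\pmod 3$ (ignoring the degenerate $q=9$ case), and each such prime splits in $\mathcal{O}$ as $p=\pi\bar{\pi}$. Consequently every primitive cubic $\chi$ modulo $q$ is of the form $n\mapsto\bigl(\tfrac{n}{\mathfrak{n}}\bigr)_{3}$ for exactly two ideals $\mathfrak{n}\subset\mathcal{O}$ of norm $q$, obtained by choosing one prime above each rational $p\mid q$. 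Under this correspondence the inner sum in the statement becomes a cubic residue sum over $\mathcal{O}$ with $m$ restricted to rational integers.

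First I would apply Proposition~\ref{prop:bs cubic epsilon} directly, with $c_{n}=a_{n}\mu^{2}(n)\mathbf{1}_{n\in\mathbb{Z}}$ and $\mathfrak{n}$ ranging over ideals of norm in $[Q,2Q]$. The multiplicity of roughly $2^{\omega(q)}$ choices of $\mathfrak{n}$ for each $q$ is absorbed into a $(MQ)^{\varepsilon}$ factor, and what results is a bound of the shape $(M+Q+(MQ)^{2/3})(MQ)^{\varepsilon}\sum|a_{m}|^{2}$. The $(MQ)^{2/3}$ term gives the desired $Q^{2/3}M$ contribution after the $\varepsilon$ absorbs polynomial factors in $M$, and the $M$ term is dominated by it.

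The harder task, and the main obstacle, is replacing the leading $Q$ term, which is insufficient when $M$ is small compared to $Q$, by the stronger $Q^{11/9}$ bound. Here I would follow the Baier--Young strategy: expand the square, swap orders of summation to obtain a dual form involving sums $\sum_{\chi}\chi(m_{1})\overline{\chi(m_{2})}$, and apply Poisson summation in the $q$-variable to reduce to mean-square estimates for cubic Gauss sums of moduli near $Q$. Applying Heath-Brown's sieve in its dual range to these Gauss sums saves a power of $Q^{1/9}$ over the trivial estimate. Finally one interpolates between the direct and dual applications via a dyadic decomposition of the $m$-range, carefully tracking the squarefreeness and primitivity constraints; the $11/9$ exponent emerges from optimizing this interpolation. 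The $(MQ)^{\varepsilon}$ loss is inherent to current techniques, and its absence in the available higher-order analogs is precisely the obstruction that forces the GRH assumption for $p>3$ in Theorem~\ref{thm:mainthm}.
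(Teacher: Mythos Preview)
The paper does not prove this proposition at all; it is simply quoted as Theorem~1.4 of Baier--Young \cite{StephanBaierMatthewYoung} and used as a black box. So there is no proof in the paper to compare against, and what you have written is in effect a sketch of the Baier--Young argument itself. At that level your outline is accurate: one lifts primitive cubic Dirichlet characters to cubic residue symbols on $\mathcal{O}=\mathbb{Z}[\zeta_{3}]$, and the exponent $11/9$ ultimately comes from feeding Heath-Brown's cubic large sieve into a Poisson/Gauss-sum dualisation and optimising.

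One genuine slip in your first step deserves flagging. A rational integer $m$ viewed as an element of $\mathcal{O}$ has norm $m^{2}$, so if you apply Proposition~\ref{prop:bs cubic epsilon } with $c_{n}$ supported on rational integers $m\in(M,2M)$ you must take the inner norm bound to be about $M^{2}$, not $M$. The resulting estimate is then
\[
\bigl(Q+M^{2}+(QM^{2})^{2/3}\bigr)(QM)^{\varepsilon}\sum_{m}|a_{m}|^{2},
\]
and the term $Q^{2/3}M^{4/3}$ is strictly worse than the claimed $Q^{2/3}M$ for $M>1$; it is not absorbed by the $\varepsilon$. Baier and Young do not proceed by this naive embedding. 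They dualise first and apply Heath-Brown's sieve on the Gauss-sum side, where both variables are genuinely Eisenstein integers of the correct size; only then does the $(MQ)^{2/3}$ term appear with the right exponents. Your second paragraph is closer to what actually happens, but the ``direct application'' in the first paragraph does not yield the bound you state.
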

The next version is from \cite{IwaniecKowalski} and applies to all
Dirichlet characters.
\begin{prop}
\label{prop:bs dirichlet large}Let $c_{n}$ be a sequence of complex
numbers. Then for any $\epsilon>0$ 
\[
\sum_{q<Q}\sum_{\chi\mathrm{mod}q}\left|\sum_{M<m<2M}a_{m}\mu^{2}\left(m\right)\chi\left(m\right)\right|^{2}\ll_{\epsilon}\left(Q^{2}+M\right)\sum_{m}\left|a_{m}\right|^{2}
\]
 where the $\chi$ are primitive Dirichlet characters.
\end{prop}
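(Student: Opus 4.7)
The plan is to reduce this multiplicative large sieve to the additive large sieve via Gauss sums, following the classical Montgomery--Vaughan approach. First I would recall the additive inequality
\begin{equation*}
\sum_{q < Q} \sum_{\substack{a=1 \\ (a,q)=1}}^{q} \left|\sum_m b_m \, e(am/q)\right|^2 \ll (Q^2 + M) \sum_m |b_m|^2
\end{equation*}
valid for arbitrary complex coefficients $b_m$ supported on $(M,2M]$, where $e(\alpha) := e^{2\pi i \alpha}$. This follows by a duality argument from the fact that the Farey fractions of order $Q$ are $\gg Q^{-2}$-spaced in $\mathbb{R}/\mathbb{Z}$, combined with a Selberg-type majorant.

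To pass from additive to multiplicative, I would use that for any primitive character $\chi \bmod q$ the Gauss sum satisfies $|\tau(\bar\chi)|^2 = q$ and
\begin{equation*}
\chi(m) \;=\; \frac{1}{\tau(\bar\chi)} \sum_{a \bmod q} \bar\chi(a) \, e(am/q).
\end{equation*}
Setting $S(a,q) := \sum_m a_m \mu^2(m) e(am/q)$ and $T(\chi) := \sum_m a_m \mu^2(m) \chi(m)$, this identity gives $|T(\chi)|^2 = q^{-1} \bigl|\sum_{(a,q)=1} \bar\chi(a)\, S(a,q)\bigr|^2$. Summing over primitive $\chi \bmod q$, expanding the square, and invoking orthogonality of characters (after a standard Möbius inversion over divisors of $q$ to separate primitive from imprimitive contributions) yields
\begin{equation*}
\sum_{\chi \bmod q}^{*} |T(\chi)|^2 \;\leq\; \frac{\varphi(q)}{q} \sum_{\substack{a \bmod q \\ (a,q)=1}} |S(a,q)|^2.
\end{equation*}

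Finally I would sum over $q < Q$, use $\varphi(q)/q \leq 1$ to drop the weight, and apply the additive large sieve with $b_m = a_m \mu^2(m)$; since $|\mu^2(m)|^2 \leq 1$ the right-hand side is at most $(Q^2 + M) \sum_m |a_m|^2$, as required (the $\ll_\epsilon$ in the statement could in fact be replaced by $\ll$). The main technical obstacle is the primitivity reduction: the Gauss sum identity above fails for imprimitive characters, so one must carry out the Möbius inversion carefully to avoid losing a factor growing in $q$. Once this bookkeeping is handled, the remainder is a direct combination of Plancherel-type orthogonality and the additive large sieve estimate.
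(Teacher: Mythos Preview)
The paper does not actually prove this proposition: it is stated in the ``Analytic Tools'' section as a quotation from Iwaniec--Kowalski, with no argument given. Your sketch is the standard Montgomery--Vaughan reduction of the multiplicative large sieve to the additive one via Gauss sums, which is precisely the proof one finds in that reference, so there is nothing to compare.

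One small simplification: you do not need M\"obius inversion to handle primitivity. Since the Gauss-sum identity gives $q\,|T(\chi)|^{2} = \bigl|\sum_{a}\bar\chi(a)S(a,q)\bigr|^{2}$ for \emph{primitive} $\chi$, and the right-hand side is nonnegative for \emph{every} $\chi$, you may simply extend the outer sum to all characters by positivity and then apply orthogonality directly:
\[
\sum_{\chi\bmod q}^{*} q\,|T(\chi)|^{2}
\;\le\;
\sum_{\chi\bmod q}\Bigl|\sum_{a}\bar\chi(a)S(a,q)\Bigr|^{2}
\;=\;
\varphi(q)\sum_{\substack{a\bmod q\\(a,q)=1}}|S(a,q)|^{2}.
\]
Your observation that the $\epsilon$ is superfluous is also correct.
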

The next is also from \cite{IwaniecKowalski}. In their terminology
a set of $\alpha_{r}=\left(\alpha_{r,1},\ldots,\alpha_{r,k}\right)\in\mathbb{R}^{k}$
is $\delta$-spaced if $\max_{i}\left|\alpha_{r,i}-\alpha_{r',i}\right|\ge\delta$
for all $r\ne r'$.
\begin{prop}
\label{prop:multivar large seive}Let $d\ge1$ and $\delta>0$ and
let $\alpha_{r}=\left(\alpha_{r,1},\ldots,\alpha_{r,d}\right)$ be
$\delta$-spaced points in $\mathbb{R}^{d}/\mathbb{Z}^{d}$ and $a_{n}$
a sequence in $\mathbb{C}$ indexed by $n=\left(n_{1},\ldots,n_{d}\right)\in\mathbb{Z}^{d}$
wih $1\le n_{i}\le N$. Then 
\[
\sum_{r}\sum_{n}\left|a_{n}\exp\left(2\pi i\left(n\cdot\alpha_{r}\right)\right)\right|^{2}\ll\left(\delta^{-d}+N^{d}\right)\left|a\right|.
\]
\end{prop}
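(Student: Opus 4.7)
The plan is to follow the classical large sieve strategy: pass to the dual bilinear form, majorize by a Beurling--Selberg type function, and use Poisson summation to exploit the $\delta$-spacing. I read the inequality in its standard form
\[
\sum_r \left| \sum_n a_n \exp(2\pi i (n \cdot \alpha_r)) \right|^2 \ll (\delta^{-d} + N^d) \sum_n |a_n|^2,
\]
since as literally written the left-hand side collapses to a trivial bound. By the duality principle for bilinear forms, it suffices instead to establish the transposed inequality
\[
\sum_{n \in [1, N]^d \cap \mathbb{Z}^d} \left| \sum_r c_r \exp(-2\pi i (n \cdot \alpha_r)) \right|^2 \ll (\delta^{-d} + N^d) \sum_r |c_r|^2
\]
for arbitrary complex coefficients $c_r$.

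Next I would construct a nonnegative majorant $F : \mathbb{R}^d \to \mathbb{R}_{\ge 0}$ of the characteristic function of $[1, N]^d$ as a tensor product $F(x) = \prod_{i=1}^d F_i(x_i)$, where each $F_i$ is a one-dimensional Beurling--Selberg majorant of $\chi_{[1,N]}$ of exponential type $\pi \delta$. The Paley--Wiener theorem guarantees that $\widehat{F_i}$ is supported in $[-\delta/2, \delta/2]$, so $\widehat{F}$ is supported in $[-\delta/2, \delta/2]^d$, and the extremal property yields $\int F_i = N + O(\delta^{-1})$, hence $\widehat{F}(0) = \int F \ll N^d + \delta^{-d}$. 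Bounding $\chi_{[1,N]^d}$ by $F$, expanding the square, and applying Poisson summation transforms the dual sum into
\[
\sum_{r, r'} c_r \overline{c_{r'}} \sum_{m \in \mathbb{Z}^d} \widehat{F}\bigl(m + (\alpha_r - \alpha_{r'})\bigr).
\]
Interpreting the spacing hypothesis as referring to the torus $\mathbb{R}^d/\mathbb{Z}^d$, for each $r \ne r'$ there is a coordinate $i$ with $\|\alpha_{r,i} - \alpha_{r',i}\|_{\mathbb{R}/\mathbb{Z}} \ge \delta$, so every lattice vector $m + (\alpha_r - \alpha_{r'})$ has $i$-th entry of absolute value at least $\delta > \delta/2$ and thus lies outside $\mathrm{supp}\,\widehat{F}$. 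All off-diagonal terms vanish, and the diagonal $r = r'$ contributes $\widehat{F}(0) \sum_r |c_r|^2 \ll (N^d + \delta^{-d}) \sum_r |c_r|^2$, giving the claimed bound.

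The main (and essentially the only) obstacle is assembling the right majorant: one needs $F \ge \chi_{[1,N]^d}$, $\widehat{F}$ supported strictly inside $(-\delta, \delta)^d$, and $\int F$ of the correct order $N^d + O(\delta^{-d})$ after absorbing cross terms of the form $N^{d-j}\delta^{-j}$ into $\ll N^d + \delta^{-d}$. The one-dimensional Beurling--Selberg extremal function is designed for precisely this task, and it tensors cleanly to produce the required $d$-dimensional majorant; duality, Poisson summation, and the orthogonality arising from the spacing hypothesis are then formal manipulations.
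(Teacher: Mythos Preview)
The paper does not prove this proposition at all: it is listed among the ``Analytic Tools'' and simply cited from Iwaniec--Kowalski, so there is no argument in the paper to compare against. Your sketch is the standard Selberg--Montgomery proof of the (multidimensional) large sieve---duality, Beurling--Selberg majorant with Fourier support in $[-\delta/2,\delta/2]^d$, Poisson summation, and elimination of the off-diagonal via the $\delta$-spacing---and it is correct; the cross terms $N^{d-j}\delta^{-j}$ are indeed each $\le\max(N,\delta^{-1})^d\le N^d+\delta^{-d}$, so the bound $(N+\delta^{-1})^d\ll_d N^d+\delta^{-d}$ goes through. This is essentially the argument one finds in the cited reference, so you have supplied what the paper chose to quote.
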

Finally we will need a generalized version of Siegel-Walfisz for character
sums from \cite{GeneralizedSiegelWalfisz}. We state a slightly weaker
simplified version here.
\begin{prop}
\label{prop:general siegel walfisz}Let $A,\epsilon>0$. Let $K$
be Galois of degree $n$ and let $\chi$ be a finite Hecke character
of $K$ with conductor $f_{\chi}$. Then there exists a positive constant
$c=c(A,\epsilon)$, not depending on $K$ or $\chi$ such that 
\[
\sum_{N\left(p\right)\le x,\left(p,f_{\chi}\right)=1}\chi\left(p\right)=O\left(dx\log^{2}x\exp\left(-cn\left(\log x\right)^{1/2}/d\right)\right)
\]
 where $d=n^{3}\left|D_{K}N\left(f_{\chi}\right)\right|^{\epsilon}c^{-n}$.
The implied constant does not depend on $K$ or $\chi$.
\end{prop}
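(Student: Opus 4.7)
The plan is to deduce the bound from the standard analytic theory of Hecke $L$-functions, following the classical strategy of the Siegel--Walfisz theorem for Dirichlet characters but keeping track of the explicit dependence on $n$, $|D_K|$ and $N(f_\chi)$. First I would pass from the sum over prime ideals to a Chebyshev-style weighted sum
\[
\psi_K(x, \chi) = \sum_{N(\mathfrak{a}) \le x} \Lambda_K(\mathfrak{a})\, \chi(\mathfrak{a}),
\]
where $\Lambda_K$ is the von Mangoldt function on integral ideals of $K$. By partial summation, together with the trivial bound on the contribution of proper prime powers (which loses at most a factor polynomial in $n$ and $\log x$, easily absorbed into the $\log^2 x$ factor), the estimate for $\sum_{N(p) \le x} \chi(p)$ follows from an estimate of the same shape for $\psi_K(x, \chi)$.

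The next step is to apply a truncated Perron formula to $-L'/L(s, \chi)$, writing
\[
\psi_K(x, \chi) = -\frac{1}{2\pi i} \int_{c - iT}^{c + iT} \frac{L'}{L}(s, \chi)\, \frac{x^s}{s}\, ds + E(x, T)
\]
for suitable $c > 1$ and a parameter $T$ to be chosen in terms of $x$ and $d$. One then shifts the contour to the left of the critical strip; since $\chi$ is a nontrivial finite Hecke character, $L(s,\chi)$ is entire, and the main terms come from residues at the nontrivial zeros $\rho$ of $L(s, \chi)$. The contribution of the shifted contour and the horizontal segments is bounded via convexity-type estimates for $L'/L$ in the critical strip, which depend polynomially on $n$, $|D_K|$ and $N(f_\chi)$.

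The crucial input is a zero-free region for Hecke $L$-functions of the shape
\[
\mathrm{Re}(s) \ge 1 - \frac{c_1}{d \log(|\mathrm{Im}(s)| + 3)} \qquad \text{for } |\mathrm{Im}(s)| \ge 1,
\]
with at most one possible exceptional real zero $\beta_0$ close to $1$. Such a region, with the desired explicit dependence on the invariants of $K$ and $\chi$, is due to Stark, Fogels and Lagarias--Odlyzko. Summing $x^\rho / \rho$ over non-exceptional zeros $\rho$ with $|\mathrm{Im}(\rho)| \le T$, using the standard zero-counting estimate in rectangles, and then optimizing $T$ as a function of $x$ and $d$, produces a bound of the form $d x \log^2 x \exp(-c_2 n (\log x)^{1/2}/d)$, matching the statement.

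The main obstacle, as always, is the potential Siegel zero $\beta_0$, whose contribution $x^{\beta_0}/\beta_0$ is not dominated by the zero-free region. This is resolved by appealing to the Hecke-$L$ analogue of Siegel's theorem, which yields $1 - \beta_0 \gg_\epsilon (|D_K| N(f_\chi))^{-\epsilon}$ ineffectively in $\epsilon$. Writing $x^{\beta_0} \le x \exp(-(1 - \beta_0) \log x)$, the Siegel-zero contribution is then also of the required form, at the cost of an ineffective constant $c = c(\epsilon)$. This is precisely the role of the $\epsilon$ appearing in the definition of $d$ in the statement; tracking the dependencies carefully throughout the argument then gives the uniform bound asserted.
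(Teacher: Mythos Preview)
The paper does not actually prove this proposition; it is quoted from the literature (reference \cite{GeneralizedSiegelWalfisz} in the paper) as one of several analytic tools, and no argument is given beyond the citation. Your sketch is the standard route to results of this type---explicit formula via truncated Perron applied to $-L'/L(s,\chi)$, a Lagarias--Odlyzko/Stark zero-free region with explicit dependence on $n$, $|D_K|$, $N(f_\chi)$, and an ineffective Siegel-type bound to dispose of a possible exceptional zero---and is essentially how the cited result is obtained. So there is no discrepancy to flag: you have supplied an outline of the proof that the paper simply imports.

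One small remark: in your sketch you should be slightly careful about the case where $\chi$ is trivial on $\mathcal O_K^\times$ but the conductor is nontrivial, versus the principal character; the statement as given implicitly assumes $\chi$ is nonprincipal so that $L(s,\chi)$ is entire, and you use this when shifting the contour. This is consistent with how the proposition is applied in the paper (to a character that is nontrivial by construction), but worth noting if you intend the sketch to stand on its own.
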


\section{\label{sec:Bounding-The-Error}Bounding The Error Term}

We start with the expression for $\sum_{K,D_{K}<D}\left|\mathrm{im}\left(1-\sigma_{K}\right)\right|^{k}$
which we derived in Proposition \ref{prop:mainexpression}, 
\begin{equation}
S_{k}\left(X\right)=\frac{1}{\left(p-1\right)\cdot p^{k}}\sum_{\left(D_{u}\right)}\sum_{\left(\chi_{l}\right)}\frac{\mu^{2}\left(\prod D_{u}\right)}{p^{k\omega\left(\prod D_{u}\right)}}\prod_{\left(v\right)}\prod_{p\mid D_{v}}\chi_{l}\left(\prod_{\left(u\right)}D_{u}^{\Phi_{k}\left(u,v\right)}\right)\label{eq:mainexpression}
\end{equation}
On the right we are summing over $p^{2k}$-tuples of integers with
$\prod D_{u}<X$ whose primes factors are congruent to $1$ mod $p$
or equal to $p$.

Fix $k\in\mathbb{Z}_{\ge1}$ and let $\Delta=1+\log^{-\left(p-1\right)\cdot p^{k}}X$.
Define $\mathbf{A}$ to be a tuple $\left(A_{i}\right)_{i=0}^{p^{2k}}$
of variables with each $A_{i}$ corresponding to $D_{i}$, and each
$A_{i}=\Delta^{j}$ for some $j\ge0$. We can partition $S_{k}\left(X\right)$
according to the various $\mathbf{A}$, by letting $S_{k}\left(X,\mathbf{A}\right)$
be the above sum but now restricted to tuples $\left(D_{i}\right)$
for which $A_{i}\le D_{i}\le\Delta A_{i}$ and $\prod_{i}D_{i}<X$.
Hence 
\[
S_{k}\left(X\right)=\sum_{\mathbf{A}}S_{k}\left(X,\mathbf{A}\right).
\]

Note that if $\Delta=1+\log^{-\left(p-1\right)\cdot p^{k}}X$ then
there are $O\left(\left(\log X\right)^{p^{2}\left(1+\left(p-1\right)\cdot p^{k}\right)}\right)$
possible $\mathbf{A}$ with $S_{k}\left(X,\mathbf{A}\right)$ not
empty. This is since there are $O\left(\left(\log X\right)^{\left(1+\left(p-1\right)\cdot p^{k}\right)}\right)$
choices for each $1<A_{i}\le X$. 

We now consider certain families of the $\mathbf{A}$ which make a
negligble contribution to the sum and hence can be removed. These
will be the same as the four families from \cite{fk1}, section 5.4.
In the case of the first two the argument is identical. For completeness
we reproduce their arguments here. The proofs involving the third
and fourth families require modification. 

First we reduce the sum to terms where all of the $D_{u}$ satisfy
$\omega\left(D_{u}\right)\le\Omega$, where we define $\Omega=ep^{2k}\left(\log\log X+B_{0}\right)$.
We restate the argument from \cite{fk1}.

Let $S_{0}$ be the terms in $\left(\ref{eq:mainexpression}\right)$where
not all of the $D_{u}$ sastisfy $\omega\left(D_{u}\right)\le\Omega$.
Let $n=\prod D_{u}$. Then
\begin{eqnarray*}
S_{0} & \ll & \sum_{n<X,\omega\left(n\right)>\Omega}\mu^{2}\left(n\right)p^{k\omega\left(n\right)}.
\end{eqnarray*}
 Then splitting the sum up by number of prime factors and applying
Lemma \ref{lem:There-exists-an} we get the bound
\begin{eqnarray*}
\sum_{n<X,\omega\left(n\right)>\Omega}\mu^{2}\left(n\right)p^{k\omega\left(n\right)} & \ll & \sum_{l\ge\Omega}\frac{X}{\log X}p^{kl}\frac{\left(\log\log X+B_{0}\right)^{l}}{l!}\\
 & \ll & \frac{X}{\log X}\sum_{l\ge\Omega}\left(\frac{p^{k}\left(\log\log X+B_{0}\right)}{l/e}\right)^{l}\\
 & \ll & \frac{X}{\log X}\sum_{l\ge\Omega}\left(\frac{1}{p^{k}}\right)^{l}\\
 & \ll & \frac{X}{\log X}
\end{eqnarray*}
 where in the last inequality we are using $l/e\ge p^{2k}\left(\log\log X+B_{0}\right)$.
Thus we can assume in the remainder that all variables $D_{u}$ satisfy
$\omega\left(D_{u}\right)\le\Omega$. We will only need this fact
to bound family 4.

\subsection{The first family}

Note that it is possible to have an $\mathbf{A}$ for which $S_{k}\left(X,\mathbf{A}\right)$
is not empty, but

\begin{equation}
\prod_{i}\Delta A_{i}>X\label{eq:family1}
\end{equation}
making the restriction $D<X$ necessary. By Lemma \ref{lem:Let--with}
we have
\begin{eqnarray*}
\sum_{\mathbf{A}\mathrm{\ satisifes\ }\left(\ref{eq:family1}\right)}S_{k}\left(X,\mathbf{A}\right) & \ll & \sum_{\Delta^{-p^{2k}}X\le D\le X}p^{2k\omega\left(D\right)}\left(p-1\right)^{\omega\left(D\right)}\left(\frac{1}{p}\right)^{k\omega\left(D\right)}\\
 & = & \sum_{\Delta^{-p^{2k}}X\le D\le X}\left(\left(p-1\right)\cdot p^{k}\right)^{\omega\left(D\right)}\\
 & \ll & \left(1-\Delta^{-p^{2k}}\right)X\left(\log X\right)^{\left(p-1\right)\cdot p^{k}-1}\\
\end{eqnarray*}
 Using that $\left(1+x\right)^{\alpha}=1+\alpha x+O\left(x^{2}\right)$
for $x\longrightarrow0$, we get $\Delta^{-p^{2k}}=\left(1+\log^{-\left(p-1\right)\cdot p^{k}}X\right)^{-p^{2k}}=1-p^{2k}\log^{-\left(p-1\right)\cdot p^{k}}X+O\left(\log^{-2\left(\left(p-1\right)\cdot p^{k}\right)}X\right)$.
This gives the bound
\begin{eqnarray*}
 & \ll & \left(p^{2k}\log^{-\left(p-1\right)\cdot p^{k}}X+O\left(\log^{-2\left(\left(p-1\right)\cdot p^{k}\right)}X\right)\right)X\left(\log X\right)^{\left(p-1\right)\cdot p^{k}-1}\\
 & \ll & X/\log X.
\end{eqnarray*}

\subsection{The second family}

In the remaining sections we will need the following quantities. Let
\[
X^{\dagger}=\log^{4\left(1+p^{2k}\left(1+\left(p-1\right)\cdot p^{k}\right)\right)}X
\]
\[
X^{\ddagger}=\exp\left(\log^{\eta}X\right)
\]
for some small $\eta$. Consider the $\mathbf{A}$ which satisfy 
\begin{equation}
\mbox{at most \ensuremath{p^{k-1}} variables }A_{i}>X^{\ddagger}.\label{eq:family2}
\end{equation}
Let $r$ be the number of variables greater than $X^{\ddagger}$.
We have

\begin{eqnarray*}
\sum_{\mathbf{A}\mathrm{\ satisifes\ }\left(\ref{eq:family2}\right)}S_{k}\left(X,\mathbf{A}\right) & \ll & \sum_{r=0}^{p^{k-1}}\sum_{m<\left(X^{\ddagger}\right)^{p^{2k}-r}}\mu^{2}\left(m\right)\tau_{p^{2k}-r}\left(m\right)\left(\frac{p-1}{p^{k}}\right)^{\omega\left(m\right)}\\
 &  & \times\sum_{n<X/m}\mu^{2}\left(n\right)\tau_{r}\left(n\right)\left(\frac{p-1}{p^{k}}\right)^{\omega\left(n\right)}\\
 & = & \sum_{r=0}^{p^{k-1}}\sum_{m<\left(X^{\ddagger}\right)^{p^{2k}-r}}\mu^{2}\left(m\right)\left(p^{2k}-r\right)^{\omega\left(m\right)}\left(\frac{p-1}{p^{k}}\right)^{\omega\left(m\right)}\\
 &  & \times\sum_{n<X/m}\mu^{2}\left(n\right)\left(\frac{\left(p-1\right)r}{p^{k}}\right)^{\omega\left(n\right)}
\end{eqnarray*}
 Then we get using Lemma \ref{lem:Let--with} 

\begin{eqnarray*}
 & \ll & \sum_{r=0}^{p^{k-1}}\sum_{m<\left(X^{\ddagger}\right)^{p^{k}-r}}\frac{\left(\left(p-1\right)\cdot p^{k}\right)^{\omega\left(m\right)}}{m}\left(X/m\right)\left(\log X\right)^{\left(p-1\right)r/p^{k}-1}\\
 & \ll & X\left(\log X\right)^{\eta\left(p-1\right)\cdot p^{k}-1/p}.
\end{eqnarray*}

\subsection{The third family, the case $p=3$}

For the third and fourth families we will assume $p=3$ and bound
the error term unconditionally. Afterwards we will handle the case
of general $p$ case under the assumption of GRH.

Before continuing we use cubic reciprocity to rewrite the expression
we have been working with so far. Let $\zeta$ be a cube root of unity.
We recall some facts about the field $\mathbb{Q}\left(\zeta\right)$
and the cubic residue symbol which can be found in \cite{StephanBaierMatthewYoung}.
The ring of integers of $\mathbb{Q}\left(\zeta_{3}\right)$ is $\mathcal{O}=\mathbb{Z}\left[\zeta\right]$.
It is a principle ideal domain and every ideal $\left(n\right)\subset\mathcal{O}$
with $\left(n,3\right)=1$ has a unique generator $n$ which satisfies
$n\equiv1\mod3$. The only prime which ramifies is $3=\left(1-\zeta\right)^{2}$.
The primes of $\mathbb{Z}$ which split in $\mathcal{O}$ are exactly
the ones congruent to $1\mod3$ in $\mathbb{Z}.$ We can choose a
basis $\left\{ 1,\zeta\right\} $ for $\mathcal{O}$, so that every
element can be written as $a+b\zeta$, and then $N\left(a+b\zeta\right)=a^{2}+b^{2}-ab$.
Using this it can be shown that $\left|\left\{ a+b\zeta\in\mathcal{O}\mid N\left(a+b\zeta\right)\le A\right\} \right|\ll A$
(both $a$ and \textbf{$b$ }are $O\left(A^{1/2}\right)$).

In the remainder of the paper all summations over elements of $\mathcal{O}$
will be restricted to those which are products of split primes and
which are congruent to $1\mod3$ (by the above these can be viewed
as summations over integral ideals of $\mathcal{O}$). 

Denote by $\left(\frac{n}{m}\right)_{3}$ the cubic residue symbol,
defined for $\left(m\right)\neq\left(1-\zeta\right)$ and $n$ coprime
to $m$. For $l\neq3$ the characters $\chi_{l}$ are cubic dirichlet
characters and hence for $n\in\mathbb{Z}$ we have $\chi_{l}\left(n\right)=\left(\frac{n}{\pi}\right)_{3}$
for some $\pi$ with $N\left(\pi\right)=l$. 

We define some terminology which will be used in the remainder of
the paper. We define indices $u$ and $v$ to be linked if $\Phi_{k}\left(u,v\right)\neq0$.
Otherwise we say they are unlinked. 

Consider the third family which consists of those $\mathbf{A}$ such
that there are two linked indices $A_{u}$ and $A_{v}$, and 
\begin{equation}
A_{u},A_{v}>X^{\dagger}.\label{eq:family3}
\end{equation}

Fix such an $\mathbf{A}$ and two linked indices $u,v$. We consider
two cases: case 1 will be when both $\Phi_{k}\left(u,v\right)$ and
$\Phi_{k}\left(v,u\right)$ are nonzero in \ref{eq:mainexpression}
and case 2 will be when only one of these is nonzero.

\textit{Case 1:} Both $\Phi_{k}\left(u,v\right)$ and $\Phi_{k}\left(v,u\right)$
are nonzero.
\begin{lem}
\label{lem:For-any-linked}For any linked indices $u$ and $v$ with
$\Phi_{k}\left(u,v\right)$ and $\Phi_{k}\left(v,u\right)$ both nonzero,
\begin{equation}
S_{k}\left(X,\mathbf{A}\right)\ll\sum_{D_{w},w\neq u,v}\left|\sum_{d_{v},d_{u}\in\mathcal{O}}a\left(d_{u}\right)a\left(d_{v}\right)\left(\frac{d_{u}}{d_{v}}\right)_{3}\right|\label{eq:residuesymbol}
\end{equation}
with $\left|a\left(d_{i}\right)\right|\le1$, where the summations
over elements of $d_{i}\in\mathcal{O}$ congruent to $1\mod3$ which
are products of split primes, $\mu^{2}\left(N\left(d_{i}\right)\right)=1$
and $N\left(d_{i}\right)\le\Delta A_{i}$.
\end{lem}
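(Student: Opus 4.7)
The plan is to start from formula~(\ref{eq:mainexpression}), specialize everything to $p=3$, and rewrite the inner Dirichlet character sum as a product of cubic residue symbols in $\mathcal{O}=\mathbb{Z}[\zeta_3]$. For each rational prime $l\equiv 1\pmod 3$ we have $l=\pi_l\bar\pi_l$ with $\pi_l,\bar\pi_l$ primary, and each of the two nontrivial cubic characters mod $l$ corresponds, via $\chi_l(\cdot)=\left(\frac{\cdot}{\pi_l}\right)_3$, to one of these two primes. Summing over the tuples $(\chi_l)_{l\mid D_w}$ is therefore the same as summing over primary lifts $d_w\in\mathcal{O}$ with $N(d_w)=D_w$ and $d_w$ a product of split primes, and the character product becomes $\left(\frac{\cdot}{d_w}\right)_3$. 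Using the coprimality of the $D_{w}$, the inner product in~(\ref{eq:mainexpression}) turns into
\[
\prod_{v'}\prod_{u'}\left(\frac{D_{u'}}{d_{v'}}\right)_3^{\Phi_k(u',v')}.
\]

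I would then separate the factors in this product according to whether the pair $(u',v')$ lies in $\{u,v\}^2$ or not. Since $\Phi_k(w,w)=0$, the only factors that genuinely link $d_u$ and $d_v$ come from $(u',v')\in\{(u,v),(v,u)\}$, and give
\[
\left(\frac{D_u}{d_v}\right)_3^{\Phi_k(u,v)}\left(\frac{D_v}{d_u}\right)_3^{\Phi_k(v,u)}.
\]
Writing $D_u=d_u\bar d_u$ and $D_v=d_v\bar d_v$ in $\mathcal{O}$ and applying the primary cubic reciprocity law $(\alpha/\beta)_3=(\beta/\alpha)_3$, which is available because $d_u,\bar d_u,d_v,\bar d_v$ are all primary and pairwise coprime, this reduces to a power of $\left(\frac{d_u}{d_v}\right)_3$ times auxiliary symbols such as $(\bar d_u/d_v)_3$ and $(d_u/\bar d_v)_3$. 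The latter can be made to match $\left(\frac{d_u}{d_v}\right)_3$ up to complex numbers of modulus $1$ depending only on $d_u$ or only on $d_v$, after a relabeling $d_u\leftrightarrow\bar d_u$ if necessary. The hypothesis that both $\Phi_k(u,v)$ and $\Phi_k(v,u)$ are nonzero in $\mathbb{F}_3^{\times}$ guarantees that the resulting exponent on $\left(\frac{d_u}{d_v}\right)_3$ is nonzero, and by a further power/conjugation reassignment it can be arranged to equal $1$.

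The remaining character factors split into a function of $d_u$ alone and a function of $d_v$ alone, each of modulus $1$. Combining these with the bounded multiplicative prefactor $\mu^2(\prod D_{u'})/((p-1)\,p^k\,p^{k\omega(\prod D_{u'})})$, which factors completely by coprimality of the $D_{w}$, and with the arithmetic constraints $N(d_i)\le\Delta A_i$ and $\mu^2(N(d_i))=1$ that are forced by the definition of $S_k(X,\mathbf{A})$, I would define coefficients $a(d_u),a(d_v)$ with $|a(d_i)|\le1$. Pulling the sum over the remaining data indexed by $w\ne u,v$ outside the inner double sum in $d_u,d_v$ and invoking the triangle inequality then yields the claimed bound~(\ref{eq:residuesymbol}).

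The main obstacle will be the reciprocity step described above: one must track how the four cubic symbols built from $d_u,\bar d_u,d_v,\bar d_v$ with exponents in $\mathbb{F}_3$ combine under primary reciprocity and complex conjugation into exactly one clean bilinear factor $\left(\frac{d_u}{d_v}\right)_3$, modulo absorbable univariate terms. It is precisely this reciprocity-based reduction, available cleanly only for $p=3$, that underlies the unconditional nature of the argument here and that is unavailable for general $p$ without additional input such as GRH.
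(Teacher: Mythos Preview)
Your outline is essentially the paper's own argument: lift the Dirichlet characters to cubic residue symbols over $\mathcal{O}$, isolate the two factors indexed by $(u,v)$ and $(v,u)$, collapse them via cubic reciprocity to a single bilinear symbol, and absorb every remaining character factor and the weight $\mu^{2}(\prod D_{u'})/p^{k\omega(\prod D_{u'})}$ into bounded coefficients $a(d_u),a(d_v)$.

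Two places where your write-up is looser than the paper and should be tightened. First, you only treat primes $l\equiv 1\pmod 3$; the variables $D_w$ in $S_k(X,\mathbf{A})$ are also allowed to be divisible by $3$, which ramifies in $\mathcal{O}$ and does not fit the ``primary split lift'' picture. The paper handles this by splitting into the three cases $3\mid D_u$, $3\mid D_v$, $3\mid D/D_uD_v$, dividing out the factor $1-\zeta$ and shifting the range of summation accordingly; you should say a word about this. Second, your description of the reciprocity step is imprecise: the auxiliary symbols $(\bar d_u/d_v)_3$ and $(d_u/\bar d_v)_3$ are themselves genuinely bilinear in $(d_u,d_v)$ and cannot individually be written as a univariate factor times a power of $(d_u/d_v)_3$. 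What actually happens (and what the paper does) is cleaner: one first uses that the inner sum over characters is closed under conjugation to normalise both exponents $\Phi_k(u,v),\Phi_k(v,u)$ to $1$ (this only permutes the coefficients), and then a short direct computation gives the exact identity
\[
\left(\frac{D_u}{d_v}\right)_3\left(\frac{D_v}{d_u}\right)_3=\left(\frac{\bar d_u}{\bar d_v}\right)_3,
\]
with no leftover factors at all. Your endpoint is the same, but you should replace the vague ``up to univariate modulus-$1$ factors'' claim by this exact collapse.
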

\begin{proof}
Fix two indices $u',v'$ such that $\Phi_{k}\left(u,v\right)\neq0$
and $\Phi_{k}\left(u,v\right)\neq0$. In the following we write $D=\prod_{w}D_{w}$
and $D'=D_{u}D_{v}$. Then from $\left(\ref{eq:mainexpression}\right)$
we get

\begin{align*}
S_{k}\left(X,\mathbf{A}\right) & =\frac{1}{2\cdot3^{k}}\sum_{\left(D_{w}\right)}\sum_{\left(\chi_{l}\right)}\frac{\mu^{2}\left(D\right)}{3^{k\omega\left(D\right)}}\prod_{y}\prod_{l\mid D_{y}}\chi_{l}\left(\prod_{z}D_{z}^{\Phi_{k}\left(z,y\right)}\right)\\
 & =\frac{1}{2\cdot3^{k}}\sum_{D_{w},w\neq u,v}\sum_{\left(\chi_{l}\right),l\mid D/D'}\frac{1}{3^{k\omega\left(D/D'\right)}}\sum_{D_{u},D_{v}}\sum_{\left(\chi_{l}\right),l\mid D'}\frac{\mu^{2}\left(D\right)}{3^{k\omega\left(D'\right)}}\\
 & \times\prod_{y}\prod_{l\mid D_{y}}\chi_{l}\left(\prod_{z}D_{z}^{\Phi_{k}\left(z,y\right)}\right)
\end{align*}
We can split this sum into 3 pieces according to whether $3\mid D_{u}$,
$3\mid D_{v}$ or $3\mid D/D'$ and apply the next argument to each
case seperately. We illustrate the case $3\mid D_{v}$, the others
being handled similarly. 

In the inner sum over $D_{u},D_{v}$ we can replace the $\chi_{l}$
with cubic residue symbols:

\begin{align}
 & \sum_{D_{u},D_{v}}\sum_{\left(\chi_{l}\right),l\mid D'}\frac{\mu^{2}\left(D\right)}{3^{k\omega\left(D'\right)}}\prod_{y}\prod_{l\mid D_{y}}\chi_{l}\left(\prod_{z}D_{z}^{\Phi_{k}\left(z,y\right)}\right)\nonumber \\
 & \ll\sum_{d_{v},d_{u}\in\mathcal{O}}b\left(d_{u}\right)b\left(d_{v}\right)\left(\frac{D_{u}}{d_{v}}\right)_{3}^{\Phi_{k}\left(u,v\right)}\left(\frac{D_{v}/3}{d_{u}}\right)_{3}^{\Phi_{k}\left(v,u\right)}\nonumber \\
 & =\sum_{d_{v},d_{u}\in\mathcal{O}}a\left(d_{u}\right)a\left(d_{v}\right)\left(\frac{D_{u}}{d_{v}}\right)_{3}\left(\frac{D_{v}/3}{d_{u}}\right)_{3}\label{eq:expanding}
\end{align}
where in the second line $D_{u},D_{v}$ now denote $N\left(d_{u}\right),N\left(d_{v}\right)$
and 
\[
b\left(d_{u}\right)=\frac{\mu^{2}\left(D\right)}{3^{k\omega\left(D_{u}\right)}}\prod_{y\neq u,v}\prod_{l\mid D_{y}}\chi_{l}\left(D_{u}^{\Phi_{k}\left(u,y\right)}\right)\left(\frac{3^{\Phi_{k}\left(u,v\right)}\prod_{y\neq u,v}D_{v}^{\Phi_{k}\left(y,u\right)}}{d_{u}}\right)_{3}
\]
and similarly for $b\left(d_{v}\right)$ (which will contain the factor
$\chi_{3}\left(D_{u}\right)$). The elements $d_{v},d_{u}$ in the
summation satisfy $A_{v}\le N\left(d_{v}\right)\le\Delta A_{v}$ and
$A_{u}\le N\left(d_{u}\right)\le\Delta A_{u}$ and $d_{u}\equiv1\mod3$
and $d_{v}/\left(1-\zeta\right)\equiv1\mod3$. We can instead sum
over $d_{v}$ such that $A_{v}/3\le N\left(d_{v}\right)\le\Delta A_{v}/3$
and $d_{v}\equiv1\mod3$. Note $\Phi_{k}$ is either 1 or 2, and squaring
a cubic character is the same as conjugating it. Since we are summing
over a set of characters closed under conjugation removing $\Phi_{k}$
from the exponent permutes the coefficients.

Furthermore by cubic reciprocity, for any $D_{u}=N\left(d_{u}\right)$
and $D_{v}=N\left(d_{v}\right)$ not divisible by 3 we have
\begin{eqnarray}
\left(\frac{D_{u}}{d_{v}}\right)_{3}\left(\frac{D_{v}}{d_{u}}\right)_{3} & = & \left(\frac{d_{u}}{d_{v}}\right)_{3}\left(\frac{\overline{d_{u}}}{d_{v}}\right)_{3}\left(\frac{d_{v}}{d_{u}}\right)_{3}\left(\frac{\overline{d_{v}}}{d_{u}}\right)_{3}\nonumber \\
 & = & \left(\frac{d_{u}}{d_{v}}\right)_{3}^{2}\left(\frac{\overline{d_{u}}}{d_{v}}\right)_{3}\left(\frac{\overline{d_{u}}}{d_{v}}\right)_{3}^{2}\label{eq:cubic recip}\\
 & = & \left(\frac{\overline{d_{u}}}{\overline{d_{v}}}\right)_{3}.\nonumber 
\end{eqnarray}
This proves the lemma.
\end{proof}
We will follow the standard strategy of bounding bilinear sums using
Cauchy-Schwarz followed by a large seive type bound. By Cauchy-Schwarz
we have
\begin{eqnarray*}
\left|\sum_{d_{v}\in\mathcal{O}}\sum_{d_{u}\in\mathcal{O}}a\left(d_{u}\right)a\left(d_{v}\right)\left(\frac{d_{u}}{d_{v}}\right)_{3}\right| & \ll & A_{v}^{1/2}\left(\sum_{d_{v}\in\mathcal{O}}\left|\sum_{d_{u}\in\mathcal{O}}a\left(d_{u}\right)\left(\frac{d_{u}}{d_{v}}\right)_{3}\right|^{2}\right)^{1/2}
\end{eqnarray*}
 Now we focus on bounding the summation on the right hand side. For
$r,n\in\mathcal{O}$, with $\left(r,n\right)=1$, $n\equiv1\left(3\right)$
and $\mu^{2}\left(N\left(n\right)\right)=1$ define 
\[
g\left(r,n\right)=\sum_{d\left(\mathrm{mod}n\right)}\left(\frac{d}{n}\right)_{3}\check{e}\left(rd/n\right)
\]
 to be the generalized cubic gauss sum where $\check{e}\left(z\right)=\exp\left(2\pi i\left(z+\overline{z}\right)\right)$.
The following facts can be found in \cite{StephanBaierMatthewYoung}.
It satisfies the property
\[
g\left(rs,n\right)=\overline{\left(\frac{s}{n}\right)_{3}}g\left(r,n\right).
\]
We will write $g\left(n\right)=g\left(1,n\right)$.

Thus the above sum becomes
\begin{eqnarray*}
\sum_{d_{v}}\left|\sum_{d_{u}}a\left(d_{u}\right)\left(\frac{d_{u}}{d_{v}}\right)_{3}\right|^{2} & \le & \sum_{d_{v}}\frac{1}{\left|g\left(d_{v}\right)\right|^{2}}\left|\sum_{d_{u}}a\left(d_{u}\right)g\left(d_{u},d_{v}\right)\right|^{2}\\
 & \le & \sum_{d_{v}}\frac{1}{\left|g\left(d_{v}\right)\right|^{2}}\left|\sum_{d\left(\mathrm{mod}d_{v}\right)}\left(\frac{d}{d_{v}}\right)_{3}\sum_{d_{u}}a\left(d_{u}\right)\check{e}\left(dd_{u}/d_{v}\right)\right|^{2}\\
 & \le & \sum_{d_{v}}\frac{1}{\left|g\left(d_{v}\right)\right|^{2}}\sideset{}{_{\chi}^{*}}\sum\left|\sum_{d\left(\mathrm{mod}d_{v}\right)}\chi\left(d\right)\sum_{d_{u}}a\left(d_{u}\right)\check{e}\left(dd_{u}/d_{v}\right)\right|^{2}\\
 & \le & \sum_{d_{v}}\sum_{d\left(\mathrm{mod}d_{v}\right)}\left|\sum_{d_{u}}a\left(d_{u}\right)\check{e}\left(dd_{u}/d_{v}\right)\right|^{2}
\end{eqnarray*}
 where the summation $\sideset{}{_{\chi}^{*}}\sum$ is over primitive
characters of $\left(\mathcal{O}/d_{v}\right)^{\times}$ and in the
last line we are opening the square and using orthogonality of characters.
Using our previously chosen basis $\left\{ 1,\zeta\right\} $ we can
rewrite a summation over elements of $\mathcal{O}$ as one over $\mathbb{Z}^{2}$.We
obtain the bound
\begin{eqnarray*}
\sum_{d_{v}}\sum_{d\left(\mathrm{mod}d_{v}\right)}\left|\sum_{d_{u}}a\left(d_{u}\right)\check{e}\left(dd_{u}/d_{v}\right)\right|^{2} & \ll & \sum_{d_{v}}\sum_{d\left(\mathrm{mod}d_{v}\right)}\left|\sum_{s_{1},s_{2}}a\left(s_{1},s_{2}\right)\check{e}\left(dd_{u}/d_{v}\right)\right|^{2}
\end{eqnarray*}
 where $s_{i}\in\mathbb{Z}$ and $s_{i}\ll A_{u}^{1/2}$. 

Write $d/d_{v}=\left(d_{1},d_{2}\right)$ and $d_{u}=\left(s_{1},s_{2}\right)$.
Then $dd_{u}/d_{v}=\left(d_{1}s_{1}-d_{2}s_{2},d_{1}s_{2}+d_{2}s_{1}-d_{2}s_{2}\right)$
and 
\begin{eqnarray*}
\mathrm{tr}\left(dd_{u}/d_{v}\right) & = & s_{1}\left(2d_{1}-d_{2}\right)+s_{2}\left(-d_{1}-d_{2}\right)\\
 & = & \left(s_{1},s_{2}\right)\cdot\left(2d_{1}-d_{2},d_{1}-d_{2}\right).
\end{eqnarray*}
 Hence we can rewrite the right hand side of the above inequality
as
\[
\sum_{d_{v}}\sum_{d\left(\mathrm{mod}d_{v}\right)}\left|\sum_{s_{1},s_{2}}a\left(s_{1},s_{2}\right)\exp\left(2\pi i\left(s_{1},s_{2}\right)\cdot\left(2d_{1}-d_{2},d_{1}-d_{2}\right)\right)\right|^{2}
\]
 We want to apply a multivariable large seive inequality of Proposition
\ref{prop:multivar large seive}, so we will first show that the set
$S$ of $\left(2d_{1}-d_{2},d_{1}-d_{2}\right)$ (where $d/d_{v}$
runs over the above summation) is $1/A_{v}$-spaced. Note that for
any $a/c,b/c\in\mathbb{Q}$ we have $\left|a/c-b/c\right|\ge1/c$.
Hence the spacing of a set is determined by the denominators of the
coordinates of its elements. 

First note that the set of $\left(d_{1},d_{2}\right)$ obtained from
$d/d_{v}$ is distinct in $\left(\mathbb{Q}/\mathbb{Z}\right)^{2}$.
If not this implies $d/d_{v}=d'/d_{v}'+n$ for some $n\in\mathcal{O}$,
so $dd_{v}'=d'd_{v}+nd_{v}d_{v}'$ and noting that $\left(d,d_{v}\right)=\left(d',d_{v}'\right)=1$
and considering divisors of both sides gives a contradiction (recall
$\mathcal{O}$ is a UFD). Furthermore $\left(d_{1},d_{2}\right)\longmapsto\left(2d_{1}-d_{2},d_{1}-d_{2}\right)$
is a linear map which is invertible over $\mathbb{Z}$, hence the
elements of $S$ are all distinct. We can write
\[
\frac{d}{d_{v}}=\frac{d\overline{d_{v}}}{N\left(d_{v}\right)}=\frac{a}{N\left(d_{v}\right)}+\frac{b}{N\left(d_{v}\right)}\zeta
\]
 for some $a,b\in\mathbb{Z}$. Since $N\left(d_{v}\right)\le A_{v}$
it follows that $S$ is $1/A_{v}$-spaced as required. Thus by Proposition
\ref{prop:multivar large seive} we get 
\begin{eqnarray*}
\sum_{d_{v}}\sum_{d\left(\mathrm{mod}d_{v}\right)}\left|\sum_{s_{1},s_{2}}a\left(s_{1},s_{2}\right)\exp\left(2\pi i\left(s_{1},s_{2}\right)\cdot\left(2d_{1}-d_{2},d_{1}-d_{2}\right)\right)\right|^{2} & \ll & \left(A_{v}^{2}+A_{u}\right)A_{u}.
\end{eqnarray*}
Returning to the expression we started with we get
\begin{eqnarray}
\sum_{d_{v}\in\mathcal{O}}\left|\sum_{d_{u}\in\mathcal{O}}a\left(d_{u}\right)a\left(d_{v}\right)\left(\frac{d_{u}}{d_{v}}\right)_{3}\right| & \ll & A_{v}^{1/2}\left(\left(A_{v}^{2}+A_{u}\right)A_{u}\right)^{1/2}\nonumber \\
 & = & A_{v}A_{u}\left(\frac{A_{v}}{A_{u}}+\frac{1}{A_{v}}\right)^{1/2}.\label{eq:finalboundfamily3case1}
\end{eqnarray}
By symmetry we can also bound this by $A_{v}A_{u}\left(\left(\frac{A_{u}}{A_{v}}+\frac{1}{A_{u}}\right)\right){}^{1/2}$.

By symmetry we can let $A_{v}\le A_{u}$. First suppose $A_{v}^{2}<A_{u}$.
By the above bound $\left(\ref{eq:finalboundfamily3case1}\right)$
we get 
\begin{eqnarray*}
S_{k}\left(X,\mathbf{A}\right) & \ll & \sum_{D_{w},w\neq u,v}A_{v}A_{u}\left(\frac{A_{v}}{A_{u}}+\frac{1}{A_{v}}\right)^{1/2}\\
 & \ll & X\left(\frac{1}{A_{u}^{1/2}}+\frac{1}{A_{v}}\right)^{1/2}\\
 & \ll & X/\log^{1+9^{k}\left(1+2\cdot3^{k}\right)}X.
\end{eqnarray*}
Now suppose $A_{u}\le A_{v}^{2}$. Then by Proposition \ref{prop:bs cubic epsilon }
we get the bound, for any $\epsilon>0$, 
\begin{eqnarray*}
S_{k}\left(X,\mathbf{A}\right) & \ll & \sum_{D_{w},w\neq u,v}A_{v}^{1/2}\left(\left(A_{u}+A_{v}+\left(A_{u}A_{v}\right)^{2/3}\right)\left(A_{u}A_{v}\right)^{\epsilon}A_{u}\right)^{1/2}\\
 & \ll & X\left(\left(\frac{1}{A_{v}}+\frac{1}{A_{u}}+\frac{1}{\left(A_{u}A_{v}\right)^{1/3}}\right)\left(A_{u}A_{v}\right)^{\epsilon}\right)^{1/2}\\
 & \ll & X\left(\frac{1}{X^{\dagger^{1/2}}}\right)^{1/2}\\
 & \ll & X/\log^{1+9^{k}\left(1+2\cdot3^{k}\right)}X.
\end{eqnarray*}

\textit{Case 2:} Only one of $\Phi_{k}\left(u,v\right)$ and $\Phi_{k}\left(v,u\right)$
is nonzero.

Without loss of generality assume $\Phi_{k}\left(u,v\right)$ is nonzero.
\begin{lem}
For any linked indices $u$ and $v$ with $\Phi_{k}\left(u,v\right)$
and $\Phi_{k}\left(v,u\right)$ both nonzero, 
\begin{equation}
S_{k}\left(X,\mathbf{A}\right)\le\sum_{D_{w},w\neq u,v}\left|\sum_{d_{v}\in\mathcal{O}}\sum_{D_{u}\in\mathbb{Z}}a\left(D_{u}\right)a\left(d_{v}\right)\left(\frac{D_{u}}{d_{v}}\right)_{3}\right|.\label{eq:dirichletchar}
\end{equation}
with $\left|a\left(d_{i}\right)\right|\le1$, where the summations
over elements of $d_{i}\in\mathcal{O}$ congruent to $1\mod3$ which
are products of split primes, $\mu^{2}\left(N\left(d_{i}\right)\right)=1$
and $N\left(d_{i}\right)\le\Delta A_{i}$ and $D_{i}\in\mathbb{Z}$
a product of primes congruent to $1\mod3$, $\mu^{2}\left(D_{i}\right)=1$
and $D_{i}\le\Delta A_{i}$.
\end{lem}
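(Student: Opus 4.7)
My plan is to mimic the template of Lemma~\ref{lem:For-any-linked} but exploit the new asymmetry of the enclosing \textit{Case~2}: $\Phi_k(v,u)=0$ while $\Phi_k(u,v)$ is the surviving nonzero coupling. This kills the cross-dependence into the $u$-side characters, removes the need for cubic reciprocity altogether, and lets us keep $D_u$ on the integer side while only the $v$-side is lifted to $\mathcal{O}$.

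Starting from Proposition~\ref{prop:mainexpression}, peel the indices $u,v$ out of the outer product exactly as in the preceding lemma; writing $D'=D_u D_v$,
\[
S_k(X,\mathbf{A})=\frac{1}{2\cdot 3^k}\sum_{D_w,\,w\neq u,v}\sum_{(\chi_\ell),\,\ell\mid D/D'}\frac{1}{3^{k\omega(D/D')}}\sum_{D_u,D_v}\sum_{(\chi_\ell),\,\ell\mid D'}\frac{\mu^2(D)}{3^{k\omega(D')}}\prod_y\prod_{\ell\mid D_y}\chi_\ell\!\Bigl(\prod_z D_z^{\Phi_k(z,y)}\Bigr),
\]
then split according to which of $D_u$, $D_v$, $D/D'$ is divisible by $3$, treating each sub-case identically to the $3\mid D_v$ case of Lemma~\ref{lem:For-any-linked}; it suffices to work through one.

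For $\ell\mid D_u$, the argument of $\chi_\ell$ is $\prod_z D_z^{\Phi_k(z,u)}$, which by $\Phi_k(u,u)=0$ and the assumption $\Phi_k(v,u)=0$ collapses to $\prod_{w\neq u,v}D_w^{\Phi_k(w,u)}$: no dependence on $D_v$ at all. Summing the two nontrivial cubic Dirichlet characters at each such $\ell$ costs at most $2^{\omega(D_u)}$, which combined with the $3^{-k\omega(D_u)}$ prefactor yields a coefficient $a(D_u)$ depending only on $D_u$ and the outer $D_w$, bounded by $(2/3^k)^{\omega(D_u)}\leq 1$ for $k\geq 1$. For $\ell\mid D_v$, carry out the same cubic lift as in Lemma~\ref{lem:For-any-linked}: tuples $(\chi_\ell)_{\ell\mid D_v}$ correspond bijectively to $d_v\in\mathcal{O}$ with $d_v\equiv 1\pmod 3$, split-prime factorisation, $N(d_v)=D_v$ and $\mu^2(N(d_v))=1$, with each tuple becoming $(\cdot/d_v)_3$. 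The $D_u$-dependence collapses to $(D_u/d_v)_3^{\Phi_k(u,v)}$; since $\Phi_k(u,v)\in\{1,2\}$ and squaring a cubic character corresponds to the involution $d_v\mapsto\overline{d_v}$ that preserves the admissible set, the exponent is removed by relabelling, and the remaining factors (outer characters, the $\mu^2$ mass, and $\Phi_k(w,v)$-contributions with $w\neq u$) bundle into $a(d_v)$ with $|a(d_v)|\leq 1$.

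The only delicate point is bookkeeping: verifying that the $2^{\omega(D_u)}$ loss incurred by unconstrained Dirichlet-character summation over $\ell\mid D_u$ is fully absorbed by the $3^{-k\omega(D_u)}$ normalisation (giving $|a(D_u)|\leq 1$ uniformly and using $k\geq 1$), and checking that the three $3\mid D'$ sub-cases split cleanly as in the preceding lemma. With these routine checks in place the bound follows.
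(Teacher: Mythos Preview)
Your argument is correct and is exactly the ``less involved version'' of Lemma~\ref{lem:For-any-linked} that the paper invokes: with $\Phi_k(v,u)=0$ the $(D_v/d_u)_3$ factor disappears, so no cubic reciprocity is needed, the $v$-side alone is lifted to $\mathcal{O}$, and the free character sum over $\ell\mid D_u$ is absorbed by the $3^{-k\omega(D_u)}$ weight just as you check. The only remark is that the hypothesis ``both nonzero'' in the lemma statement is a slip in the paper---the surrounding Case~2 assumes precisely your asymmetric setup $\Phi_k(u,v)\neq 0$, $\Phi_k(v,u)=0$, which you have read correctly.
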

\begin{proof}
This is a less involved version of the proof of Lemma \ref{lem:For-any-linked}.
\end{proof}
The above expression is no longer symmetric in $u$ and $v$ hence
we must consider several subcases. In the case when the variables
$A_{u}$ and $A_{v}$ are close together, specifically $A_{u}<A_{v}<A_{u}^{2}$
or $A_{v}<A_{u}<A_{v}^{2}$ we apply Theorem \ref{prop:bs dirichlet epsilon}.
For example if $A_{u}<A_{v}<A_{u}^{2}$ we get 
\begin{eqnarray*}
S_{k}\left(X,\mathbf{A}\right) & \le & \sum_{D_{w},w\neq u,v}A_{v}^{1/2}\left(\sum_{d_{v}}\left|\sum_{D_{u}}a\left(D_{u}\right)\left(\frac{D_{u}}{d_{v}}\right)_{3}\right|^{2}\right)^{1/2}\\
 & \ll & \sum_{D_{w},w\neq u,v}A_{v}^{1/2}\left(\left(A_{v}^{11/9}+A_{v}^{2/3}A_{u}\right)\left(A_{u}A_{v}\right)^{\epsilon}A_{u}\right)^{1/2}\\
 & \ll & X\left(\left(\frac{A_{v}^{2/9}}{A_{u}}+\frac{1}{A_{v}^{1/3}}\right)\left(A_{u}A_{v}\right)^{\epsilon}\right)^{1/2}\\
 & \ll & X/\log^{\left(1+9^{k}\left(1+2\cdot3^{k}\right)\right)/2}X
\end{eqnarray*}

Now suppose $A_{u}^{2}<A_{v}$. Define a cubic Hecke character
\[
\psi:\left(d_{v}\right)\longmapsto\left(\frac{D_{u}}{d_{v}}\right)_{3}
\]
 of modulus $9D_{u}$.

As in case 1 define the Gauss sum
\[
g\left(r,n\right)=\sum_{d\in\left(\mathcal{O}/9D_{u}\right)^{\times}}\psi\left(d\right)\check{e}\left(rd/n\right)
\]
which satisfies for $\left(r,9D_{u}\right)=1$
\begin{eqnarray*}
g\left(r,D_{u}\right) & = & \overline{\psi\left(r\right)}g\left(1,D_{u}\right).
\end{eqnarray*}
As in case 1 we get 
\begin{eqnarray*}
\sum_{D_{u}}\left|\sum_{d_{v}}a\left(d_{v}\right)\left(\frac{D_{u}}{d_{v}}\right)_{3}\right|^{2} & \ll & \sum_{D_{u}}\sum_{d\in\left(\mathcal{O}/9D_{u}\right)^{\times}}\left|\sum_{s_{1},s_{2}}a\left(s_{1},s_{2}\right)\check{e}\left(dd_{v}/D_{u}\right)\right|^{2}
\end{eqnarray*}
where $s_{i}\in\mathbb{Z}$ and $s_{i}\ll A_{v}^{1/2}$. 

Write $d/D_{u}=\left(d_{1},d_{2}\right)$ and $d_{v}=\left(s_{1},s_{2}\right)$.
Then $dd_{v}/D_{u}=\left(d_{1}s_{1}-d_{2}s_{2},d_{1}s_{2}+d_{2}s_{1}-d_{2}s_{2}\right)$
and 
\begin{eqnarray*}
\mathrm{tr}\left(dd_{v}/D_{u}\right) & = & s_{1}\left(2d_{1}-d_{2}\right)+s_{2}\left(-d_{1}-d_{2}\right)\\
 & = & \left(s_{1},s_{2}\right)\cdot\left(2d_{1}-d_{2},d_{1}-d_{2}\right).
\end{eqnarray*}
 Hence we can rewrite the right hand side of the above inequality
as
\[
\sum_{D_{u}}\sum_{d\in\left(\mathcal{O}/9D_{u}\right)^{\times}}\left|\sum_{s_{1},s_{2}}a\left(s_{1},s_{2}\right)\exp\left(2\pi i\left(s_{1},s_{2}\right)\cdot\left(2d_{1}-d_{2},d_{1}-d_{2}\right)\right)\right|^{2}
\]
By the same argument as before the set $S$ of $\left(2d_{1}-d_{2},d_{1}-d_{2}\right)$
(where $d/D_{u}$ runs over the above summation) is distinct and $1/A_{u}$-spaced
since we can write 
\[
\frac{d}{D_{u}}=\frac{a}{D_{u}}+\frac{b}{D_{u}}\zeta
\]
 for some $a,b\in\mathbb{Z}$. Thus by Proposition \ref{prop:multivar large seive}
we get 
\begin{eqnarray*}
\sum_{D_{u}}\sum_{d\in\left(\mathcal{O}/9D_{u}\right)^{\times}}\left|\sum_{s_{1},s_{2}}a\left(s_{1},s_{2}\right)\exp\left(2\pi i\left(s_{1},s_{2}\right)\cdot\left(2d_{1}-d_{2},d_{1}-d_{2}\right)\right)\right|^{2} & \ll & \left(A_{u}^{2}+A_{v}\right)A_{v}.
\end{eqnarray*}
Returning to the expression we started with we get
\begin{eqnarray*}
\left|\sum_{d_{v}\in\mathcal{O}}\sum_{D_{u}\in\mathbb{Z}}a\left(D_{u}\right)a\left(d_{v}\right)\left(\frac{D_{u}}{d_{v}}\right)_{3}\right| & \ll & A_{u}^{1/2}\left(\left(A_{u}^{2}+A_{v}\right)A_{v}\right)^{1/2}\\
 & = & A_{u}A_{v}\left(\frac{A_{u}}{A_{v}}+\frac{1}{A_{u}}\right)^{1/2}.
\end{eqnarray*}
By the above bound we get 
\begin{eqnarray*}
S_{k}\left(X,\mathbf{A}\right) & \ll & \sum_{D_{w},w\neq u,v}A_{u}A_{v}\left(\frac{A_{u}}{A_{v}}+\frac{1}{A_{u}}\right)^{1/2}\\
 & \ll & X\left(\frac{1}{A_{v}^{1/2}}+\frac{1}{A_{u}}\right)^{1/2}\\
 & \ll & X/\log^{1+9^{k}\left(1+2\cdot3^{k}\right)}X.
\end{eqnarray*}
 The case when $A_{v}^{2}<A_{u}$ follows directly from \ref{prop:bs dirichlet large}.

Finally summing over all such $\mathbf{A}$ we get 
\[
\sum_{\mathbf{A}\mathrm{\ satisifes\ }\left(\ref{eq:family3}\right)}S_{k}\left(X,\mathbf{A}\right)\ll X/\log X.
\]

\subsection{The fourth family, the case $p=3$}

Now consider the fourth family which consists of those $\mathbf{A}$
which are not in the third family and such that there are two linked
indices $A_{u}$ and $A_{v}$ and 
\begin{equation}
A_{u}>X^{\ddagger},2\le A_{v}<X^{\dagger}.\label{eq:family4}
\end{equation}
Note the condition $2\le A_{v}<X^{\dagger}$ is forced by the assumption
that $\mathbf{A}$ is not in the third family. We in fact consider
the collection of all indices $v$ which satisfy the above condition.
Denote this set by $S$.
\begin{lem}
For $\mathbf{A}$, $u$ and $S$ as defined above we have 
\[
S_{k}\left(X,\mathbf{A}\right)\ll\sum_{D_{v},v\neq u}\sum_{\left(\chi_{l}\right),l\mid D_{v}}\frac{1}{3^{\omega\left(\prod_{w\neq u}D_{w}\right)}}\left|\sum_{d_{u}\in\mathcal{O}}\frac{\mu^{2}\left(D\right)}{3^{\omega\left(D_{u}\right)}}\psi\left(d_{u}\right)\right|
\]
where we denote $D_{u}=N\left(d_{u}\right)$ and $D=\prod_{w}D_{w}$
and the summation is over $d_{u}\equiv1\mod3$ which are products
of split primes and $A_{u}\le N\left(d_{u}\right)\le\Delta A_{u}$
and $D_{i}\in\mathbb{Z}$ a product of primes congruent to $1\mod3$
and $A_{v}\le D_{v}\le\Delta A_{v}$. Furthermore $\psi$ is a Hecke
character given by
\end{lem}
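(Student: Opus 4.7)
The plan is to reorganize the expression (\ref{eq:mainexpression}) restricted to the box $\mathbf{A}$ so that all of the dependence on $D_u$ (together with the characters $\chi_l$ for $l\mid D_u$) is packaged into a single cubic Hecke character $\psi$ of $\mathcal{O}$ evaluated at a variable $d_u\in\mathcal{O}$ with $d_u\equiv 1\bmod 3$, built from split primes, of norm $D_u$. Once this is achieved, the triangle inequality applied to the outer sums over $D_w$ ($w\neq u$) and over the outer characters $\chi_l$ ($l\mid D_w$) immediately yields the stated bound.

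The starting observation is that each tuple of non-trivial cubic characters $(\chi_l)_{l\mid D_u}$ corresponds bijectively to a choice of $d_u\in\mathcal{O}$ with $N(d_u)=D_u$, $d_u\equiv 1\bmod 3$, built out of split primes, via $\chi_l(\cdot)=\left(\frac{\cdot}{\pi_l}\right)_3$ where $\pi_l\mid d_u$ is the corresponding prime of $\mathcal{O}$. Hence the inner double sum $\sum_{D_u}\sum_{(\chi_l),\,l\mid D_u}$ is nothing but $\sum_{d_u\in\mathcal{O}}$, which is already how the lemma has recorded it.

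With that identification in place, I would split the character product $\prod_y\prod_{l\mid D_y}\chi_l\bigl(\prod_z D_z^{\Phi_k(z,y)}\bigr)$ according to whether $y=u$ or $y\neq u$ and, in each case, whether $z=u$ or $z\neq u$. Since $\Phi_k(u,u)=0$, the $(y,z)=(u,u)$ piece vanishes, and the factors actually depending on $d_u$ fall into exactly two families: the ``denominator'' type $\prod_{l\mid D_u}\chi_l\bigl(\prod_{z\neq u}D_z^{\Phi_k(z,u)}\bigr)$, which by multiplicativity collapses to a single cubic symbol $\left(\frac{\prod_{z\neq u}D_z^{\Phi_k(z,u)}}{d_u}\right)_3$, and the ``numerator'' type $\prod_{y\neq u}\prod_{l\mid D_y}\chi_l\bigl(D_u^{\Phi_k(u,y)}\bigr)$, which via $D_u=d_u\bar{d_u}$ and cubic reciprocity (exactly as in (\ref{eq:cubic recip})) becomes a product of symbols $\left(\frac{d_u}{\pi_l}\right)_3^{\pm 1}$ and $\left(\frac{\bar{d_u}}{\pi_l}\right)_3^{\pm 1}$. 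Their product is a cubic Hecke character $\psi$ of $\mathcal{O}$ whose conductor divides $9\prod_{w\neq u}D_w$ and which is completely determined by the outer data.

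Finally, with the $d_u$-dependence isolated as $\psi(d_u)$, moving the absolute value past the outer sums over $D_w$ and over $(\chi_l)_{l\mid D_w,\,w\neq u}$ (noting that all character factors pulled outside have modulus one) yields the displayed bound: the weight $\mu^2(D)/3^{k\omega(D)}$ splits as $1/3^{\omega(\prod_{w\neq u}D_w)}\cdot \mu^2(D)/3^{\omega(D_u)}$, with the residual $1/3^{(k-1)\omega(D)}\le 1$ and the constant $1/(2\cdot 3^k)$ absorbed into the $\ll$. The main point of care is verifying that $\psi$ really is a Hecke character of the claimed conductor; this is a mechanical bookkeeping exercise using the same cubic reciprocity calculation that appeared in Lemma \ref{lem:For-any-linked}, with the ramified prime $3$ handled as there, by first splitting the sum according to whether $3\mid D_u$ or $3\mid D_w$ for some $w\neq u$ and then treating each subcase separately.
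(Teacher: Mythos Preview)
Your proposal is correct and follows essentially the same approach as the paper, which simply says the proof is ``similar to that of Lemma~\ref{lem:For-any-linked}'' and notes the case split on whether $3\mid D_u$; you have spelled out exactly what that similarity entails. One minor remark: you need not invoke cubic reciprocity on the ``numerator'' factors $\prod_{y\neq u}\prod_{l\mid D_y}\chi_l(D_u^{\Phi_k(u,y)})$, since $D_u=N(d_u)$ already makes this a Hecke character of $d_u$ directly (and this is how the paper records $\psi$), but your route works equally well.
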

\[
\psi\left(d_{u}\right)=\left(\frac{\prod_{v}D_{v}^{\Phi_{k}\left(v,u\right)}}{d_{u}}\right)_{3}\prod_{v\in S}\prod_{l\mid D_{v}}\chi_{l}\left(D_{u}^{\Phi_{k}\left(u,y\right)}\right)\chi_{3}\left(D_{u}\right)^{i}.
\]

\begin{proof}
The proof is similar to that of Lemma \ref{lem:For-any-linked}. As
in the previous subsection we consider separately the cases when $3\mid D_{u}$
and $3\nmid D_{u}$. In the first case we replace $d_{u}$ by $d_{u}/\left(1-\zeta\right)$
and sum in the range $A_{u}/3\le N\left(d_{u}\right)\le\Delta A_{u}/3$.
The factor of $\chi_{3}$ in the definition of $\psi$ appears only
when $3\mid D_{v}$ for some $v\in S$.
\end{proof}
We remark that the $v\in S$ by assumption satisfy $A_{v}<X^{\dagger}$.
The modulus of $\psi$ is $f_{\psi}=9\prod_{v\in S}D_{v}$ and $N\left(f_{\psi}\right)\le81\left(X^{\dagger}\right)^{2\cdot9^{k}}$.
Note the $\mu^{2}$ factor means the non-zero terms of the summation
above come from $d_{u}$ whose prime factors are all split. 

Now looking at the inner sum

\begin{eqnarray*}
\sum_{d_{u}}\frac{\mu^{2}\left(D\right)}{3^{\omega\left(d_{u}\right)}}\psi\left(d_{u}\right) & = & \sum_{l=0}^{\Omega}\frac{1}{3^{l}}\sum_{\pi_{1},\ldots,\pi_{l-1}}\psi\left(\pi_{1}\cdots\pi_{l-1}\right)\\
 &  & \times\sum_{\pi_{l}}\mu^{2}\left(\prod_{w\neq u}D_{w}N\left(\pi_{1}\cdots\pi_{l}\right)\right)\psi\left(\pi_{l}\right)\\
\end{eqnarray*}
 where $\left(\pi_{i}\right)$ are split primes with $N\left(\pi_{i}\right)\le\Delta A_{u}$
and $A_{u}\le N\left(\pi_{l}\right)\le\Delta A_{u}/N\left(\pi_{1}\cdots\pi_{l-1}\right)$.
Then again looking at the inner sum 
\[
\sum_{\pi_{l}}\mu^{2}\left(\prod_{w\neq u}D_{w}N\left(\pi_{1}\cdots\pi_{l}\right)\right)\psi\left(\pi_{l}\right)\ll\sum_{\pi_{l},\left(\pi_{l},f_{\psi}\right)=1}\psi\left(\pi_{l}\right)+\Omega+A_{u}^{1/2}
\]
where the $\Omega$ term comes from removing $\mu^{2}$ and the $A_{u}^{1/2}$
term comes from including inert primes in the sum- recall that in
our original summation we only included primes above $l$ such that
$l\equiv1\mod3$ and $l=3$. These are exactly the split primes. The
number of inert primes $l$ in $\mathcal{O}$ with $l^{2}=Nl<A_{u}$
is $A_{u}^{1/2}$. Now we apply Proposition \ref{prop:general siegel walfisz}
with $f_{\psi}$ and $x=\Delta A_{u}/N\left(\pi_{1}\cdots\pi_{l-1}\right)$
to get 
\begin{eqnarray*}
\sum_{\pi_{l},\left(\pi_{l},f_{\psi}\right)=1}\psi\left(\pi_{l}\right) & \ll & \frac{N\left(f_{\psi}\right)^{\epsilon}x\left(\log x\right)^{2}}{\exp\left(c^{4}\left(\log x\right)^{1/2}/3^{2+\epsilon}N\left(f_{\psi}\right)^{\epsilon}\right)}\\
 & \ll & \frac{N\left(f_{\psi}\right)^{\epsilon}x\left(\log x\right)^{2}}{\exp\left(c^{4}\left(\log x\right)^{1/2}/3^{2+5\epsilon}\left(X^{\dagger}\right)^{2\cdot9^{k}\epsilon}\right)}\\
 & \ll & X^{\dagger2\cdot9^{k}\epsilon}\frac{A_{u}}{N\left(\pi_{1}\cdots\pi_{l-1}\right)}\frac{\left(\log x\right)^{2}}{\exp\left(c^{4}\left(\log X\right)^{\eta/4-2\cdot9^{k}\epsilon}/3^{2+5\epsilon}\right)}
\end{eqnarray*}
where in the second inequality we use $N\left(f_{\psi}\right)^{\epsilon}\le81\left(X^{\dagger}\right)^{2\cdot9^{k}\epsilon}$
which implies $\exp\left(-1/N\left(f_{\psi}\right)^{\epsilon}\right)\ll\exp\left(-1/81^{\epsilon}\left(X^{\dagger}\right)^{2\cdot9^{k}\epsilon}\right)$.
In the third inequality we use 
\begin{eqnarray*}
x & \ge & N\left(\pi_{l}\right)\\
 & \ge & A_{u}^{1/l}\\
 & \ge & \exp\left(\log^{\eta/2}X\right)
\end{eqnarray*}
and that $X^{\dagger}$ is some fixed power of $\log X$. Summing
over $\pi_{1}\cdots\pi_{l-1}$ gives the bound 
\[
\sum_{\pi_{1},\ldots,\pi_{l-1}}\frac{1}{N\left(\pi_{1}\cdots\pi_{l-1}\right)}\le\log A_{u}.
\]
 Finally we use $A_{v}^{\epsilon}<X^{\dagger}$ for all $v\in S$
and end up with 
\[
\ll A_{u}\frac{\left(\log X\right)^{4\left(1+9^{k}\left(1+2\cdot3^{k}\right)\right)+3}}{\exp\left(c^{4}\left(\log X\right)^{\eta/4-2\cdot9^{k}\epsilon}/3^{2+5\epsilon}\right)}
\]
 and summing over the remaining variables $D_{v}$ gives 
\[
S_{k}\left(X,\mathbf{A}\right)\ll X\frac{\left(\log X\right)^{4\left(1+9^{k}\left(1+2\cdot3^{k}\right)\right)+3}}{\exp\left(c^{4}\left(\log X\right)^{\eta/4-2\cdot9^{k}\epsilon}/3^{2+5\epsilon}\right)}.
\]
Then summing over all $\mathbf{A}$
\begin{eqnarray*}
\sum_{\mathbf{A}\mathrm{\ satisifes\ }\left(\ref{eq:family4}\right)}S_{k}\left(X,\mathbf{A}\right) & \ll & X\frac{\left(\log X\right)^{4\left(1+9^{k}\left(1+2\cdot3^{k}\right)\right)+3+9^{k}\left(1+2\cdot3^{k}\right)}}{\exp\left(c^{4}\left(\log X\right)^{\eta/4-2\cdot9^{k}\epsilon}/3^{2+5\epsilon}\right)}\\
 & = & o\left(X\right).
\end{eqnarray*}

\subsection{The third and fourth families for all $p$}

Assume GRH for Artin $L$-functions. The missing ingredients required
to extend our result to general $p$ unconditionally are analogs of
Proposition \ref{prop:bs cubic epsilon } and \ref{prop:bs dirichlet epsilon}.
That is, we cannot deal with the case in family 3 when $A_{u}$ and
$A_{v}$ are close together. We will instead give a proof assuming
GRH. The following argument replaces the sections containing families
3 and 4 for $p=3$ above.

Suppose $\mathbf{A}$ does not belong to families 1 and 2, that is
$\mathbf{A}$ does not satisfy (\ref{eq:family1}) and (\ref{eq:family2}).
In particular there are at least $p^{k-1}+1$ variables $A_{w}$ which
satisfy $A_{w}>X^{\ddagger}$. Let $A_{u}$ be the largest of these.
Let $S$ be the set of indices linked with $u$ and suppose it is
not empty.

Let $\zeta=e^{2\pi i/p}$ and let $\mathcal{O}=\mathbb{Z}\left[\zeta\right]$
the ring of integers of $\mathbb{Q}\left(\zeta\right)$ which is a
degree $p-1$ extension of $\mathbb{Q}$.

Define 
\[
\left[\frac{A}{B}\right]_{p}=\prod_{l\mid B}\frac{\left(\chi_{l}+\cdots+\chi_{l}^{p-1}\right)}{p}\left(A\right).
\]
 Then we have
\begin{equation}
S_{k}\left(X,\mathbf{A}\right)\ll\sum_{D_{w},w\neq u}\left|\sum_{D_{u}}\mu^{2}\left(D\right)\left(\frac{D_{u}}{c_{u}}\right)_{p}\left[\frac{C_{u}}{D_{u}}\right]_{p}\right|\label{eq:grhbound}
\end{equation}
where $C_{u}=\prod_{w}D_{w}^{e_{w}}$ for $e_{w}\le p-1$ and some
choice of $c_{u}\in\mathcal{O}$ with $N\left(c_{u}\right)=\prod_{w\in S}D_{w}$.
The range of summation is $A_{i}\le D_{i}\le\Delta A_{i}$ and divisibility
by $p$ is handled as in the $p=3$ cases above.

Note that 
\[
\chi:D_{u}\longmapsto\left(\frac{D_{u}}{c_{u}}\right)_{p}
\]
 is a Dirichlet character of modulus $N\left(c_{u}\right)$. Let $K=\mathbb{Q}\left(\zeta,\sqrt[p]{C_{u}}\right)$
and identify $\phi:\mathrm{Gal}\left(K/\mathbb{Q}\left(\zeta\right)\right)\longrightarrow\mu_{p}$
where $\mu_{p}$ is the group of roots of unity and define a representation
of $\mathrm{Gal}\left(K/\mathbb{Q}\left(\zeta\right)\right)$ 
\[
\tau\left(g\right)=\left(\begin{array}{ccc}
\phi\left(g\right)\\
 & \ddots\\
 &  & \phi\left(g\right)^{p-1}
\end{array}\right).
\]
 Let $\rho$ be the induction of $\tau$ to $\mathrm{Gal}\left(K/\mathbb{Q}\right)$.
Then since $D_{u}$ is a product of primes $q\equiv1\left(p\right)$
we have
\[
\left[\frac{C_{u}}{D_{u}}\right]_{p}=\frac{\prod_{q\mid D_{u}}\mathrm{tr}\rho\left(F_{q}\right)}{p^{\omega\left(D_{u}\right)}}.
\]
 Let $M$ be the degree $p$ cyclic field corresponding to the character
$\chi$ and let $L=KM$. Let $\sigma$ be the representation of $\mathrm{Gal}\left(L/\mathbb{Q}\right)$
given by 
\[
\sigma\left(q\right)=\left(\frac{q}{c_{u}}\right)_{p}\otimes\rho\left(q\right).
\]
Now consider the $L$-function
\[
L\left(s,\sigma\right)=\prod_{q}\det\left(I-\frac{\sigma\left(F_{q}\right)}{q^{-s}}\right)^{-1}
\]
where the product is over primes congruent to $1\mod3$ and not dividing
$pN\left(c_{u}\right)=p\prod_{w\in S}D_{w}$. Similarly define the
function 
\[
L\left(s\right)=\prod_{q}\left(1+\frac{\mathrm{tr}\sigma\left(F_{q}\right)}{pq^{-s}}\right).
\]
Then one can show that 
\[
L\left(s\right)=L\left(s,\sigma\right)^{1/p}F\left(s\right)
\]
where $F\left(s\right)=\prod_{q}\left(1+O\left(1/q^{2s}\right)\right)$
is absolutely convergent for $s>1/2$. 

By assumption of GRH $L\left(s,\sigma\right)$ has no zeros to the
right of $1/2$ and hence $L\left(s,\sigma\right)^{1/p}$ has no poles.
Then by a standard argument (see for instance \cite{Davenport}) we
get 
\begin{eqnarray*}
\sum_{d<x}\mu^{2}\left(pN\left(c_{u}\right)d\right)\frac{\prod_{q\mid d}\mathrm{tr}\rho\left(F_{q}\right)}{p^{\omega\left(d\right)}} & = & \int_{2-iT}^{2+iT}L\left(s\right)x^{s}\frac{ds}{s}+O\left(\frac{x^{2}}{T\log x}\right)\\
 & \ll & x^{1/2+\epsilon}\int_{1/2+\epsilon-iT}^{1/2+\epsilon+iT}\left|L\left(s,\sigma\right)^{1/p}\right|\frac{ds}{\left|s\right|}\\
 &  & +O\left(\frac{x^{2}\left(T\cdot N\left(c_{u}\right)\right)^{\epsilon}}{T}\right)+O\left(\frac{x^{2}}{T\log x}\right)\\
 & \ll & x^{1/2+\epsilon}\left(T\cdot N\left(c_{u}\right)\right)^{\epsilon}\int_{1/2+\epsilon-iT}^{1/2+\epsilon+iT}\frac{1}{\left|s\right|}ds\\
 &  & +O\left(\frac{x^{2}\left(T\cdot N\left(c_{u}\right)\right)^{\epsilon}}{T}\right)+O\left(\frac{x^{2}}{T\log x}\right)\\
 & \ll & x^{1/2+\epsilon}N\left(c_{u}\right)^{\epsilon}T^{\epsilon}+O\left(\frac{x^{2}\left(T\cdot N\left(c_{u}\right)\right)^{\epsilon}}{T}\right)+O\left(\frac{x^{2}}{T\log x}\right)\\
 & \ll & x^{1/2+\epsilon'}N\left(c_{u}\right)^{\epsilon}
\end{eqnarray*}
where the last line follows by setting $T=x^{3}$. Then we bound
the inner sum in $\left(\ref{eq:grhbound}\right)$ as 
\begin{eqnarray*}
\sum_{D_{u}}\mu^{2}\left(D\right)\left(\frac{D_{u}}{c_{u}}\right)_{p}\left[\frac{C_{u}}{D_{u}}\right]_{p} & \ll & \sum_{D_{u}}\mu^{2}\left(D\right)\frac{\prod_{q\mid d}\mathrm{tr}\rho\left(F_{q}\right)}{p^{\omega\left(D_{u}\right)}}\\
 & \ll & A_{u}^{1/2+\epsilon}N\left(c_{u}\right)^{\epsilon}.
\end{eqnarray*}
 Note that 
\begin{eqnarray*}
N\left(c_{u}\right)^{\epsilon} & = & \left(\prod_{w\in S}D_{w}\right)^{\epsilon}\\
 & \le & D_{u}^{p^{2k}\epsilon}.
\end{eqnarray*}
Then summing over all the remaining $D_{w}$ we get 
\begin{eqnarray*}
S_{k}\left(X,\mathbf{A}\right) & \le & X/A_{u}^{1/4}\\
 & = & X/X^{\ddagger1/4}\\
 & = & o\left(X\right).
\end{eqnarray*}
 This argument shows that we can remove all $\mathbf{A}$ in which
there is a variable larger than $X^{\ddagger}$ and linked with any
other $A_{w}>1$. This is equivalent to removing the $\mathbf{A}$
which belong to families 3 or 4.

We summarize the results of this section in the following theorem
\begin{thm}
\label{thm:removed four families}Let $\sum'_{\mathbf{A}}S_{k}\left(X,\mathbf{A}\right)$
denote a summation over all tuples $\mathbf{A}$ which do not belong
to any of the 4 families, that is they do not satisfy any of (\ref{eq:family1}),
(\ref{eq:family2}), (\ref{eq:family3}), (\ref{eq:family4}). Then
\[
S_{k}\left(X\right)=\sideset{}{_{\mathbf{A}}^{'}}\sum S_{k}\left(X,\mathbf{A}\right)+o\left(X\right).
\]
\end{thm}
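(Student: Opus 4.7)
The plan is to prove the theorem by partitioning $S_k(X) = \sum_{\mathbf{A}} S_k(X, \mathbf{A})$ and showing that the contribution from tuples $\mathbf{A}$ belonging to each of the four families is $o(X)$. Since the total number of nonempty $\mathbf{A}$ is $O((\log X)^{p^2(1+(p-1)p^k)})$, any per-$\mathbf{A}$ bound of the form $S_k(X, \mathbf{A}) \ll X/(\log X)^C$ with $C$ exceeding this polylogarithmic count will aggregate to $o(X)$, so I would treat each family separately. I would also verify at the outset that attention may be restricted to $\mathbf{A}$ with every $D_u$ satisfying $\omega(D_u) \le \Omega := e p^{2k}(\log\log X + B_0)$; this follows by applying Lemma \ref{lem:There-exists-an} directly to the tail contribution and summing over $\ell \ge \Omega$.

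For family 1 (the boundary constraint $\prod_i \Delta A_i > X$), my approach is to bound the full family's contribution by $\sum_{\Delta^{-p^{2k}} X \le D \le X} ((p-1) p^k)^{\omega(D)}$ using Lemma \ref{lem:Let--with}; Taylor-expanding $\Delta^{-p^{2k}}$ for $\Delta = 1 + \log^{-(p-1) p^k} X$ yields $O(X/\log X)$. For family 2 (at most $p^{k-1}$ variables exceed $X^{\ddagger}$), I would split according to the number $r$ of large variables, factor each summand as a product of divisor-type sums (one over small variables, one over large), and apply Lemma \ref{lem:Let--with} to each factor, obtaining a bound of the form $O(X(\log X)^{\eta (p-1) p^k - 1/p})$, which is $o(X)$ for $\eta$ sufficiently small.

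The main obstacle is family 3 (two linked indices $A_u, A_v > X^{\dagger}$) in the unconditional $p = 3$ case, where the bounds depend delicately on the relative sizes of $A_u$ and $A_v$ and on which of $\Phi_k(u,v)$, $\Phi_k(v,u)$ vanish. I would use cubic reciprocity to rewrite the inner character sum as a bilinear form in cubic residue symbols, as in Lemma \ref{lem:For-any-linked}, then apply Cauchy-Schwarz and a large sieve inequality. When $A_u, A_v$ are comparable in size, Propositions \ref{prop:bs cubic epsilon } or \ref{prop:bs dirichlet epsilon} (with the $(MN)^{\epsilon}$ factor) are required; when they are far apart, a Gauss-sum manipulation converts the symbol sum into an exponential sum, which can be treated by the multivariable large sieve of Proposition \ref{prop:multivar large seive}. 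The case in which only one of $\Phi_k(u, v)$, $\Phi_k(v, u)$ is nonzero must be handled separately, since one cubic symbol then degenerates into a Dirichlet character. Family 4 (a single large variable linked with small ones) is then handled by fixing the largest $A_u > X^{\ddagger}$, absorbing the small linked variables into a Hecke character $\psi$ of modulus $O((X^{\dagger})^{O(1)})$, and applying the generalized Siegel-Walfisz estimate of Proposition \ref{prop:general siegel walfisz}. The exponential decay in that proposition, against a polylogarithmic modulus, gives a savings of $\exp(-c(\log X)^{\eta/4})$ that outpaces the polylogarithmic count of $\mathbf{A}$.

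For general $p$ under GRH, my plan is to replace the family 3 and family 4 arguments with a unified $L$-function argument. Given $\mathbf{A}$ not in families 1 or 2, I would fix the largest variable $A_u$ and the set $S$ of indices linked with $u$, construct an Artin representation $\sigma$ built from the Kummer extension $\mathbb{Q}(\zeta, \sqrt[p]{C_u})$ twisted by the Dirichlet character $D_u \mapsto (D_u/c_u)_p$, and analyse the auxiliary function $L(s) = L(s, \sigma)^{1/p} F(s)$, where $F(s)$ is absolutely convergent for $\mathrm{Re}\, s > 1/2$. Under GRH, $L(s, \sigma)^{1/p}$ is holomorphic to the right of $\mathrm{Re}\, s = 1/2$, so a standard Perron-type contour shift to $\mathrm{Re}\, s = 1/2 + \epsilon$ (with $T = x^3$) yields a bound of size $A_u^{1/2 + \epsilon} N(c_u)^{\epsilon}$ for the inner sum. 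Using $N(c_u) \le A_u^{p^{2k}\epsilon}$ and summing over the remaining $D_w$ gives $S_k(X, \mathbf{A}) \ll X/A_u^{1/4} \ll X/X^{\ddagger 1/4}$, which simultaneously covers families 3 and 4. Assembling the per-family bounds completes the proof.
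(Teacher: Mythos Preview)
Your proposal is correct and follows essentially the same approach as the paper: the same preliminary reduction to $\omega(D_u)\le\Omega$, the same direct estimates via Lemma~\ref{lem:Let--with} for families~1 and~2, the same bilinear cubic-residue treatment (cubic reciprocity, Cauchy--Schwarz, and the appropriate large sieve depending on the relative sizes of $A_u,A_v$ and on which of $\Phi_k(u,v),\Phi_k(v,u)$ vanishes) for family~3, the generalized Siegel--Walfisz argument for family~4, and the unified Artin $L$-function contour-shift under GRH for general~$p$. The only minor addition in the paper is the explicit use of Proposition~\ref{prop:bs dirichlet large} in the subcase $A_v^2<A_u$ of Case~2 of family~3, which you fold into your general description of the ``far apart'' range.
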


\section{\label{sec:Computing-the--th}Computing the $k$-th Moment}

We will use the notation $\mathcal{N}\left(k\right)=\mathcal{N}\left(k,p\right)$
which we recall is the number of vector subspaces of $\mathbb{F}_{p}^{k}$.
We now want to prove Theorem \ref{thm:mainthm}. We will do this by
proving:
\begin{thm}
\label{thm: final s(x) k}
\[
S_{k}\left(X\right)=p^{-k}\left(\mathcal{N}\left(k+1\right)-\mathcal{N}\left(k\right)\right)\sum_{n<X}\left(p-1\right)^{\omega\left(n\right)-1}+o\left(X\right).
\]
\end{thm}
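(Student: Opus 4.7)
The plan is to start from Theorem \ref{thm:removed four families}, which reduces the computation of $S_k(X)$ to that of $\sideset{}{_{\mathbf{A}}^{'}}\sum S_k(X,\mathbf{A})$ up to an $o(X)$ error, where the primed sum runs over tuples $\mathbf{A}$ avoiding all four families. For each surviving $\mathbf{A}$, set $L = \{u : A_u > X^{\ddagger}\}$ and $T = \{u : D_u > 1\}$. Removal of family 2 forces $|L| \ge p^{k-1}+1$, and removal of families 3 and 4 jointly forces $\Phi_k(u,v) = 0$ for every $u \in L$ and every $v \in T$, since any linked $v \in T$ would either satisfy $A_v \ge X^{\dagger}$ (putting $\mathbf{A}$ in family 3) or $2 \le A_v < X^{\dagger}$ (putting $\mathbf{A}$ in family 4). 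This combinatorial constraint---that the large indices are $\Phi_k$-unlinked from the whole non-trivial support---is the structural input from which the main term will be extracted.

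The next step is to evaluate the inner character sum via orthogonality. Because the $D_u$ are pairwise coprime, each prime $\ell$ divides exactly one $D_v$, and the inner sum $\sum_{\chi_\ell}\chi_\ell\bigl(\prod_u D_u^{\Phi_k(u,v)}\bigr)$ equals $p-1$ when the argument is a $p$-th power in $\mathbb{F}_\ell^{\times}$ and $-1$ otherwise. The vanishing constraint annihilates every exponent coming from a large variable, so only the $D_u$ with $u \notin L$ affect the $p$-th power test. The main contribution should come from \emph{diagonal} configurations in which $\Phi_k(u,v) \equiv 0 \pmod p$ for every pair $(u,v) \in T\times T$: then the argument is identically $1$ at every prime, and the inner character sum collapses to the deterministic value $(p-1)^{\omega(D)}$.

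I would then enumerate these diagonal configurations by classifying admissible supports $T \subseteq (\mathbb{F}_p\times\mathbb{F}_p)^k$ on which $\Phi_k$ vanishes identically, weighted by the $p^{-k\omega(D)}$ averaging factor, combined with the $1/((p-1)p^k)$ prefactor and the $(p-1)^{\omega(D)}$ character sum value. Distributing the $\omega(D)$ primes of a given $D$ among the admissible indices of $T$ gives a multinomial factor whose algebraic shape, after summation, should collapse to $p^{-k}\bigl(\mathcal{N}(k+1) - \mathcal{N}(k)\bigr)$; numerically this is the number of subspaces of $\mathbb{F}_p^{k+1}$ not contained in a fixed hyperplane, which one expects to realize naturally via the bilinear part $B(u,v) = \sum_i u_{i,1} v_{i,2}$ of $\Phi_k$ on $(\mathbb{F}_p\times\mathbb{F}_p)^k$. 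Summing over $D = \prod_u D_u < X$ finally produces $\sum_{n<X}(p-1)^{\omega(n)-1}$, reflecting that a squarefree $n$ with $\omega(n) = r$ is the conductor of exactly $(p-1)^{r-1}$ degree $p$ cyclic extensions.

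The principal difficulty lies in the combinatorial step: classifying the $\Phi_k$-isotropic supports on $(\mathbb{F}_p\times\mathbb{F}_p)^k$, tracking the multinomial weights from partitioning the primes of $D$, and showing that the total matches $p^{-k}(\mathcal{N}(k+1) - \mathcal{N}(k))$ exactly. A secondary technical point is confirming that the non-diagonal contributions---where $\Phi_k$ does not vanish but the argument nonetheless lands in $(\mathbb{F}_\ell^{\times})^p$ by coincidence---are absorbed into $o(X)$. This should follow from the size bounds $A_u \le X^{\ddagger}$ for $u \notin L$, the divisor bound $\omega(D_u) \le \Omega$ retained from the initial reduction, and the counting Lemmas \ref{lem:There-exists-an} and \ref{lem:Let--with}.
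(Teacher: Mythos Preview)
Your outline is on the right track but misses the paper's central structural lemma, and this gap is precisely what leaves both of your stated difficulties unresolved. The key fact (Lemmas \ref{lem:Let--be} and \ref{lem:Let--be-1}) is that every maximal unlinked subset of $(\mathbb{F}_p\times\mathbb{F}_p)^k$ is an affine translate of a $k$-dimensional subspace of the special form $p(V)^\perp\oplus p(V)$; in particular each maximal unlinked set has exactly $p^k$ elements, and any two distinct ones meet in at most $p^{k-1}$ points. Now your set $L$ is itself unlinked (any two of its indices both exceed $X^{\dagger}$, so a link would land in family 3) and has $|L|\ge p^{k-1}+1$, hence lies in a \emph{unique} maximal unlinked set $\mathcal{U}$. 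Since every $v\in T$ is unlinked with all of $L$, the set $L\cup\{v\}$ extends to some maximal unlinked set, which by uniqueness must be $\mathcal{U}$. Therefore $T\subseteq\mathcal{U}$ for every surviving $\mathbf{A}$: the support is \emph{automatically} fully unlinked, and your ``non-diagonal contributions'' simply do not occur. There is nothing to bound there.

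The same classification carries the combinatorial step. Partitioning the surviving $\mathbf{A}$ by their associated $\mathcal{U}$ (and re-including the family-2 terms for each $\mathcal{U}$ at cost $o(X)$), the character sum is identically $(p-1)^{\omega(D)}$, and for fixed $D$ there are $|\mathcal{U}|^{\omega(D)}=p^{k\omega(D)}$ ways to distribute its primes among the indices in $\mathcal{U}$, cancelling the $p^{-k\omega(D)}$ factor. Each $\mathcal{U}$ thus contributes $\frac{1}{(p-1)p^k}\sum_{D<X}\mu^2(D)(p-1)^{\omega(D)}=p^{-k}\sum_{n<X}(p-1)^{\omega(n)-1}$. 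It remains only to count maximal unlinked sets: by the classification, each $r$-dimensional $V_0\subseteq\mathbb{F}_p^k$ determines a unique admissible $V$ with $p(V)=V_0$, and this $V$ has $p^{k-r}$ distinct unlinked translates, so $U=\sum_r p^{k-r}n(k,r)=\sum_r p^{r}n(k,r)=\mathcal{N}(k+1)-\mathcal{N}(k)$ by Lemma \ref{lem:The-function-}. Your proposed direct enumeration of $\Phi_k$-isotropic supports with multinomial weights would ultimately have to rediscover exactly this affine-subspace structure to close.
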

Note $S_{k}\left(X\right)=\sum_{K,D_{K}<X^{p-1}}\left|\mathrm{im}\left(1-\sigma_{K}\right)\right|^{k}$
is a sum over discriminants up to $X^{p-1}$. Furthermore $\sum_{n<X}\mu^{2}\left(n\right)\left(p-1\right)^{\omega\left(n\right)-1}+o\left(X\right)$
is the number of degree $p$ cyclic fields with discriminant up to
$X^{p-1}$, which is also equal to $cX+o\left(X\right)$. Thus it
follows immediately from combining these facts with the above theorem
that 

\[
\lim_{X\longrightarrow\infty}\frac{\sum_{K,D_{K}<X}\left|\mathrm{im}\left(1-\sigma_{K}\right)\right|^{k}}{\sum_{K,D_{K}<X}1}=\frac{\mathcal{N}\left(k+1\right)-\mathcal{N}\left(k\right)}{p^{k}}.
\]
We start by proving some facts about maximal unlinked sets of indices.
It will turn out that for each $\mathbf{A}$ in the sum in Theorem
\ref{thm:removed four families} all the indices $u$ with $A_{u}>1$
form a maximal unlinked set.

For $k=1$ write each index $u=u_{1}p+u_{2}\in\mathbb{\mathbb{Z}}/p^{2}\mathbb{Z}$
in base $p$ as $u=\left(u_{1},u_{2}\right)$. Recall that for $u=\left(u_{1},u_{2}\right)$
and $v=\left(v_{1},v_{2}\right)$ we defined
\[
\Phi\left(u,v\right)=\left(u_{1}\right)\left(v_{2}-u_{2}\right).
\]
 Now if we represent each index as $u=\left(u_{1},\ldots,u_{k}\right)$
as $\left(u_{11},u_{12},u_{21},u_{22},\ldots,u_{k1},u_{k2}\right)\in\mathbb{F}_{p}^{2k}$
then 
\[
\Phi_{k}\left(u,v\right)=\sum_{i=1}^{k}\left(u_{i1}\right)\left(v_{i2}-u_{i2}\right).
\]

Next we show that translating a maximal unlinked set to the origin
in $\mathbb{F}_{p}^{2k}$ yields a subspace. 
\begin{lem}
\label{lem:Let--be}Let $\mathcal{U}$ be a maximal unlinked set and
let $a\in\mathcal{U}$. Let $V=\mathcal{U}-a$. Then $V\subset\mathbb{F}_{p}^{2k}$
is a subspace. 
\end{lem}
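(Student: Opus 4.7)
The plan is to introduce a bilinear form on $\mathbb{F}_p^{2k}$ that makes $\Phi_k$ transparent. Set $B(u,v) = \sum_{i=1}^k u_{i1} v_{i2}$ and $Q(u) = B(u,u)$; then $\Phi_k(u,v) = B(u, v-u) = B(u,v) - Q(u)$. The unlinked condition on $\mathcal{U}$ (taken in both orderings, as justified by the case split of Lemma \ref{lem:For-any-linked}) becomes
\[
B(u,v) = Q(u) \quad\text{and}\quad B(v,u) = Q(v) \quad\text{for all } u,v \in \mathcal{U}.
\]

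Next I translate by $a$. For $x,y \in V$, substitute $u = x+a$, $v = y+a$ into the first identity. Expanding $Q(x+a) = Q(x) + Q(a) + B(x,a) + B(a,x)$ and canceling common terms yields
\[
B(x,y) + B(a,y) = Q(x) + B(a,x) \qquad (\ast)
\]
for all $x,y \in V$. Since $a \in \mathcal{U}$ gives $0 \in V$, specializing $y=0$ in $(\ast)$ forces $B(a,x) = -Q(x)$ on $V$, while specializing $x=0$ forces $B(a,y) = 0$ on $V$. Combining these gives $Q \equiv 0$ on $V$ and $B(a,V) = 0$, and then $(\ast)$ reduces to $B|_{V\times V} \equiv 0$. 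The symmetric identity $B(v,u) = Q(v)$ gives, by an identical computation, $B(V,a) = 0$.

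These vanishing statements make the closure argument immediate via maximality of $\mathcal{U}$. Given $x, y \in V$, $\lambda \in \mathbb{F}_p$, and any $z+a \in \mathcal{U}$ with $z \in V$, expanding $B(\lambda x + y + a,\, z + a)$ and $Q(\lambda x + y + a)$ and using $B|_{V\times V} = 0$, $B(a,V) = B(V,a) = 0$, and $Q|_V = 0$ collapses each quantity to $Q(a)$, so $\Phi_k(\lambda x + y + a, z+a) = 0$. The same calculation works with the arguments swapped. Hence $\lambda x + y + a$ is unlinked with every element of $\mathcal{U}$, so maximality forces $\lambda x + y + a \in \mathcal{U}$, i.e.\ $\lambda x + y \in V$. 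Combined with $0 \in V$, this shows $V$ is a subspace.

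The step requiring the most care is not any single computation but the bookkeeping: one must keep straight the two unlinked conditions (the relations involving $\Phi_k(u,v)$ and $\Phi_k(v,u)$), and use both of them when extracting the three vanishing statements $Q|_V = 0$, $B|_{V\times V} = 0$, $B(a,V) = B(V,a) = 0$. Once these are in hand, closure under the affine combinations is a one-line verification and the lemma follows from maximality.
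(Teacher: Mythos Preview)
Your approach via the bilinear form $B(u,v)=\sum_i u_{i1}v_{i2}$ is a genuinely different route from the paper's. The paper argues directly: for $u,v,w,a\in\mathcal U$ it expands $\Phi_k(u+v-a,w)$ and $\Phi_k(w,u+v-a)$ coordinate by coordinate and shows each vanishes using only the pairwise unlinked relations among $u,v,w,a$, then concludes $u+v-a\in\mathcal U$ by maximality (closure under addition suffices over $\mathbb F_p$). Your method instead first extracts the structural facts $Q|_V=0$, $B|_{V\times V}=0$, $B(a,V)=0$ and then checks closure; this makes the mechanism more transparent and handles scalar multiples in one stroke, at the cost of a little more setup.

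There is one incorrect intermediate claim. The ``symmetric identity'' $B(v,u)=Q(v)$ is just the first identity with the ordered pair reversed, so after translating by $a$ it yields exactly $(\ast)$ with $x\leftrightarrow y$; it does \emph{not} produce $B(V,a)=0$. In fact $B(V,a)=0$ can fail: for $k=1$, $p=3$, take $\mathcal U=\{(0,1),(1,1),(2,1)\}$ and $a=(1,1)$; then $V=\{(t,0):t\in\mathbb F_3\}$ and $B((1,0),a)=1\neq 0$. Fortunately your closure step survives without this claim. Expanding as you indicate, the only term not killed by $B|_{V\times V}=0$, $B(a,V)=0$, $Q|_V=0$ is $B(\lambda x+y,a)$, and it appears identically in both $B(\lambda x+y+a,\,z+a)$ and $Q(\lambda x+y+a)$, hence cancels in $\Phi_k$; with the arguments swapped the surviving term is $B(z,a)$ on both sides and again cancels. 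So replace ``collapses each quantity to $Q(a)$'' by ``collapses each quantity to $Q(a)+B(\lambda x+y,a)$'' (respectively $Q(a)+B(z,a)$), drop the appeal to $B(V,a)=0$, and your argument is complete.
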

\begin{proof}
Let $u,v\in\mathcal{U}$. We need to show that $\left(u-a\right)+\left(v-a\right)+a=u+v-a\in\mathcal{U}.$
Since $\mathcal{U}$ is maximal we show $u+v-a$ is unlinked with
every element of $\mathcal{U}$. Let $w\in\mathcal{U}$. We have
\begin{eqnarray*}
\Phi_{k}\left(u+v-a,w\right) & = & \sum_{i=1}^{k}\left(u_{i1}+v_{i1}-a_{i1}\right)\left(w_{i2}-u_{i2}-v_{i2}+a_{i2}\right)\\
 & = & \sum_{i=1}^{k}\left(u_{i1}+v_{i1}-a_{i1}\right)\left(\left(a_{i2}-u_{i2}\right)+\left(w_{i2}-v_{i2}\right)\right)\\
 & = & \sum_{i=1}^{k}v_{i1}\left(a_{i2}-u_{i2}\right)+u_{i1}\left(w_{i2}-v_{i2}\right)-a_{i1}\left(w_{i2}-v_{i2}\right)\\
 & = & 0
\end{eqnarray*}
since for instance $v_{i1}\left(-u_{i2}+a_{i2}\right)=-v_{i1}\left(u_{i2}-v_{i2}\right)+v_{i1}\left(-v_{i2}+a_{i2}\right)$
and $u,v,w,a$ are all unlinked. Similarly
\begin{eqnarray*}
\Phi_{k}\left(w,u+v-a\right) & = & \sum_{i=1}^{k}\left(w_{i1}\right)\left(u_{i2}+v_{i2}-a_{i2}-w_{i2}\right)\\
 & = & 0.
\end{eqnarray*}
This proves the lemma.
\end{proof}
It is easy to see that the $V$ in the above lemma does not depend
on the choice of $a$.

Let $p:\mathbb{F}_{p}^{2k}\longrightarrow\mathbb{F}_{p}^{k}$ be the
projection onto the even coordinates, that is 
\[
p\left(u_{11},u_{12},u_{21},u_{22},\ldots,u_{k1},u_{k2}\right)=\left(u_{12},u_{22},\ldots,u_{k2}\right)
\]
 and let $q$ be the projection onto the odd coordinates. Let $V_{1}=\ker p$
and let $V_{2}=\ker q$. Then $\mathbb{F}_{p}^{2k}=V_{1}\oplus V_{2}$.
Next we prove 
\begin{lem}
\label{lem:Let--be-1}Let $V$ be a subspace of $\mathbb{F}_{p}^{2k}$.
Then there is a maximal unlinked set $\mathcal{U}$ such that $\mathcal{U}=V+a$
if and only if $V=p\left(V\right)^{\perp}\oplus p\left(V\right)$
where $p\left(V\right)^{\perp}$ is taken in $\mathbb{F}_{p}^{k}$. 
\end{lem}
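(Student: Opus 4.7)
The plan is to interpret $\Phi_k(u,v) = q(u) \cdot (p(v) - p(u))$, where $\cdot$ denotes the standard inner product on $\mathbb{F}_p^k$, and then translate ``$\mathcal{U} = V + a$ is unlinked'' into algebraic conditions on $V$ and $a$. Expanding the two-sided unlinkedness $\Phi_k(u+a, v+a) = 0 = \Phi_k(v+a, u+a)$ for arbitrary $u, v \in V$ and subtracting yields $q(w) \cdot p(w) = 0$ for every $w = v - u \in V$; specializing to $u = 0$ then gives $q(a) \in p(V)^\perp$, after which the remaining identity simplifies to condition (i): $q(u) \cdot p(v) = 0$ for all $u, v \in V$. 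Throughout, I identify $p(V)^\perp \subset \mathbb{F}_p^k$ with its embedding in $V_1$ (zero in the even coordinates) and $p(V)$ with its embedding in $V_2$ (zero in the odd coordinates), so that the claimed decomposition sits inside $\mathbb{F}_p^{2k}$.

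For the $(\Leftarrow)$ direction, I would take $a = 0$ and prove that $V$ itself is a maximal unlinked set. The condition on $a$ is automatic, and (i) holds because any $u, v \in V$ satisfy $q(u) \in p(V)^\perp$ and $p(v) \in p(V)$ by the splitting. For maximality, suppose $x \notin V$ and $V \cup \{x\}$ is still unlinked. Unlinkedness of $x$ against every $(z,0) \in p(V)^\perp \oplus 0 \subset V$ forces $p(x) \in (p(V)^\perp)^\perp = p(V)$, while unlinkedness against $(0,w) \in 0 \oplus p(V)$ (together with the $v = 0$ case to handle the diagonal $q(x) \cdot p(x)$ term) forces $q(x) \in p(V)^\perp$. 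Then $x = (q(x), 0) + (0, p(x)) \in V$, a contradiction.

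For the $(\Rightarrow)$ direction, maximality of $\mathcal{U} = V + a$ says that every $y$ for which $\mathcal{U} \cup \{y+a\}$ would remain unlinked must satisfy $y \in V$. Unwinding this condition, with (i) and $q(a) \in p(V)^\perp$ already in hand, gives the three constraints (A) $p(y) \in q(V)^\perp$, (B) $q(y) \cdot p(y) = 0$, (C) $q(y) \in p(V)^\perp$, together with an auxiliary condition $q(a) \cdot p(y) = 0$ that holds automatically in each test case below. Plugging in $y = (z,0)$ with $z \in p(V)^\perp$ satisfies (A), (B), (C) trivially, forcing $p(V)^\perp \oplus 0 \subseteq V$. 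Plugging in $y = (0,w)$ with $w \in p(V)$ satisfies (B) and (C) trivially, while (A) reduces to $w \in q(V)^\perp$, which holds because (i) gives $p(V) \subseteq q(V)^\perp$; this forces $0 \oplus p(V) \subseteq V$. For the reverse containment, any $v \in V$ decomposes as $v = (q(v), 0) + (0, p(v))$, with the first summand in $p(V)^\perp \oplus 0$ by (i) and the second in $0 \oplus p(V)$ tautologically; the two embedded subspaces intersect only at $0$, so the sum is direct and equals $V$.

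The main obstacle is purely notational bookkeeping: one must keep $p(V)$ and $p(V)^\perp$ (viewed as subspaces of $\mathbb{F}_p^k$) distinct from their natural embeddings into $V_1, V_2 \subset \mathbb{F}_p^{2k}$, and carefully track the two-sidedness of the unlinked relation (both $\Phi_k(u,v) = 0$ and $\Phi_k(v,u) = 0$). Once the equivalent form (i) together with $q(a) \in p(V)^\perp$ is established, the two implications are short and essentially dual to each other, with the test vectors $(z, 0)$ and $(0, w)$ playing symmetric roles.
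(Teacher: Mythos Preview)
Your argument is correct. The core identification $\Phi_k(u,v)=q(u)\cdot(p(v)-p(u))$ and the derivation of condition~(i), namely $q(V)\perp p(V)$, together with $q(a)\in p(V)^{\perp}$, match what the paper obtains (the paper gets (i) via the translation trick $\mathcal{U}+w=\mathcal{U}$ for $w\in V$, but the content is the same). One cosmetic slip: when you combine $\Phi_k(u+a,v+a)=0$ and $\Phi_k(v+a,u+a)=0$ you should \emph{add} them, not subtract, to obtain $q(w)\cdot p(w)=0$; this does not affect the logic.

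Where you genuinely diverge from the paper is in the treatment of maximality. The paper argues structurally: once $V\subseteq p(V)^{\perp}\oplus p(V)=:W$ is known, one notes $p(W)=p(V)$, so $W$ itself satisfies the same containment and hence $W+a$ is unlinked; maximality of $\mathcal U$ then forces $V=W$. For the converse it invokes the previous lemma and a dimension count ($\dim V=k$). Your approach is instead by explicit test vectors: you feed $(z,0)$ with $z\in p(V)^{\perp}$ and $(0,w)$ with $w\in p(V)$ into the maximality condition and read off the two containments directly, and symmetrically you prove maximality in the $(\Leftarrow)$ direction by showing any extra $x$ must have $p(x)\in p(V)$ and $q(x)\in p(V)^{\perp}$. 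This is more hands-on and self-contained (you never use Lemma~\ref{lem:Let--be}), and as a bonus you make the constraint $q(a)\in p(V)^{\perp}$ explicit --- something the paper's proof actually needs for its $W+a$ step but does not isolate until the subsequent proposition. The paper's route is a little slicker once the previous lemma is available; yours is more robust and transparent about the role of $a$.
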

\begin{proof}
First assume $\mathcal{U}$ is unlinked, not necessarily maximal.
Suppose that $\mathcal{U}=V+a$. Then for any $w\in V$ we have $\mathcal{U}=\mathcal{U}+w$.
Let $u'\in V$ and let $u=u'+a$, so $u\in\mathcal{U}$. Note $a\in\mathcal{U}$.
Then we have
\begin{eqnarray*}
0 & = & \Phi_{k}\left(u+w,a+w\right)\\
 & = & \sum_{i=1}^{k}\left(u_{i1}+w_{i1}\right)\left(a_{i2}+w_{i2}-u_{i2}-w_{i2}\right)\\
 & = & \Phi_{k}\left(u,a\right)+\sum_{i=1}^{k}w_{i1}\left(a_{i2}-u_{i2}\right)\\
 & = & -\sum_{i=1}^{k}w_{i1}u'_{i2}.
\end{eqnarray*}
Since $w,u'\in V$ were arbitrary this shows $q\left(V\right)\in p\left(V\right)^{\perp}$.
Thus $V\subset p\left(V\right)^{\perp}\oplus p\left(V\right)$. 

Now suppose $V\subset p\left(V\right)^{\perp}\oplus p\left(V\right)$.
Let $a=0$. Let $\mathcal{U}=V$. For $v,w\in V$ 
\begin{eqnarray*}
\Phi_{k}\left(v,w\right) & = & \sum_{i=1}^{k}v_{i1}\left(w_{i2}-v_{i2}\right)\\
 & = & 0
\end{eqnarray*}
 by the assumption and since $w-v\in V$. Hence $\mathcal{U}$ is
unlinked.

Now if $V$ satisfies the above then clearly $p\left(V\right)^{\perp}\oplus p\left(V\right)$
also does so if $\mathcal{U}$ is maximal then we have equality $V=p\left(V\right)^{\perp}\oplus p\left(V\right)$.

Conversely note that equality $V=p\left(V\right)^{\perp}\oplus p\left(V\right)$
implies $\dim V=k$. If $\mathcal{U}$ is not maximal then let $\mathcal{U}'$
be a maximal unlinked set containing it. By Lemma \ref{lem:Let--be}
and the first part there is a subspace $V'=\mathcal{U}'-a$ which
will contain $V$ and such that $V'=p\left(V'\right)^{\perp}\oplus p\left(V'\right)$.
Hence $\dim V'=k$ also and we must have $V'=V$ so $\mathcal{U}=\mathcal{U}'$
is maximal.
\end{proof}
Hence the maximal unlinked sets are all obtained as translates of
subspaces which satisfy the conditions of Lemma \ref{lem:Let--be-1}.
Note for such subspaces $V$ that $\dim V=\dim W\oplus p\left(V\right)=k$
and hence every maximal unlinked set has size $p^{k}$. 

With this we can rewrite $S_{k}\left(X\right)$ in a form closer to
Theorem \ref{thm: final s(x) k}.
\begin{prop}
Let $U$ be the number of maximal unlinked sets. Then

\[
\sum_{K,D_{K}<X^{p-1}}\left|\mathrm{im}\left(1-\sigma_{K}\right)\right|^{k}=\left(\frac{U}{p^{k}}\right)\sum_{n<X}\left(p-1\right)^{\omega\left(n\right)-1}+o\left(X\right)
\]
\end{prop}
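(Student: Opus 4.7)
The plan is to apply Theorem~\ref{thm:removed four families} to reduce to $\sum'_{\mathbf{A}} S_k(X,\mathbf{A})$, then exploit the unlinked structure to collapse the inner character sum in (\ref{eq:mainexpression}). First, I observe that every surviving $\mathbf{A}$ has $T:=\{u:A_u>X^{\ddagger}\}$ of cardinality at least $p^{k-1}+1$ (from the family 2 removal), and $T$ is pairwise unlinked as well as unlinked from every other index $w$ with $A_w>1$ (from the strengthened removal of families 3 and 4, or its GRH analogue). By Lemma~\ref{lem:Let--be-1}, each maximal unlinked set is a translate of a $k$-dimensional subspace of $\mathbb{F}_p^{2k}$ of the stated form, and two distinct such affine subspaces of $\mathbb{F}_p^{2k}$ meet in at most $p^{k-1}$ points; hence $T$ extends to a \emph{unique} maximal unlinked set $\mathcal{U}$ of size $p^k$.

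Second, I compute the contribution from $\mathbf{A}$ whose support is exactly $\mathcal{U}$. Since $\Phi_k(u,v)=0$ for every $u,v\in\mathcal{U}$, the argument $\prod_u D_u^{\Phi_k(u,v)}$ appearing in (\ref{eq:mainexpression}) equals $1$ for every relevant $v$, so every character factor is trivial. Summing over the tuple $(\chi_l)_{l\mid D}$ then yields $(p-1)^{\omega(D)}$, since each local group of nontrivial order-$p$ characters has size $p-1$. For each squarefree $D<X$ with prime factors of the allowed type, the number of ordered coprime factorizations of $D$ into the $p^k$ positions of $\mathcal{U}$ is $(p^k)^{\omega(D)}$, so the weights collapse as
\[
\frac{1}{(p-1)p^k}\cdot\frac{\mu^2(D)\,(p-1)^{\omega(D)}\,(p^k)^{\omega(D)}}{p^{k\omega(D)}}=\frac{\mu^2(D)\,(p-1)^{\omega(D)-1}}{p^k},
\]
yielding $\frac{1}{p^k}\sum_{D<X}\mu^2(D)(p-1)^{\omega(D)-1}$ per $\mathcal{U}$. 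Summing over the $U$ maximal unlinked sets produces the proposed main term.

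Third, I bound the remaining surviving contributions as $o(X)$. These come in two kinds: (a) support strictly smaller than $\mathcal{U}$ (some $D_u=1$ for $u\in\mathcal{U}$), and (b) support extending to some $u\notin\mathcal{U}$, necessarily with $1<A_u\le X^{\ddagger}$ and $u$ linked to a small index $v\in\mathcal{U}\setminus T$. In case (a) the character-sum analysis above still applies verbatim, and the contribution is bounded by $\binom{p^k}{s}\frac{1}{(p-1)p^k}\sum_{D<X}\mu^2(D)(s(p-1)/p^k)^{\omega(D)}=O(X(\log X)^{s/p^k-1})=o(X)$ via Lemma~\ref{lem:Let--with} whenever the effective support size satisfies $s<p^k$. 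Case (b) is handled with the crude bound $|\sum_{\chi_l}\chi_l(\cdot)|\le p-1$ combined with the sub-polynomial cap $A_u\le X^{\ddagger}=\exp(\log^{\eta}X)$ on the extra indices. The main obstacle, I expect, is the careful bookkeeping in case (b): enumerating the configurations of small linked indices outside $\mathcal{U}$ and verifying, together with the residual boundary effects from the $\Delta$-dyadic discretization, that the total error is $o(X)$ --- exactly the sort of care required in the $p=2$ treatment of \cite{fk1}.
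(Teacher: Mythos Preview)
Your approach is essentially the same as the paper's, and the core computation is correct. Two remarks will tighten it.

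First, your case~(b) is vacuous. You already observed in your first paragraph that every index $w$ with $A_w>1$ is unlinked from every element of $T$; since $|T|\ge p^{k-1}+1$ and any two maximal unlinked sets meet in at most $p^{k-1}$ points, $T\cup\{w\}$ is contained in a \emph{unique} maximal unlinked set, namely $\mathcal{U}$. Hence the full support of any surviving $\mathbf{A}$ lies in $\mathcal{U}$, and there is nothing to bound in~(b).

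Second, the bookkeeping in step~2 versus case~(a) is slightly tangled. Your count of $(p^k)^{\omega(D)}$ ordered coprime factorizations already includes factorizations with empty parts, so the quantity you compute in step~2 is the sum over \emph{all} tuples with support contained in $\mathcal{U}$, not just those with support exactly $\mathcal{U}$. The genuine discrepancy when you sum over all $\mathcal{U}$ is therefore not a ``remaining surviving contribution'' but an overcount of tuples whose support lies in the intersection of two distinct maximal unlinked sets; such tuples have support of size at most $p^{k-1}$ and are absorbed by the family~2 estimate. This is exactly how the paper phrases it: partition $\sum'_{\mathbf{A}}S_k(X,\mathbf{A})$ by $\mathcal{U}$, then add back the family~2 terms (at most $p^{k-1}$ variables exceeding $X^{\ddagger}$) at cost $o(X)$ per $\mathcal{U}$, and evaluate each block directly. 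Your case~(a) bound via Lemma~\ref{lem:Let--with} is a valid alternative route to the same conclusion.
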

\begin{proof}
From the work above it is easy to show that the largest possible intersection
of two maximal unlinked sets has size $p^{k-1}$. Hence a set of $p^{k-1}+1$
unlinked variables determines a unique maximal unlinked set. Thus
each family $\mathbf{A}$ in Theorem \ref{thm:removed four families}
corresponds to a unique maximal unlinked set. We can partition $\sum'{}_{\mathbf{A}}S\left(X,\mathbf{A}\right)$
according to these maximal unlinked sets, and additionally adding
back the previous error term consisting of sums $S\left(X,\mathbf{A}\right)$
which have at most $p^{k-1}$ variables with $A_{i}>X^{\ddagger}$
at the cost of an error $X/\log X$ we get

\begin{eqnarray*}
\sideset{}{_{\mathbf{A}}^{'}}\sum S\left(X,\mathbf{A}\right) & = & \frac{U}{p^{k}\left(p-1\right)}\sum_{0\le\prod_{j=0}^{p^{k}}n_{j}\le X}\mu^{2}\left(\prod_{j=0}^{p^{k}}n_{j}\right)\left(\frac{p-1}{p}\right)^{\omega\left(\prod_{j=0}^{p^{k}}n_{j}\right)}+o\left(X\right)\\
 & = & \frac{U}{p^{k}}\sum_{n<X}\mu^{2}\left(n\right)\left(p-1\right)^{\omega\left(n\right)-1}+o\left(X\right).
\end{eqnarray*}
\end{proof}
The final step of the proof will be the next proposition. Define $n\left(k,r\right)$
to be the number of $r$-dimensional subspaces of $\mathbb{F}_{p}^{k}$.
We will need two properties of this function which can be found in
Lemmas 1 and 3 from \cite{fk1}:
\begin{lem}
\label{lem:The-function-}The function $n\left(k,r\right)$ satisfies
\[
n\left(k,r\right)=n\left(k,k-r\right),
\]
\[
\sum_{r=0}^{k}p^{r}n\left(k,r\right)=\mathcal{N}\left(k+1\right)-\mathcal{N}\left(k\right).
\]
\end{lem}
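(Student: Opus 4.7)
The plan is to establish both identities directly from the combinatorics of subspaces of $\mathbb{F}_p^k$, since $n(k,r)$ is a Gaussian binomial coefficient and the stated facts are classical. For the first identity I would use duality, and for the second I would set up a standard recurrence in $k$ that telescopes.

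For $n(k,r) = n(k, k-r)$, I fix the standard non-degenerate symmetric bilinear form on $\mathbb{F}_p^k$ and send $W \mapsto W^{\perp}$. Non-degeneracy gives $\dim W + \dim W^{\perp} = k$ and $(W^{\perp})^{\perp} = W$, so this is an involution pairing $r$-dimensional subspaces bijectively with $(k-r)$-dimensional ones.

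For the second identity, I would first prove the recurrence
\[
n(k+1, r) = p^{r} \, n(k, r) + n(k, r-1).
\]
Writing $\mathbb{F}_{p}^{k+1} = \mathbb{F}_{p}^{k} \oplus \mathbb{F}_{p} e$ and letting $\pi$ be projection onto $\mathbb{F}_{p}^{k}$, any $r$-dimensional subspace $U \subset \mathbb{F}_{p}^{k+1}$ falls into one of two cases. Either $U \cap \mathbb{F}_{p} e = 0$, in which case $\pi|_{U}$ is injective onto an $r$-dimensional subspace $\pi(U) \subset \mathbb{F}_{p}^{k}$ and $U$ is the graph of a linear functional $\pi(U) \to \mathbb{F}_{p}$, contributing $p^{r} \cdot n(k, r)$ possibilities in total; or $\mathbb{F}_{p} e \subset U$, in which case one checks directly that $U = \pi(U) \oplus \mathbb{F}_{p} e$ is uniquely determined by the $(r-1)$-dimensional subspace $\pi(U) \subset \mathbb{F}_{p}^{k}$, contributing $n(k, r-1)$ possibilities. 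Summing the recurrence over $r = 0, \ldots, k+1$ (with the conventions $n(k, -1) = n(k, k+1) = 0$) yields $\mathcal{N}(k+1) = \sum_{r=0}^{k} p^{r} n(k, r) + \mathcal{N}(k)$, which rearranges to the claimed formula.

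Neither step should present a serious obstacle; the main care needed is in verifying the recurrence, where one must confirm that every $r$-dimensional $U$ is counted exactly once across the two cases and that the graph-of-a-functional parametrization contributes precisely the factor $p^{r} = |\mathrm{Hom}(\pi(U), \mathbb{F}_{p})|$ rather than, say, $(p^{r}-1)/(p-1)$. Once the recurrence is in hand the telescoping is immediate.
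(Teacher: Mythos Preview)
Your argument is correct. Both steps are standard: the duality map $W\mapsto W^{\perp}$ is a bijection between $r$- and $(k-r)$-dimensional subspaces, and your case split according to whether $U$ meets the line $\mathbb{F}_p e$ yields exactly the recurrence $n(k+1,r)=p^{r}n(k,r)+n(k,r-1)$, which telescopes upon summation over $r$ to give $\mathcal{N}(k+1)-\mathcal{N}(k)=\sum_{r=0}^{k}p^{r}n(k,r)$.

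The paper itself does not supply a proof of this lemma at all; it simply cites Lemmas~1 and~3 of Fouvry--Kl\"uners \cite{fk1} and moves on. So your write-up is strictly more self-contained than what appears here. For the record, Fouvry--Kl\"uners obtain the symmetry $n(k,r)=n(k,k-r)$ directly from the explicit product formula for the Gaussian binomial coefficient, and they derive the summation identity from essentially the same recurrence you prove (their Lemma~2 is $n(k+1,r)=p^{r}n(k,r)+n(k,r-1)$), though they phrase the count slightly differently. Your graph-of-a-functional parametrization is a clean way to see the $p^{r}$ factor and is equivalent to theirs.
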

\begin{prop}
The number of maximal unlinked sets $U$ is
\[
U=\mathcal{N}\left(k+1\right)-\mathcal{N}\left(k\right).
\]
 
\end{prop}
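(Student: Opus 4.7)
The plan is to parametrize maximal unlinked sets by pairs $(V, a + V)$, where $V \subseteq \mathbb{F}_p^{2k}$ is a subspace of the form $V = p(V)^\perp \oplus p(V)$ and $a + V$ is a coset, then to count admissible cosets for each such $V$ and sum over $V$. By Lemma \ref{lem:Let--be}, each maximal unlinked set $\mathcal{U}$ yields a well-defined subspace $V := \mathcal{U} - a$ (independent of $a \in \mathcal{U}$), and by Lemma \ref{lem:Let--be-1} this $V$ satisfies $V = p(V)^\perp \oplus p(V)$; setting $W := p(V) \subseteq \mathbb{F}_p^k$, we can write $V = V_W := W^\perp \oplus W$. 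Conversely, any $W \subseteq \mathbb{F}_p^k$ gives an admissible $V_W$, so counting maximal unlinked sets amounts to counting, for each $W$, the number of cosets of $V_W$ whose representatives $a$ make $V_W + a$ actually unlinked.

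Next I would fix $W$ and compute $\Phi_k(v+a, v'+a)$ for $v, v' \in V_W$. A short expansion, using $\Phi_k(v, v') = 0$ (since $V_W$ itself is unlinked), collapses the expression to the bilinear pairing of $q(a) \in \mathbb{F}_p^k$ against $p(v' - v) \in W$. Hence $V_W + a$ is unlinked if and only if $q(a) \in W^\perp$. The set of such $a$ has cardinality $p^k \cdot |W^\perp| = p^{2k - \dim W}$, and quotienting by $|V_W| = p^k$ leaves $p^{k - \dim W}$ admissible cosets. Each admissible coset has size $p^k$, which matches the known maximum possible size of an unlinked set, so each is automatically a maximal unlinked set. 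This coset-counting is really the only place where computation is needed; I expect it to be the main obstacle, though it is essentially a one-line linear-algebra observation once $\Phi_k$ is unwound correctly.

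Finally, summing over all $W \subseteq \mathbb{F}_p^k$ and grouping by dimension,
\[
U \;=\; \sum_{W \subseteq \mathbb{F}_p^k} p^{\,k - \dim W} \;=\; \sum_{r=0}^{k} n(k,r)\, p^{k-r} \;=\; \sum_{s=0}^{k} n(k, k-s)\, p^{s} \;=\; \sum_{s=0}^{k} n(k,s)\, p^{s},
\]
where the last step invokes the symmetry $n(k,r) = n(k,k-r)$ from Lemma \ref{lem:The-function-}. The second identity in that same lemma now identifies the final sum with $\mathcal{N}(k+1) - \mathcal{N}(k)$, yielding the claimed value of $U$.
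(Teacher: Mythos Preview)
Your proof is correct and follows essentially the same route as the paper: both parametrize maximal unlinked sets as cosets $V_W + a$ with $V_W = W^\perp \oplus W$, identify the admissibility condition on $a$ as $q(a)\in W^\perp$, count $p^{k-\dim W}$ cosets per $W$, and finish with the identity $\sum_r n(k,r)p^{k-r}=\sum_r n(k,r)p^{r}=\mathcal{N}(k+1)-\mathcal{N}(k)$. Your write-up is in fact slightly more explicit than the paper's in justifying maximality (via the size bound $p^k$) and in spelling out the reindexing via $n(k,r)=n(k,k-r)$.
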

\begin{proof}
By an argument similar to the proof of Lemma \ref{lem:Let--be-1}
we can show that if $\mathcal{U}$ is maximal unlinked and $\mathcal{U}=V+a'$
then $\mathcal{U}+a$ is maximal unlinked if and only if $q\left(a\right)\in p\left(V\right)^{\perp}$.
Hence given any $k$-dimensional subspace $V\subset\mathbb{F}_{p}^{2k}$
there are 
\[
p^{k}\left(p^{\dim p\left(V\right)^{\perp}}\right)
\]
 vectors which translate $V$ to a maximal unlinked set. However since
translating by $a_{1}$ and $a_{2}$ gives the same set if and only
if $a_{1}$ and $a_{2}$ are in the same coset of $V$ this implies
that there are $\left(p^{\dim p\left(V\right)^{\perp}}\right)$ distinct
maximal unlinked sets that can be obtained from $V$. 

Now let $S$ be the set of $k$-dimensional subspaces $V\subset\mathbb{F}_{p}^{2k}$
which satisfy Lemma \ref{lem:Let--be-1}. We compute the size of this
set. Fix some subspace $V_{0}\subset\mathbb{F}_{p}^{k}$ with $\dim V_{0}=r$
and suppose $V$ satisfies $p\left(V\right)=V_{0}$. So $\dim p\left(V\right)^{\perp}=k-r$.
We want $V\subset p\left(V\right)^{\perp}\oplus p\left(V\right)=V_{0}^{\perp}\oplus V_{0}$
but both sides have dimension $k$ and hence there is a unique $V$
with $p\left(V\right)=V_{0}$ and satisfying the condition of Lemma
\ref{lem:Let--be-1}. Hence the number of $V\in\left|S\right|$ with
$\dim p\left(V\right)=r$ is $n\left(k,r\right)$.

Thus we have
\begin{eqnarray*}
U & = & \sum_{V\in S}p^{\dim p\left(V\right)^{\perp}}\\
 & = & \sum_{r=0}^{k}p^{r}n\left(k,r\right)\\
 & = & \mathcal{N}\left(k+1\right)-\mathcal{N}\left(k\right)
\end{eqnarray*}
by Lemma \ref{lem:The-function-}.
\end{proof}
Thus we have computed 
\[
S_{k}\left(X\right)=p^{-k}\left(\mathcal{N}\left(k+1\right)-\mathcal{N}\left(k\right)\right)\sum_{n<X}\left(p-1\right)^{\omega\left(n\right)-1}+o\left(X\right)
\]
which proves Theorem \ref{thm: final s(x) k}. As remarked at the
beginning of the section it follows that 
\[
\lim_{X\longrightarrow\infty}\frac{\sum_{K.D_{K}<X}\left|\mathrm{im}\left(1-\sigma_{K}\right)\right|^{k}}{\sum_{K,D_{K}<X}1}=p^{-k}\left(\mathcal{N}\left(k+1\right)-\mathcal{N}\left(k\right)\right).
\]
By the results of \cite{fk2distribution} Theorems \ref{thm:gerth 3 rank}
and \ref{thm:gerth p ranks} follow immediately.

\bibliographystyle{plain}
\bibliography{bibliography}

\textsc{~}

\textsc{\small{}Department of Mathematics, University of Toronto,
Canada}{\small \par}

{\small{}$\mathtt{jack.klys@mail.utoronto.ca}$}{\small \par}
\end{document}